\def\deg{\operatorname{deg}}
\def\ac{{\overline{\rm ac}}}
\def\Supp{\operatorname{Supp}}
\def\Def{\operatorname{Def}}
\def\RDef{\operatorname{RDef}}
\def\LPas{\cL_{\rm DP}}
\def\11{{\mathbf 1}}
\def\AA{{\mathbb A}}
\def\CC{{\mathbb C}}
\def\FF{{\mathbb F}}
\def\LL{{\mathbb L}}
\def\NN{{\mathbb N}}
\def\QQ{{\mathbb Q}}
\def\ZZ{{\mathbb Z}}
\def\cC{{\mathcal C}}
\def\cF{{\mathcal F}}
\def\cL{{\mathcal L}}
\def\cM{{\mathcal M}}
\def\cO{{\mathcal O}}
\def\cP{{\mathcal P}}
\def\cU{{\mathcal U}}
\def\cV{{\mathcal V}}
\newcounter{dummy} \numberwithin{dummy}{section}
\newtheorem{thm}[dummy]{Theorem}
\newtheorem{lem}[dummy]{Lemma}
\newtheorem{cor}[dummy]{Corollary}
\newtheorem{prop}[dummy]{Proposition}
\newtheorem{claim}[dummy]{Claim}
\newtheorem{conj}[dummy]{Conjecture}
\newtheorem{maintheorem}[dummy]{Main Theorem}
\theoremstyle{definition}
\newtheorem{defn}[dummy]{Definition}
\newtheorem{def-prop}[dummy]{Proposition-Definition}
\newtheorem{def-theorem}[dummy]{Theorem-Definition}
\newtheorem{def-lem}[dummy]{Lemma-Definition}
\theoremstyle{remark}
\newtheorem{remark}[dummy]{Remark}
\theoremstyle{plain}
\numberwithin{equation}{section}
\DeclareMathOperator*{\Spec}{Spec}
\newcommand{\ord}{\operatorname{ord}}
\def\VF{\mathrm{VF}}
\def\VG{\mathrm{VG}}
\newcommand{\RF}{{\rm RF}}
\begin{document}

\author{Saskia Chambille}
\author{Kien Huu Nguyen}
\address{Saskia Chambille, KU Leuven, Department of Mathematics, Celestijnenlaan 200B, 3001 Heverlee, Belgium }
\email{saskia.chambille@kuleuven.be}
\address{Kien Huu Nguyen, Universit\'e Lille 1, Laboratoire Painlev\'e, CNRS - UMR 8524, Cit\'e Scientifique, 59655 Villeneuve d'Ascq Cedex, France}
\email{hkiensp@gmail.com}

\keywords{exponential sums uniform in $p$, Igusa local zeta functions, log-canonical threshold, motivic integration}

\subjclass[2010]{11L07; 03C98 11L05 11S40 11U09 12L12 14E18}

\thanks{The authors are supported by the European Research Council under the European Community's Seventh Framework Programme (FP7/2007-2013) with ERC Grant Agreement nr. 615722
MOTMELSUM, and thank the Labex CEMPI  (ANR-11-LABX-0007-01).}

\title[]
{Proof of Cluckers-Veys's conjecture on exponential sums for polynomials with log-canonical threshold at most a half}

\begin{abstract} In this paper, we will give two proofs of the Cluckers-Veys conjecture on exponential sums for the case of polynomials in $\ZZ[x_{1},\ldots,x_{n}]$ having log-canonical thresholds at most one half. In particular, these results imply Igusa's conjecture and Denef-Sperber's conjecture under the same restriction on the log-canonical threshold.
\end{abstract}

\maketitle


\section{Introduction}\label{00}

Let $n\geq 1$ be a natural number and let $f\in\ZZ[x_{1},\ldots,x_{n}]$ be a non-constant polynomial in $n$ variables, for which we assume that $f(0,\ldots,0)=0$. For homogeneous polynomials $f$, Igusa has formulated, on page 2 of \cite{13}, a conjecture on the exponential sum
\[
E_{m,p}(f):=\dfrac{1}{p^{mn}}\sum_{\overline{x}\in(\ZZ/p^{m}\ZZ)^{n}}\exp\Big(\frac{2\pi if(x)}{p^{m}}\Big),
\]
where $p$ is a prime number and $m\in\NN$. More precisely, he predicted that there exist a constant $\sigma$, which depends on the geometric properties of $f$, and a positive constant $C$, independent of $p$ and $m$, such that for all primes $p$ and for all $m\geq 1$,
\[
|E_{m,p}(f)|\leq Cm^{n-1}p^{-m\sigma}.
\]
In particular, his conjecture implies an ad\`elic Poisson summation formula.\\

A local version of this sum,
\[
E_{m,p}^{0}(f):=\dfrac{1}{p^{mn}}\sum_{\overline{x}\in(p\ZZ/p^{m}\ZZ)^{n}}\exp\Big(\frac{2\pi if(x)}{p^{m}}\Big),
\]
was considered by Denef and Sperber in \cite{11}. Under certain conditions on the Newton polyhedron $\Delta$ of $f$, they proved that there exist constants $\sigma, \kappa$, depending only on $\Delta$, and a positive constant $C$, independent of $p$ and $m$, such that for all $m \geq 1$ and almost all $p$, we have
\[
|E_{m,p}^{0}(f)|\leq Cm^{\kappa-1}p^{-m\sigma}.
\]
In \cite{00}, Cluckers proved both conjectures in the case that $f$ is non-degenerate.\\

To generalise these facts, Cluckers and Veys formulated, in \cite{06}, a conjecture related to the log-canonical threshold of an arbitrary polynomial $f$. We will recall the definition of the log-canonical threshold in the next section. They also introduced the following local exponential sum, for each $y\in\ZZ^{n}$:
\[
E_{m,p}^{y}(f):=\dfrac{1}{p^{mn}}\sum_{\overline{x}\in \overline{y}+(p\ZZ/p^{m}\ZZ)^{n}}\exp\Big(\frac{2\pi if(x)}{p^{m}}\Big).
\]
We restate their conjecture here.

\begin{conj}[Cluckers-Veys] There exists  a positive constant $C$ (that may depend on the polynomial $f$), such that for all primes $p$, for all $m\geq 2$ and for all $y\in\ZZ^{n}$, we have
\[
|E_{m,p}(f)|\leq Cm^{n-1}p^{-ma(f)}
\]
and
\[
|E_{m,p}^{y}(f)|\leq Cm^{n-1}p^{-ma_{y,p}(f)}.
\]
Here $a(f)$ is the minimum, over all $b\in\CC$, of the log-canonical thresholds of the polynomials $f(x)-b$. And, for $y\in\ZZ^{n}$, $a_{y,p}(f)$ is the minimum of the log-canonical thresholds at $y'$ of the polynomials $f(x)-f(y')$, where $y'$ runs over $y+p\ZZ_{p}^{n}$.
\end{conj}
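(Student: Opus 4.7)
The plan is to recast both exponential sums as Mellin-type combinations of twisted Igusa local zeta functions and then control the pole structure of these zeta functions via an embedded resolution of singularities, with uniformity in $p$ guaranteed by the Cluckers--Loeser framework of motivic integration. For fixed $m\geq 2$, a standard expansion writes $E_{m,p}(f)$ as a finite linear combination of specialisations at $s=-k/m$ of
\[
Z(\chi,s,f)=\int_{\ZZ_{p}^{n}}|f(x)|_{p}^{s}\chi(\ac(f(x)))\,dx,
\]
where $\chi$ runs over multiplicative characters of $\ZZ_{p}^{\times}$ of conductor dividing $p^{m}$. Hence the desired bound $|E_{m,p}(f)|\leq Cm^{n-1}p^{-ma(f)}$ reduces, uniformly in $p$ and $\chi$, to two claims about $Z(\chi,s,f)$: no pole lies strictly to the right of $-a(f)$, and the pole at $s=-a(f)$ (when it occurs) has order at most $n$.

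Both claims are natural consequences of an embedded resolution $h:Y\to\AA_{\ZZ}^{n}$ of $f$ together with all fibers $f=b$, $b\in\CC$, relevant to the definition of $a(f)$. If the irreducible components $E_{i}$ of $h^{-1}(f^{-1}(0))$ have numerical data $(N_{i},\nu_{i})$, Denef's formula reads, for almost all $p$,
\[
Z(\chi,s,f)\;=\;\sum_{I}c_{I,p}(\chi)\prod_{i\in I}\frac{(p-1)p^{-\nu_{i}-sN_{i}}}{1-p^{-\nu_{i}-sN_{i}}},
\]
where $I$ ranges over subsets such that $E_{I}^{\circ}=\bigcap_{i\in I}E_{i}\setminus\bigcup_{j\notin I}E_{j}$ is nonempty, and $c_{I,p}(\chi)$ counts $\FF_{p}$-points on a subvariety of $E_{I}^{\circ}$, vanishing whenever $\chi$ is ramified along some $E_{i}$ with $i\in I$. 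Since $a(f)=\min_{i}\nu_{i}/N_{i}$, no candidate pole lies to the right of $-a(f)$, and $|I|\leq n$ by a dimension count, bounding the order at $-a(f)$ by $n$.

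The local sums $E_{m,p}^{y}(f)$ are handled in parallel by working with the zeta function on $y+p\ZZ_{p}^{n}$ attached to $f(x)-f(y')$; taking the infimum over $y'\in y+p\ZZ_{p}^{n}$ of the resulting bounds produces precisely the exponent $a_{y,p}(f)$. The uniformity of $C$ in $p$ is the main reason to work motivically: the resolution data, the pole structure of $Z(\chi,s,f)$, and the relevant residue bounds are motivic invariants, so their specialisations yield estimates valid for all sufficiently large $p$, with finitely many small primes absorbed into $C$ by direct bookkeeping.

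The main obstacle, and where the heavy lifting will take place, is the fine analysis of the residue at $s=-a(f)$. Beyond merely bounding the pole order by $n$, one must rule out conspiracies among several components with $\nu_{i}/N_{i}=a(f)$ that could, after summation and character twisting, either inflate the effective pole order or produce implicit constants that blow up with $p$. The geometric meaning of the log-canonical threshold --- via dimensions of jet spaces of $f^{-1}(0)$ and motivic volumes of arc-space cylinders --- gives a direct, uniform-in-$p$ link between $a(f)$ and the residue at $s=-a(f)$, and I would expect this to suffice when the worst components are ``isolated'' in the resolution. The hardest case is clustering of many components at the critical value: here finer motivic-monodromy data on the exceptional divisors, or an entirely non-resolution-based approach such as one through log-canonical pairs, appears to be required to close the estimate uniformly for every $p$ and every $\chi$.
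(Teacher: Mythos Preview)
The statement you are addressing is a \emph{conjecture}, not a theorem: the paper does not prove it in full, but only the weaker bound with exponent $\sigma=\min\{a(f),\tfrac12\}$ (resp.\ $\sigma_{y,p}=\min\{a_{y,p}(f),\tfrac12\}$). So there is no ``paper's own proof'' to compare against; rather, the paper's arguments explain exactly why your sketch cannot close.

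Your plan follows the same skeleton the paper uses in Section~4: express $E_{m,p}$ via Proposition~\ref{04} as a trivial-character piece plus a sum over non-trivial characters, and control each via Denef's formula (Theorem~\ref{05}). The trivial-character part indeed yields $Cm^{n-1}p^{-ma(f)}$ uniformly in $p$, essentially as in Lemma~\ref{18}. The gap is in the non-trivial characters. There one must extract $\mathrm{Coeff}_{t^{m-1}}Z(p,\chi,s)$; the exponent constraint becomes $\sum_{i\in I}N_i(a_i+1)=m-1$, so the best one gets is $p^{-(m-1)a(f)}=p^{-ma(f)+a(f)}$, i.e.\ a genuine loss of $p^{a(f)}$. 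The only compensating factor is the Gauss sum, and $|g_{\chi^{-1}}|\asymp p^{-1/2}$. Hence the non-trivial block contributes at best $Cm^{n-1}p^{-ma(f)+a(f)-1/2}$, which beats $p^{-ma(f)}$ only when $a(f)\le\tfrac12$. This is precisely why the paper's Lemma~\ref{19} is stated with $\sigma=\min\{c_0(f),\tfrac12\}$ rather than $c_0(f)$. Your appeal to ``residue bounds are motivic invariants'' does not help here: the Lang--Weil count $|c_{I,\chi}^0|\le D_I p^{n-\#I}$ is already optimal in general, and there is no extra cancellation among characters or among components that the motivic framework supplies.

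Two smaller points. First, the relation between $E_{m,p}$ and $Z(\chi,s,f)$ is not ``specialisation at $s=-k/m$'' but extraction of the $t^{m-1}$ (resp.\ $t^{m-c(\chi)}$) coefficient with $t=p^{-s}$, weighted by Gauss sums; see Proposition~\ref{04}. Second, the identity $a(f)=\min_i \nu_i/N_i$ requires resolving all relevant fibres $f=b$, not just $f=0$; the paper handles this by partitioning $\ZZ_p^n$ according to proximity to each critical value (Lemma~\ref{20} and Section~\ref{sec: Main Theorem 1.2}), not by a single global resolution. Your final paragraph correctly flags that the argument is incomplete; the concrete obstruction is the $p^{a(f)-1/2}$ loss above, and removing it for $a(f)>\tfrac12$ is the open content of the conjecture.
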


In this article, we will prove a special case of the Cluckers-Veys conjecture. More concretely, we will prove the case in which the log-canonical threshold of $f$ is at most a half. We will consider in detail the local sum where $y=0$ and we will afterwards discuss how one can adapt the proofs to obtain uniform upper bounds for $|E^y_{m,p}(f)|$, for $y \in \ZZ^n$, and an upper bound for $|E_{m,p}(f)|$. Our main theorems will be the following.

\begin{maintheorem}\label{*}
Let $n\geq 1$ and let $f\in\ZZ[x_{1},\ldots,x_{n}]$ be a non-constant polynomial with $f(0)=0$. Put $\sigma=\min\big\{c_{0}(f),\frac{1}{2}\big\}$, where $c_0(f)$ is de log-canonical threshold of $f$ at $0$. Then there exists a positive constant $C$, not depending on $p$ and $m$, and a natural number $N$, such that for all $m\geq 1$ and all primes $p>N$, we have
\[
|E_{m,p}^{0}(f)|\leq Cm^{n-1}p^{-m\sigma}.
\]
\end{maintheorem}

\begin{maintheorem}\label{**}
Let $n\geq 1$ and let $f\in\ZZ[x_{1},\ldots,x_{n}]$ be a non-constant polynomial. Put $\sigma=\min\big\{a(f),\frac{1}{2}\big\}$. Then there exists a positive constant $C$, not depending on $p$ and $m$, and a natural number $N$, such that for all $m\geq 2$ and all primes $p>N$, we have
\[
|E_{m,p}(f)|\leq Cm^{n-1}p^{-m\sigma}.
\]
\end{maintheorem}

Remark that by \cite{11}, \cite{12} and \cite{13}, there exists, for each prime $p$, a positive constant $C_{p}$, such that 
\[
|E_{m,p}^{0}(f)|\leq C_{p}m^{n-1}p^{-mc_{0}(f)}
\]
and
\[
|E_{m,p}(f)|\leq C_{p}m^{n-1}p^{-ma(f)},
\]
for all $m\geq 1$. Therefore we know that once the Main Theorems \ref{*} and \ref{**} are proven, they will hold for $N = 1$, possibly after enlarging the constant $C$.

Notice that the homogeneous polynomials $f$ in two variables that are not yet covered by Igusa in \cite{13}, all satisfy that $a(f) \leq \frac{1}{2}$. Hence our results can be seen as a generalisation of a result of Lichtin from \cite{Lich} or of Wright from \cite{Wr}, in which they proved Igusa's conjecture for any homogeneous polynomial of two variables.
%
%

\begin{remark}\label{rem: m=1}
We observe that if $m=1$, then $|E_{1,p}^{0}(f)|=\frac{1}{p^{n}}$. Hence the Main Theorem \ref{*} is trivial for $m=1$ and we only need to prove it for $m\geq 2$.
\end{remark}

We will give two approaches to our main theorems and for the Main Theorem \ref{*} we will give the details of these approaches. The first approach, in Section \ref{sec: model theory}, will make use of model theory, an estimate of the dimension of arc spaces as in \cite{15}, the Cluckers-Loeser motivic integration theory and an estimate of Weil on finite field exponential sums in one variable (see \cite{Weil}). We will also use an idea which is close to the construction of the local Artin map by Lubin-Tate theory. More concretely, we will prove that certain functions do not depend on the choice of a uniformiser in $\QQ_p$, but only on the angular component of the chosen uniformiser. Hence, when varying uniformisers, we obtain orbits of points that have the same image under these functions. In fact, these orbits depend on actions of the group $\mu_{p-1}(\QQ_{p})$, the group of $(p-1)^{\text{th}}$ roots of unity of $\QQ_p$, on the set of uniformisers of $\QQ_{p}$ and on $\QQ_{p}$. The second approach, in Section \ref{sec: geometry}, will use a concrete expression of cohomology, as in \cite{08}. Both of these approaches will use not only Lang-Weil estimates (\cite{Lawe}) for the number of points on varieties over finite fields, but also the theory of Igusa's local zeta functions. In Section \ref{***} we will give some background on log-canonical thresholds, exponential sums and Igusa's local zeta functions. In Section \ref{sec: Main Theorem 1.2}, we will explain how the results from Section \ref{sec: geometry} can be used to prove the Main Theorem \ref{**}. We will end this paper by explaining, in Section \ref{sec: uniform version}, how to obtain uniform upper bounds for all local sums $E^y_{m,p}$. We will do this both from the geometric, as well as from the model theoretic point of view.

We remark that our results can be extended to the ring of integers $\cO_K$ of any number field $K$, but we only work with $\ZZ$ and $\QQ$ to simplify notation.

\section{Log-canonical Thresholds and exponential sums}\label{***}
\subsection{Log-canonical Threshold}
In this section we will recall two possible definitions of the log-canonical threshold of a polynomial $f$.

\begin{defn}\label{01}
Let $f$ be a non-constant polynomial in $n$ variables over an algebraically closed field $K$ of characteristic zero. Let $\pi: Y \rightarrow K^{n}$ be a proper birational morphism on a smooth variety $Y$. For any prime divisor $E$ on $Y$, we denote by $N$ and $\nu-1$ the multiplicities along $E$ of the divisors of $\pi^{*}f$ and $\pi^{*}(dx_{1}\wedge\ldots\wedge dx_{n})$, respectively. For each $x\in Z(f)\subset K^{n}$, the \emph{log-canonical threshold of $f$ at $x$}, denoted by $c_{x}(f)$, is the real number $\inf_{\pi,E}\big\{\frac{\nu}{N}\big\}$, where $\pi$ runs over all proper birational morphisms to $K^n$ and $E$ runs over all prime divisors on $Y$ such that $x \in \pi(E)$. If we fix any embedded resolution $\pi$ of the germ of $f=0$ at $x$, then
\[
c_{x}(f)=\min_{E: x\in\pi(E)}\Big\{\frac{\nu}{N}\Big\}.
\]
Furthermore we always have $c_{x}(f)\leq 1$. We denote by $c(f)=\inf_{x\in Z(f)}c_{x}(f)$ the \emph{log-canonical threshold of $f$}.
\end{defn}

By the following theorem from \cite{15}, which is true for any algebraically closed field $K$ of characteristic zero, there exists a description of the log-canonical threshold in terms of arc spaces and jet spaces.

\begin{thm}[\cite{15}, Corollaries 0.2 and 3.6]\label{02}
Let $f$ be a non-constant polynomial over $K$ in $n$ variables and let $m$ be a natural number. We set
\[
\textnormal{Cont}^{\geq m}(f):=\{x\in K[[t]]^{n}\mid f(x)\equiv 0\textnormal{ mod }t^{m}\}
\]
and 
\[
\textnormal{Cont}_{0}^{\geq m}(f):=\{x\in (tK[[t]])^{n}\mid f(x)\equiv 0\textnormal{ mod }t^{m}\}.
\]
We denote by $\pi_{m}$ the projection from $K[[t]]^{n}$ to $(K[t]/(t^{m}))^{n}$ and we consider the codimensions of $\pi_{m}(\textnormal{Cont}^{\geq m}(f))$ and $\pi_{m}(\textnormal{Cont}_{0}^{\geq m}(f))$ in $(K[t]/(t^{m}))^{n}\cong K^{nm}$. We denote these two values by $\textnormal{codim }\textnormal{Cont}^{\geq m}(f)$ and $\textnormal{codim }\textnormal{Cont}_{0}^{\geq m}(f)$, respectively.
Then the log-canonical threshold of $f$ equals the real number
\[
c(f)=\inf_{m\geq 1}\frac{\textnormal{codim }\textnormal{Cont}^{\geq m}(f)}{m},
\]
and if $f(0)=0$, then the log-canonical threshold of $f$ at 0 equals the real number
\[
c_{0}(f)=\inf_{m\geq 1}\frac{\textnormal{codim }\textnormal{Cont}_{0}^{\geq m}(f)}{m}.
\]
\end{thm}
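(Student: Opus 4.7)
The plan is to reduce both identities to a computation on a fixed log resolution of $f$, using the stratification of the jet space by orders of contact with the components of the exceptional divisor, together with the change of variables formula for jet schemes. To begin, I would fix a log resolution $\pi\colon Y\to K^n$ of $f$ (respectively, of the germ of $f$ at $0$ for the local statement), so that $\pi^{*}f=\sum_i N_i E_i$ and the Jacobian divisor equals $\sum_i(\nu_i-1)E_i$, with the $E_i$ forming a simple normal crossings divisor. By Definition~\ref{01} this already gives $c(f)=\min_i\nu_i/N_i$ and $c_0(f)=\min\{\nu_i/N_i:0\in\pi(E_i)\}$, so the task reduces to expressing the two codimensions appearing in the theorem in terms of these numerical data.

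For each tuple $\underline{m}=(m_i)\in\NN^r$, I would introduce the multi-contact locus $\textnormal{Cont}^{\underline{m}}(E)\subset Y[[t]]$ of arcs meeting $E_i$ with multiplicity exactly $m_i$. Using smooth normal crossings coordinates around each point of $E$, this locus admits a direct description, and the Kontsevich--Denef--Loeser change of variables formula implies that, once $m$ is sufficiently large compared to $\sum_i m_iN_i$, the truncated image $\pi_m\bigl(\pi(\textnormal{Cont}^{\underline{m}}(E))\bigr)$ is constructible of codimension exactly $\sum_i m_i\nu_i$ in $(K[t]/(t^m))^n$. An arc in this stratum lies in $\textnormal{Cont}^{\geq m}(f)$ if and only if $\sum_i m_iN_i\geq m$, and every arc in $\textnormal{Cont}^{\geq m}(f)$ belongs, away from a lower dimensional exceptional set, to some such stratum. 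Combining these observations yields
\[
\textnormal{codim }\textnormal{Cont}^{\geq m}(f)=\min\Bigl\{\sum_i m_i\nu_i\;\Big|\;\sum_i m_iN_i\geq m,\ m_i\in\NN\Bigr\}.
\]
Dividing by $m$, the inequality $\geq\min_i\nu_i/N_i$ is immediate from $\sum_i m_i\nu_i\geq(\min_i\nu_i/N_i)\sum_i m_iN_i$, and equality is attained along the sequence $m=kN_{i_0}$, $\underline{m}=ke_{i_0}$, where $i_0$ realises the minimum. For the local statement, any arc in $(tK[[t]])^n$ lifts to an arc $\gamma\in Y[[t]]$ with $\gamma(0)\in\pi^{-1}(0)$, so only strata indexed by $i$ with $0\in\pi(E_i)$ contribute, and the same optimisation recovers $c_0(f)$.

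The main obstacle I would anticipate is a clean justification of the codimension computation for the strata: one has to verify that $\pi_m$ collapses the image of $\textnormal{Cont}^{\underline{m}}(E)$ in a controlled way, producing a set of codimension exactly $\sum_i m_i\nu_i$ and not something smaller, and separately check that truncating at finite level does not destroy this count. This is precisely where the hypothesis that $m$ is large compared to $\sum_i m_iN_i$ enters, together with the Jacobian contribution from the change of variables formula and Greenberg-type surjectivity statements for jet schemes of smooth varieties. Once this codimension count is secured, the remainder of the argument is a straightforward linear programming problem over nonnegative integer lattice points constrained by $(N_i,\nu_i)$, and it produces both formulas simultaneously.
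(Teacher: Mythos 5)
This theorem is quoted from Musta\c{t}\v{a} \cite{15} and is not proved in the paper, so there is no internal argument to compare against. Your plan is a faithful reconstruction of the standard jet-scheme proof (Musta\c{t}\v{a}, Ein--Musta\c{t}\v{a}--Yasuda): stratify the contact loci by multi-contact data on a log resolution, use the change-of-variables estimate to identify each stratum codimension with $\sum_i m_i\nu_i$, and reduce the infimum to the optimisation $\min_i\nu_i/N_i$. One spot worth tightening is the local statement. If the divisor $E_{i_0}$ realising $\min\{\nu_i/N_i : 0\in\pi(E_i)\}$ has $\pi(E_{i_0})$ of positive dimension (e.g.\ a strict transform of a branch of $Z(f)$ through $0$), then requiring the lifted arc to start on $\pi^{-1}(0)\cap E_{i_0}$ imposes a bounded \emph{extra} constraint beyond the contact conditions, so $\textnormal{codim}\,\textnormal{Cont}_0^{\geq kN_{i_0}}(f)$ is $k\nu_{i_0}+O(1)$ rather than exactly $k\nu_{i_0}$; already for $f=x_1$ in $K^2$ one has $\textnormal{codim}\,\textnormal{Cont}_0^{\geq m}(f)=m+1$, not $m$. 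Your phrase ``equality is attained'' should therefore be weakened to ``the ratio tends to $\nu_{i_0}/N_{i_0}$ as $k\to\infty$''. The lower bound $\textnormal{codim}/m\geq\min_i\nu_i/N_i$ is unaffected, so the infimum formula for $c_0(f)$ still follows.
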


\subsection{Exponential sum and Igusa local zeta function}\label{03}
In this section we will discuss formulas for the exponential sums $E_{m,p}(f)$ and $E_{m,p}^0(f)$. These formulas can be found in the works of Igusa and Denef on Igusa local zeta functions. Most of the theory in this section comes from \cite{09}. We will just introduce the necessary notation here.\\

Let $K$ be a number field, $\cO$ the ring of algebraic integers of $K$ and $\mathfrak{p}$ any maximal ideal of $\cO$. We denote the completions of $K$ and $\cO$ with respect to $\mathfrak{p}$ by $K_{\mathfrak{p}}$ and $\cO_{\mathfrak{p}}$. Let $q=p^{m}$ be the cardinality of the residue field $k_{\mathfrak{p}}$ of the local ring $\cO_{\mathfrak{p}}$, then $k_{\mathfrak{p}}=\FF_{q}$. For $x\in K_{\mathfrak{p}}$, we denote by $\ord(x)\in\ZZ\cup\{+\infty\}$ the $\mathfrak{p}$-valuation of $x$, $|x|=q^{-\ord(x)}$ and $\text{ac}(x)=x\pi^{-\ord(x)}$, where 
$\pi \in \cO_{\mathfrak{p}}$ is a fixed uniformising parameter for $\mathcal{O}_{\mathfrak{p}}$.

Let $\chi:\cO_{\mathfrak{p}}^{\times}\rightarrow\CC^{\times}$ be a character on the group of units $\cO_{\mathfrak{p}}^{\times}$ of $\cO_{\mathfrak{p}}$, with finite image. By the \emph{order} of such a character we mean the number of elements in its image. The \emph{conductor} $c(\chi)$ of the character is the smallest $c\geq 1$ for which $\chi$ is trivial on $1+\mathfrak{p}^c$. We formally put $\chi(0)=0$. Let $f(x)\in K[x]$ be a polynomial in $n$ variables, $x=(x_{1},\ldots,x_{n})$, with $f\neq 0$, and let $\Phi:K_{\mathfrak{p}}^{n}\rightarrow\CC$ be a \emph{Schwartz-Bruhat function}, i.e., a locally constant function with compact support. We say that $\Phi$ is {\it residual} if $\text{Supp}(\Phi)\subset\cO_{\mathfrak{p}}^{n}$ and $\Phi(x)$ only depends on $x$ mod $\mathfrak{p}$. Thus if $\Phi$ is residual, it induces a function $\overline{\Phi}:k_{\mathfrak{p}}^{n}\rightarrow\CC$. Now we associate to these data \emph{Igusa's local zeta function}
\[
Z_{\Phi}(K_{\mathfrak{p}},\chi,s,f):=\int_{K_{\mathfrak{p}}^{n}}\Phi(x)\chi\big(\text{ac}(f(x))\big)|f(x)|^{s}|dx|.
\]
In \cite{13}, Igusa showed that $Z_{\Phi}(K_{\mathfrak{p}},\chi,s,f)$ is a rational function in $t=q^{-s}$. From now on we will write $Z_{\Phi}(\mathfrak{p},\chi,s,f)$, whenever we have fixed $K$. \\

Let $\Psi$ be the standard additive character on $K_\mathfrak{p}$, i.e. for $z\in K_{\mathfrak{p}}$,
\[
\Psi(z):=\exp(2\pi i\mbox{Tr}_{K_{\mathfrak{p}}/\QQ_{p}}(z)),
\]
where $\mbox{Tr}_{K_{\mathfrak{p}}/\QQ_{p}}$ denotes the trace map. We set
\[
E_{\Phi}(z,\mathfrak{p},f):=\int_{K_{\mathfrak{p}}^{n}}\Phi(x)\Psi(zf(x))|dx|.
\]
Whenever $\Phi=\11_{\cO_{\mathfrak{p}}^n}$ or $\Phi=\11_{(\mathfrak{p}\cO_{\mathfrak{p}})^n}$ and $K$ is fixed, we will simply denote this function by $E_{\mathfrak{p}}(z,f)$ or  $E^{0}_{\mathfrak{p}}(z,f)$, respectively. When $K=\QQ$, $\mathfrak{p}=p\ZZ$, $z=p^{-m}$ and $\Phi=\11_{\ZZ_{p}^n}$ or $\Phi=\11_{(p\ZZ_{p})^n}$ we will simplify notation even more by writing $E_{m,p}(f)$ or $E_{m,p}^{0}(f)$, respectively, and this notation coincide with the notation in Section \ref{00} by an easy calculation.\\

We recall the following proposition from \cite{09}, that relates the exponential sums to Igusa's local zeta functions.

\begin{prop}[\cite{09}, Proposition 1.4.4]\label{04}
Let $u\in\cO_{\mathfrak{p}}^{\times}$ and $m\in\ZZ$. Then $E_{\Phi}(u\pi^{-m},\mathfrak{p},f)$ is equal to

\begin{align*}
Z_{\Phi}(\mathfrak{p},\chi_\textnormal{triv},0,f)&+\mbox{\textnormal{Coeff}}_{t^{m-1}}\Big(\dfrac{(t-q)Z_{\Phi}(\mathfrak{p},\chi_\textnormal{triv},s,f)}{(q-1)(1-t)}\Big)\\
&+\sum_{\chi\neq\chi_\textnormal{triv}}g_{\chi^{-1}}\chi(u)\mbox{\textnormal{Coeff}}_{t^{m-c(\chi)}}\big(Z_{\Phi}(\mathfrak{p},\chi,s,f)\big),
\end{align*}

where $g_{\chi}$ is the Gaussian sum
\[
g_{\chi}=\frac{q^{1-c(\chi)}}{q-1}\sum_{\overline{v}\in(\cO_{\mathfrak{p}}/\mathfrak{p}^{c(\chi)})^{\times}}\chi(v)\Psi(v/\pi^{c(\chi)}).
\]
\end{prop}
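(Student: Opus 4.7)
The approach is to split the integral defining $E_\Phi(u\pi^{-m},\mathfrak{p},f)$ according to $k:=\ord(f(x))$ and expand the additive character $\Psi$ on each level set in terms of multiplicative characters of $\cO_\mathfrak{p}^\times$. Since $f\neq 0$ the set $\{f=0\}$ has Haar measure zero, so
\[
E_\Phi(u\pi^{-m},\mathfrak{p},f) = \sum_{k=0}^{\infty} I_k, \qquad I_k := \int_{\{\ord f(x)=k\}}\Phi(x)\Psi\bigl(u\pi^{k-m}\text{ac}(f(x))\bigr)|dx|.
\]
Writing $a_k(\chi) := \int_{\{\ord f=k\}}\Phi(x)\chi(\text{ac} f(x))|dx|$ so that $Z_\Phi(\mathfrak{p},\chi,s,f)=\sum_{k\geq 0}a_k(\chi)t^k$ with $t=q^{-s}$, the plan is to express each $I_k$ in terms of these Taylor coefficients.

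For $k\geq m$ the argument of $\Psi$ lies in $\cO_\mathfrak{p}$, so $\Psi=1$ and $I_k=a_k(\chi_\textnormal{triv})$. For $k<m$, set $j=m-k\geq 1$; the function $v\mapsto\Psi(uv/\pi^j)$ on $\cO_\mathfrak{p}^\times$ factors through $\cO_\mathfrak{p}^\times/(1+\mathfrak{p}^j)$ and therefore admits a finite Fourier expansion $\sum_{c(\chi)\leq j}c_\chi\chi(v)$, with coefficients $c_\chi = (1-q^{-1})^{-1}\int_{\cO_\mathfrak{p}^\times}\Psi(uv/\pi^j)\chi^{-1}(v)|dv|$ by orthogonality. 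I will evaluate these in three cases: (i) for non-trivial $\chi$ with $c(\chi)<j$, writing $v=v_0(1+\pi^{c(\chi)}v_1)$ and summing first over $v_1\in\cO_\mathfrak{p}/\mathfrak{p}^{j-c(\chi)}$ annihilates the integral; (ii) for $c(\chi)=j$, the substitution $v\mapsto u^{-1}v$ identifies $c_\chi$ with $\chi(u)g_{\chi^{-1}}$ via the definition of the Gaussian sum; (iii) for the trivial character, splitting $\cO_\mathfrak{p}^\times = \cO_\mathfrak{p}\setminus\mathfrak{p}\cO_\mathfrak{p}$ and computing directly gives $c_{\chi_\textnormal{triv}} = -1/(q-1)$ if $j=1$ and $0$ if $j\geq 2$.

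Summing the resulting expressions over $k$ yields
\[
E_\Phi(u\pi^{-m},\mathfrak{p},f) = \sum_{k\geq m}a_k(\chi_\textnormal{triv}) - \tfrac{1}{q-1}a_{m-1}(\chi_\textnormal{triv}) + \sum_{\chi\neq\chi_\textnormal{triv}}\chi(u)g_{\chi^{-1}}a_{m-c(\chi)}(\chi),
\]
a finite sum since $a_{m-c(\chi)}(\chi)=0$ when $c(\chi)>m$. To match the stated form, I apply the partial-fraction identity $\frac{t-q}{(q-1)(1-t)}=-\frac{1}{q-1}-\frac{1}{1-t}$, which gives
\[
\operatorname{Coeff}_{t^{m-1}}\!\left(\frac{(t-q)Z_\Phi(\mathfrak{p},\chi_\textnormal{triv},s,f)}{(q-1)(1-t)}\right) = -\tfrac{1}{q-1}a_{m-1}(\chi_\textnormal{triv}) - \sum_{k=0}^{m-1}a_k(\chi_\textnormal{triv}),
\]
and combining with $Z_\Phi(\mathfrak{p},\chi_\textnormal{triv},0,f)=\sum_{k\geq 0}a_k(\chi_\textnormal{triv})$ exactly reproduces the first two terms of the statement. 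The only delicate point is the anomalous $-1/(q-1)$ coefficient for the trivial character at $j=1$, which is precisely what is encoded by the factor $(q-1)(1-t)$ in the denominator of the stated formula; once this edge case and the Gaussian-sum identification are in place, the remaining manipulations are straightforward.
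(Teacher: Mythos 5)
The paper does not prove this proposition; it is quoted verbatim from Denef's report \cite{09} (Proposition 1.4.4), so there is no in-paper proof to compare against. Your argument is correct and is essentially the standard one: decompose the integral along the level sets $\{\ord f(x)=k\}$, Fourier-expand the additive character $v\mapsto\Psi(uv\pi^{-j})$ on $\cO_{\mathfrak p}^\times/(1+\mathfrak p^j)$ against multiplicative characters of conductor $\le j$, verify by the standard vanishing/Gauss-sum computations that only the characters of conductor exactly $j$ survive (with $c_\chi=\chi(u)g_{\chi^{-1}}$), handle the trivial character separately (giving $-1/(q-1)$ only at $j=1$), and reassemble using the partial-fraction identity to recover the $\textnormal{Coeff}_{t^{m-1}}$ term. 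All three coefficient computations check out, and the final bookkeeping with $Z_\Phi(\mathfrak p,\chi_{\textnormal{triv}},0,f)=\sum_{k\ge 0}a_k(\chi_{\textnormal{triv}})$ is correct.
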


Now we will describe a formula for Igusa's local zeta function using resolution of singularities

Let $K$ and $f$ be as above. Put $X=\Spec K[x]$ and $D=\Spec K[x]/(f)$. We take an embedded resolution $(Y,h)$ for $f^{-1}(0)$ over $K$. This means that $Y$ is an integral smooth closed subscheme of projective space over $X$, $h:Y\rightarrow X$ is the natural map, the restriction $h:Y\backslash h^{-1}(D)\rightarrow X\backslash D$ is an isomorphism, and $(h^{-1}(D))_{\text{red}}$ has only normal crossings as subscheme of $Y$. Let $E_{i}, i\in T$, be the irreducible components of $(h^{-1}(D))_{\text{red}}$. For each $i\in T$, let $N_{i}$ be the multiplicity of $E_{i}$ in the divisor of $f\circ h$ on $Y$ and let $\nu_{i}-1$ be the multiplicity of $E_{i}$ in the divisor of $h^{*}(dx_{1}\wedge\ldots\wedge dx_{n})$. The $(N_{i},\nu_{i})_{i \in T}$ are called the \emph{numerical data} of the resolution. For each subset $I\subset T$, we consider the schemes
\[
E_{I}:=\cap_{i\in I} E_{i} \qquad \text{and} \qquad \overset{\circ}{E_{I}}:=E_{I}\backslash\cup_{j\in T\backslash I}E_{j}.
\]
In particular, when $I=\emptyset$ we have $E_{\emptyset}=Y$. We denote the critical locus of $f$ by $C_{f}$.\\

If $Z$ is a closed subscheme of $Y$, we denote the reduction mod $\mathfrak{p}$ of $Z$ by $\overline{Z}$ (see \cite{Shim}). We say that the resolution $(Y,h)$ of $f$ has \emph{good reduction modulo $\mathfrak{p}$} if $\overline{Y}$ and all $\overline{E}_{i}$ are smooth, $\cup_{i\in T}\overline{E}_{i}$ has only normal crossings, and the schemes $\overline{E}_{i}$ and $\overline{E}_{j}$ have no common components whenever $i\neq j$. There exists a finite subset $S$ of $\Spec\cO$, such that for all $\mathfrak{p}\notin S$, we have $f\in\cO_{\mathfrak{p}}[x]$, $f\not\equiv 0\mbox{ mod }\mathfrak{p}$ and the resolution $(Y,h)$ for $f$ has good reduction mod $\mathfrak{p}$ (see \cite{Den}, Theorem 2.4).

Let $\mathfrak{p}\notin S$ and $I\subset T$, then it is easy to prove that $\overline{E}_{I}=\cap_{i\in I}\overline{E}_{i}$. We put $\overset{\circ}{\overline{E}}_{I}:=\overline{E}_{I}\backslash\cup_{j\notin I}\overline{E}_{j}$. Let $a$ be a closed point of $\overline{Y}$ and $T_{a}=\{i\in T| a\in \overline{E}_{i}\}$. In the local ring of $\overline{Y}$ at $a$ we can write 
\[
\overline{f}\circ\overline{h}=\overline{u}\prod_{i\in T_{a}}\overline{g}_{i}^{N_{i}},
\]
where $\overline{u}$ is a unit, $(\overline{g}_{i})_{i\in T_{a}}$ is a part of a regular system of parameters and $N_{i}$ is as above.\\

In two cases, depending on the conductor $c(\chi)$ of the character $\chi$, we will give a more explicit description of Igusa's zeta function $Z_\Phi(\mathfrak{p}, \chi, s, f)$. In the first case we consider a character $\chi$ on $\cO_{\mathfrak{p}}^{\times}$ of order $d$, which is trivial on $1+\mathfrak{p}\cO_{\mathfrak{p}}$, i.e., $c(\chi) = 1$. Then $\chi$ induces a character (denoted also by $\chi$) on $k_{\mathfrak{p}}^{\times}$. We define a map
\[
\Omega_{\chi}:\overline{Y}(k_{\mathfrak{p}})\rightarrow\CC
\]
as follows. Let $a\in\overline{Y}(k_{\mathfrak{p}})$. If $d|N_{i}$ for all $i\in T_{a}$, then we put $\Omega_{\chi}(a)=\chi(\overline{u}(a))$, otherwise we put $\Omega_{\chi}(a)=0$. This definition is independent of the choice of $\overline{g}_{i}$. In the following theorem we recall the formula of Igusa's local zeta function.

\begin{thm}[\cite{08}, Theorem 2.2 or \cite{09}, Theorem 3.4]\label{05}
Let $\chi$ be a character on $\cO_{\mathfrak{p}}^{\times}$ of order $d$, which is trivial on $1+\mathfrak{p}\cO_{\mathfrak{p}}$. Supose that $\mathfrak{p} \notin S$ and that $\Phi$ is residual, then we have
\[
Z_{\Phi}(\mathfrak{p},\chi,s,f)=q^{-n}\sum_{\substack{I\subset T,\\ \forall i\in I: d\mid N_{i}}}c_{I,\Phi,\chi}\prod_{i\in I}\frac{(q-1)q^{-N_{i}s-\nu_{i}}}{1-q^{-N_{i}s-\nu_{i}}},
\]
where 
\[
c_{I,\Phi,\chi}=\underset{a\in\overset{\circ}{\overline{E}}_{I}(k_{\mathfrak{p}})}{\sum}\overline{\Phi}(\overline{h}(a))\Omega_{\chi}(a).
\]  
\end{thm}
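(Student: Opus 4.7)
The plan is to compute $Z_\Phi(\mathfrak{p},\chi,s,f)$ by pulling back along the embedded resolution $h:Y\to X$, stratifying $Y(\cO_{\mathfrak{p}})$ by reduction modulo $\mathfrak{p}$, and evaluating each fiber integral in analytic normal-crossings coordinates. First I would exploit that $h$ is proper birational and an isomorphism outside the measure-zero locus $h^{-1}(D)$, so that the substitution $x=h(y)$, together with the support condition on the residual $\Phi$, yields
\[
Z_{\Phi}(\mathfrak{p},\chi,s,f)=\int_{Y(\cO_{\mathfrak{p}})}\Phi(h(y))\,\chi(\ac(f\circ h(y)))\,|f\circ h(y)|^{s}\,|h^{*}\omega|,
\]
with $\omega=dx_{1}\wedge\cdots\wedge dx_{n}$. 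I would then partition $Y(\cO_{\mathfrak{p}})$ via the reduction map onto $\overline{Y}(k_{\mathfrak{p}})=\bigsqcup_{I\subset T}\overset{\circ}{\overline{E}}_{I}(k_{\mathfrak{p}})$, so the problem reduces to computing the fiber integral above each closed point $a\in\overset{\circ}{\overline{E}}_{I}(k_{\mathfrak{p}})$ (so $I=T_{a}$).

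The good-reduction hypothesis $\mathfrak{p}\notin S$, together with Hensel's lemma, lifts a regular system of parameters extending $(\overline{g}_{i})_{i\in I}$ at $a$ to analytic coordinates $y=(y_{1},\ldots,y_{n})$ on the fiber $(\mathfrak{p}\cO_{\mathfrak{p}})^{n}$ that put the pullbacks into monomial form
\[
f\circ h=u\prod_{i\in I}y_{i}^{N_{i}},\qquad h^{*}\omega=v\prod_{i\in I}y_{i}^{\nu_{i}-1}\,dy_{1}\wedge\cdots\wedge dy_{n},
\]
with $u,v\in\cO_{\mathfrak{p}}^{\times}$. Because $\Phi$ is residual and $\chi$ has conductor one, both $\Phi(h(y))$ and $\chi(\ac(u))$ are constant on the fiber, equal respectively to $\overline{\Phi}(\overline{h}(a))$ and $\chi(\overline{u}(a))$, so the fiber integral factorizes across the $y_{i}$.

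For each $i\notin I$ a bare integration gives $\int_{\mathfrak{p}\cO_{\mathfrak{p}}}|dy_{i}|=q^{-1}$. For $i\in I$ I would split $\int_{\mathfrak{p}\cO_{\mathfrak{p}}}\chi(\ac(y_{i}))^{N_{i}}|y_{i}|^{N_{i}s+\nu_{i}-1}|dy_{i}|$ according to $\ord(y_{i})=k\geq 1$ and use that $\sum_{\overline{v}\in k_{\mathfrak{p}}^{\times}}\chi(\overline{v})^{N_{i}}$ equals $q-1$ if $d\mid N_{i}$ and vanishes otherwise, which makes the integral equal to $(q-1)q^{-1}q^{-N_{i}s-\nu_{i}}/(1-q^{-N_{i}s-\nu_{i}})$ when $d\mid N_{i}$ and to $0$ when $d\nmid N_{i}$.

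Combining $q^{-(n-|I|)}$ from the coordinates outside $I$ with the $q^{-|I|}$ extracted from the monomial factors reassembles to the prefactor $q^{-n}\prod_{i\in I}(q-1)q^{-N_{i}s-\nu_{i}}/(1-q^{-N_{i}s-\nu_{i}})$, while summing $\overline{\Phi}(\overline{h}(a))\Omega_{\chi}(a)$ over $a\in\overset{\circ}{\overline{E}}_{I}(k_{\mathfrak{p}})$ recovers $c_{I,\Phi,\chi}$; the strata containing an $i$ with $d\nmid N_{i}$ drop out automatically. I expect the main technical obstacle to be producing the monomial analytic chart $y$ globally on the fiber and verifying that the reductions $\overline{u}(a)$ and $\overline{h}(a)$ are independent of the chart used, so that $\Omega_\chi(a)$ and the definition of $c_{I,\Phi,\chi}$ are unambiguous; both points follow from the good-reduction assumption via Hensel's lemma, and once they are in place the rest is the bookkeeping above.
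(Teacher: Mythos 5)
The paper does not prove Theorem \ref{05} itself — it is quoted from Denef (\cite{08}, Theorem 2.2; \cite{09}, Theorem 3.4) — and your proposal is a correct reconstruction of exactly the argument those references use: change of variables along the resolution, stratification of $Y(\cO_{\mathfrak{p}})$ by reduction modulo $\mathfrak{p}$ onto $\bigsqcup_{I}\overset{\circ}{\overline{E}}_{I}(k_{\mathfrak{p}})$, Hensel's lemma plus good reduction to obtain monomial coordinates on each fiber $\cong(\mathfrak{p}\cO_{\mathfrak{p}})^{n}$, and the elementary one-variable computation $\int_{\mathfrak{p}\cO_{\mathfrak{p}}}\chi(\ac(y_{i}))^{N_{i}}|y_{i}|^{N_{i}s+\nu_{i}-1}|dy_{i}|=(q-1)q^{-1}q^{-N_{i}s-\nu_{i}}/(1-q^{-N_{i}s-\nu_{i}})$ when $d\mid N_{i}$ and $0$ otherwise, the constancy of $\overline{\Phi}(\overline{h}(\cdot))$ and $\chi(\overline{u}(\cdot))$ on each fiber following from residuality of $\Phi$ and $c(\chi)=1$. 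Your bookkeeping of the powers of $q$ and the final assembly into $q^{-n}\sum_{I}c_{I,\Phi,\chi}\prod_{i\in I}\cdots$ are correct, and you rightly identify the two points needing care (existence of the monomial chart on the full fiber, and well-definedness of $\Omega_{\chi}$ under change of the $\overline{g}_{i}$, which follows because $\chi^{N_{i}}$ is trivial when $d\mid N_{i}$); so this is essentially the same proof as in the cited sources.
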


If $\Phi=\11_{\cO_{\mathfrak{p}}^n}$ or $\Phi=\11_{(\mathfrak{p}\cO_{\mathfrak{p}})^n}$ we will denote $c_{I,\Phi,\chi}$ by $c_{I,\chi}$ or $c_{I,\chi}^{0}$, respectively.

We note that $c_{I,\Phi,\chi}=0$, if there exists $i\in I$, such that $d\nmid N_{i}$. Therefore the number of characters  $\chi$, for which $c(\chi)=1$ and $c_{I,\Phi,\chi}\neq 0$ for some $I\subset T$, will have an upper bound $M$, which will only depend on the numerical data of $(Y,h)$, hence does not depend on $\text{char}(k_{\mathfrak{p}})$.\\

Now in the second case we consider a character $\chi$ on $\cO_{\mathfrak{p}}^{\times}$, which is non-trivial on $1+\mathfrak{p}\cO_{\mathfrak{p}}$, i.e.\ $c(\chi) >1$. Then we have the following theorem by Denef.

\begin{thm}[\cite{08}, Theorem 2.1 or \cite{09}, Theorem 3.3]\label{06}
Let $\chi$ be a character on $\cO_{\mathfrak{p}}^{\times}$, which is non-trivial on $1+\mathfrak{p}\cO_{\mathfrak{p}}$. Suppose that $\Phi$ is residual, $\mathfrak{p}\notin S$, $N_{i}\notin\mathfrak{p}$ for all $i\in T$, and $C_{\overline{f}}\cap\Supp(\overline{\Phi})\subset\overline{f}^{-1}(0)$. Then $Z_{\Phi}(\mathfrak{p},\chi,s,f)=0$
\end{thm}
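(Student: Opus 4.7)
The plan is to use the resolution $(Y,h)$ to pull back $Z_{\Phi}(\mathfrak{p},\chi,s,f)$ to an integral over $Y(\cO_{\mathfrak{p}})$ and then, on each residue disk, exhibit a measure-preserving change of variables under which the integrand acquires a non-trivial character factor. Since the integral is invariant while the integrand is scaled by a non-trivial root of unity, this forces it to vanish. Pulling back along $h$ is legitimate because $\Supp\Phi\subset\cO_{\mathfrak{p}}^{n}$ and $h$ is proper, so the preimage lies in $Y(\cO_{\mathfrak{p}})$. I would then partition $Y(\cO_{\mathfrak{p}})=\bigsqcup_{a\in\overline{Y}(k_{\mathfrak{p}})}B_{a}$ into residue disks $B_{a}=\{y\in Y(\cO_{\mathfrak{p}}):y\bmod\mathfrak{p}=a\}$ and reduce the statement to showing that each contribution $I_{a}$ equals zero, splitting into cases according to whether $T_{a}$ is empty.

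When $T_{a}\neq\emptyset$, fix $i_{0}\in T_{a}$ and choose local coordinates $y_{1},\ldots,y_{n}$ at a lift $\tilde{a}$ of $a$ with $y_{i}=g_{i}$ for $i\in T_{a}$; then $f\circ h=u\prod_{i\in T_{a}}y_{i}^{N_{i}}$ with $u$ a unit. I would apply the multiplicative substitution $y_{i_{0}}\mapsto y_{i_{0}}(1+\pi^{c(\chi)-1}w)$, with parameter $w\in\cO_{\mathfrak{p}}$, which is a measure-preserving bijection $B_{a}\to B_{a}$ (and in particular preserves $|dy|$ since $1+\pi^{c(\chi)-1}w$ is a unit). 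Because $c(\chi)\geq 2$, $\Phi$ is residual, and $y_{i_{0}}\in\mathfrak{p}$ makes the $y_{i_{0}}$-perturbation of size $\pi^{c(\chi)}$, each of the factors $\Phi(h(y))$, $\chi(\ac(u(y)))$ and $|f(h(y))|^{s}|\Jac(h)(y)|$ is unchanged, while $\chi(\ac(y_{i_{0}}))^{N_{i_{0}}}$ picks up $\chi(1+\pi^{c(\chi)-1}w)^{N_{i_{0}}}$. Invariance of $I_{a}$ under this substitution yields
\[
I_{a}=\chi(1+\pi^{c(\chi)-1}w)^{N_{i_{0}}}\,I_{a}\qquad\text{for every }w\in\cO_{\mathfrak{p}}.
\]
The hypothesis $N_{i_{0}}\notin\mathfrak{p}$ together with the congruence $(1+\pi^{c(\chi)-1}w)^{N_{i_{0}}}\equiv 1+N_{i_{0}}\pi^{c(\chi)-1}w\pmod{\mathfrak{p}^{c(\chi)}}$ shows that $w\mapsto\chi(1+\pi^{c(\chi)-1}w)^{N_{i_{0}}}$ descends to a non-trivial additive character of $k_{\mathfrak{p}}$, so picking $w$ where the value is distinct from $1$ forces $I_{a}=0$.

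When $T_{a}=\emptyset$, the map $h$ is étale at $a$ and induces a measure-preserving bijection between $B_{a}$ and the residue disk at $\overline{h}(a)\in k_{\mathfrak{p}}^{n}$, with $\overline{f}(\overline{h}(a))\neq 0$. If $\overline{h}(a)\notin\Supp\overline{\Phi}$ then $I_{a}=0$ immediately; otherwise the hypothesis $C_{\overline{f}}\cap\Supp\overline{\Phi}\subset\overline{f}^{-1}(0)$ forces $\overline{h}(a)\notin C_{\overline{f}}$, so some partial derivative $\partial_{j}\overline{f}(\overline{h}(a))$ is non-zero. I would then perform the additive substitution $x\mapsto x+\pi^{c(\chi)-1}we_{j}$ on the corresponding residue disk in $\cO_{\mathfrak{p}}^{n}$; a Taylor expansion whose second-order error lies in $\mathfrak{p}^{2(c(\chi)-1)}\subset\mathfrak{p}^{c(\chi)}$ shows that the integrand is multiplied by $\chi(1+\pi^{c(\chi)-1}w\,\partial_{j}f(x)/f(x))$, and averaging the invariance identity over $w\in\cO_{\mathfrak{p}}$ reduces matters to the vanishing of the integral of a non-trivial additive character on $k_{\mathfrak{p}}$. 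The main obstacle I expect is the bookkeeping in the first case: verifying simultaneously that all four factors of the integrand are genuinely invariant under the multiplicative substitution so that only the clean factor $\chi(1+\pi^{c(\chi)-1}w)^{N_{i_{0}}}$ remains. Every hypothesis enters at once here, namely good reduction to secure the normal-crossings local coordinates, $c(\chi)\geq 2$ so that the higher-order corrections in $u$, $\Phi$ and $\Jac(h)$ are absorbed by $\chi$ (or disappear modulo $\mathfrak{p}$), and $N_{i_{0}}\notin\mathfrak{p}$ to keep the surviving character non-trivial after taking the $N_{i_{0}}$-th power.
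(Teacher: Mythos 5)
The paper does not give its own proof of this theorem; it is cited directly from Denef (\cite{08}, Theorem 2.1; \cite{09}, Theorem 3.3). Your argument is correct and is essentially Denef's original proof: pull back along the resolution $h$, decompose $Y(\cO_{\mathfrak{p}})$ into residue disks $B_a$, and on each disk exhibit a measure-preserving substitution of size $\mathfrak{p}^{c(\chi)-1}$ (multiplicative in a normal-crossings coordinate when $T_a\neq\emptyset$, additive in a transverse direction of $\overline{f}$ when $T_a=\emptyset$) under which the integrand acquires a nontrivial additive character of $k_{\mathfrak{p}}$, forcing the contribution to vanish; the hypotheses on good reduction, on $c(\chi)\geq 2$, on $N_i\notin\mathfrak{p}$, and on $C_{\overline{f}}\cap\Supp(\overline{\Phi})\subset\overline{f}^{-1}(0)$ enter exactly where you say they do.
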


As a consequence of these results, one can obtain the following description of the exponential sums $E_\Phi(z, \mathfrak{p},f)$. This result and its proof are very similar to that of Corollary 1.4.5 from \cite{09}.

\begin{cor}\label{07}
Suppose that $\Phi$ is residual, $\mathfrak{p}\notin S$, $N_{i}\notin\mathfrak{p}$ for all $i\in T$, and $C_{\overline{f}}\cap\Supp(\overline{\Phi})\subset\overline{f}^{-1}(0)$. Then $E_{\Phi}(z,\mathfrak{p},f)$ is a finite $\CC$-linear combination of functions of the form $\chi(\textnormal{ac}(z))|z|^{\lambda}(\log_{q}|z|)^{\beta}$ with coefficients independent of $z$, where $\lambda\in\CC$ is a pole of $(q^{s+1}-1)Z_{\Phi}(\mathfrak{p},\chi_{\textnormal{triv}},s,f)$ or of $Z_{\Phi}(\mathfrak{p},\chi,s,f)$, for $\chi \neq \chi_{\textnormal{triv}}$, and $\beta\in\NN$, such that $\beta\leq (\mbox {multiplicity of pole }\lambda)-1$, provided that $|z|$ is big enough. 
\end{cor}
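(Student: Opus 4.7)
The plan is to derive the corollary directly from Proposition \ref{04} by analysing each of the three summands that express $E_{\Phi}(u\pi^{-m}, \mathfrak{p}, f)$ under the substitution $z = u\pi^{-m}$, $u \in \cO_{\mathfrak{p}}^{\times}$.

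First, I would use Theorem \ref{06} to eliminate all characters of large conductor. The hypotheses on $\mathfrak{p}$, on the $N_i$, and on $C_{\overline{f}}\cap\Supp(\overline{\Phi})$ are exactly those of Theorem \ref{06}, so $Z_{\Phi}(\mathfrak{p},\chi,s,f) = 0$ whenever $c(\chi) > 1$. Hence only the trivial character and the finitely many characters with $c(\chi) = 1$ contribute, and the sum in Proposition \ref{04} collapses to a finite one. Note that for each of these $\chi$, the Gauss factor $g_{\chi^{-1}}\chi(u)$ is of the shape (constant)$\cdot \chi(\ac(z))$, since $\ac(z) = u$.

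Next, for each surviving character, Theorem \ref{05} presents $Z_{\Phi}(\mathfrak{p},\chi,s,f)$ as a rational function in $t = q^{-s}$, with denominator a product of factors $(1 - q^{-N_i s - \nu_i}) = (1 - q^{-\nu_i} t^{N_i})$. A partial fraction decomposition in $t$ writes this rational function as a polynomial $P_{\chi}(t)$ plus a finite sum of terms $A_{\lambda,j,\chi}(1 - q^{\lambda} t)^{-j}$, where $\lambda$ runs over the poles of $Z_{\Phi}(\mathfrak{p},\chi,s,f)$ in $s$ and $j$ is bounded by the order of the pole. For $\chi = \chi_{\textnormal{triv}}$ one must incorporate the additional factor $(t-q)/((q-1)(1-t))$ from the second summand of Proposition \ref{04}; after this absorption the relevant poles are precisely those of $(q^{s+1} - 1)Z_{\Phi}(\mathfrak{p},\chi_{\textnormal{triv}},s,f)$, which matches the statement.

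Finally, one extracts the coefficient of $t^{m - c(\chi)}$ using the identity
\[
\mbox{Coeff}_{t^{N}}\frac{1}{(1 - q^{\lambda} t)^{j}} = \binom{N+j-1}{j-1} q^{\lambda N},
\]
so that each term of the partial fraction decomposition contributes a polynomial in $m$ of degree at most $j - 1$ times $q^{\lambda(m - c(\chi))}$. Since $|z| = q^{m}$ and $c(\chi)$ is bounded, this equals a constant multiple of $|z|^{\lambda}(\log_{q}|z|)^{\beta}$ with $\beta \le j - 1$. The polynomial part $P_{\chi}(t)$ contributes $0$ to the coefficient of $t^{m - c(\chi)}$ whenever $m - c(\chi) > \deg P_{\chi}$, which explains the clause ``provided $|z|$ is big enough.'' I do not foresee any essential obstacle beyond careful bookkeeping: the real care lies in tracking which $(\lambda,j)$ actually arise in the partial fraction decomposition and matching the multiplicities with the stated bound on $\beta$; everything analytic is already contained in Proposition \ref{04} and Theorems \ref{05}--\ref{06}.
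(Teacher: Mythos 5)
Your approach is the same as the paper's (the proof there is literally one line, ``combine Theorems \ref{04}, \ref{05} and \ref{06}''), and most of the detail you supply is correct: the character sum becomes finite via Theorem \ref{06}, $g_{\chi^{-1}}\chi(u)$ becomes a constant times $\chi(\ac(z))$, and the partial-fraction extraction of $\mathrm{Coeff}_{t^{m-c(\chi)}}$ correctly produces $|z|^{\lambda}(\log_q|z|)^{\beta}$ with $\beta$ bounded by the order of the pole minus one, once $|z|$ exceeds the polynomial part.

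There is, however, a gap in your treatment of the trivial character. You say that, after ``absorbing'' the factor $(t-q)/((q-1)(1-t))$ in the second summand of Proposition \ref{04}, the relevant poles are precisely those of $(q^{s+1}-1)Z_{\Phi}(\mathfrak{p},\chi_{\mathrm{triv}},s,f)$. But the factor $1/(1-t)$ introduces a simple pole at $t=1$, i.e.\ $\lambda=0$, and $s=0$ is \emph{not} a pole of $(q^{s+1}-1)Z_{\Phi}(\mathfrak{p},\chi_{\mathrm{triv}},s,f)$: from Theorem \ref{05}, the denominators $1-q^{-N_i s-\nu_i}$ do not vanish at $s=0$ since $\nu_i\geq 1$. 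So if you only track the second summand, you are left with a spurious nonzero constant term. The point you have omitted is the first summand $Z_{\Phi}(\mathfrak{p},\chi_{\mathrm{triv}},0,f)$: writing $(t-q)/((q-1)(1-t))=-1/(q-1)-1/(1-t)$, one sees that $Z_{\Phi}(\mathfrak{p},\chi_{\mathrm{triv}},0,f)-\mathrm{Coeff}_{t^{m-1}}\bigl(Z_{\Phi}(\mathfrak{p},\chi_{\mathrm{triv}},s,f)/(1-t)\bigr)$ is the tail $\sum_{k\geq m}\mathrm{Coeff}_{t^k}Z_{\Phi}(\mathfrak{p},\chi_{\mathrm{triv}},s,f)$, which kills the $\lambda=0$ contribution for $|z|$ large and leaves only the genuine poles of $Z_{\Phi}$ (minus the one at $s=-1$ that $t-q$ cancels, which is exactly the zero of $q^{s+1}-1$). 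Once this cancellation is made explicit, your argument is complete.
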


\begin{proof}
It is easy to prove by combining the Theorems \ref{04}, \ref{05} and \ref{06}.
\end{proof}

\section{The first approach by Model theory}\label{sec: model theory}

The first part of this section will contain some background on the theory of motivic integration. For the details we refer to \cite{02} or \cite{05}. In the second part we will use this theory to give our first proof of the Main Theorem \ref{*}.

\subsection{Constructible Motivic Functions}

\subsubsection{The language of Denef-Pas}\label{sec:DP}

Let $K$ be a valued field, with valuation map $\ord:K^{\times}\rightarrow \Gamma_K$ for some additive ordered  group $\Gamma_K$, and let $\cO_K$ be the valuation ring of $K$ with maximal ideal $\cM_K$ and residue field  $k_K$. We denote by $x\rightarrow\overline{x}$ the projection $\cO_K\rightarrow k_K$ modulo $\cM_K$. An angular component map (modulo $\cM_K$) on $K$ is any multiplicative map $\ac:K^{\times}\rightarrow k_K^{\times}$ satisfying $\ac(x)=\overline{x}$ for all $x$ with $\ord(x)=0$. It can be extended to $K$ by putting $\ac(0)=0$.

The \emph{language $\LPas$ of Denef-Pas} is the three-sorted language

\begin{center}
$(\mathcal{L}_{\rm ring},\mathcal{L}_{\rm ring},\mathcal{L}_{\rm oag},\ord,\ac)$
\end{center}
with as sorts:
\begin{itemize}
\item[(i)] a sort $\VF$ for the valued field-sort,
\item[(ii)] a sort $\RF$ for the residue field-sort, and
\item[(iii)] a sort $\VG$ for the value group-sort.
\end{itemize}

The first copy of  $\mathcal{L}_{\rm ring}$ is used for the sort $\VF$, the second copy for $\RF$ and the language $\mathcal{L}_{\rm oag}$, the language $(+,<)$ of ordered abelian groups, is used for $\VG$. Furthermore $\ord$ denotes the valuation map from non-zero elements of $\VF$ to $\VG$, and $\ac$ stands for an angular component map from $\VF$ to $\RF$. 

As usual for first order formulas, $\LPas$-formulas are built up from the $\LPas$-symbols together with variables, the logical connectives $\wedge$ (and), $\vee$ (or), $\neg$ (not), the quantifiers $\exists, \forall$, the equality symbol $=$, and possibly parameters (see \cite{18} for more details).\\

Let us briefly recall the statement of the Denef-Pas theorem on elimination of valued field quantifiers in the language $\LPas$. Denote by $H_{\ac,0}$ the $\LPas$-theory of the above described structures whose valued field is Henselian and whose residue field is of characteristic zero. Then the theory $H_{\ac,0}$ admits elimination of quantifiers in the valued field sort, as stated in the following theorem.

\begin{thm}[Pas, \cite{18}]\label{08}
The theory $H_{\overline{\textnormal{ac}},0}$ admits elimination of quantifiers in the valued field sort. More precisely, every $\mathcal{L}_{\textnormal{DP}}$-formula $\phi(x,\xi,\alpha)$ (without parameters), with $x$ denoting variables in the $\VF$-sort, $\xi$ variables in the $\RF$-sort and $\alpha$ variables in the $\VG$-sort, is $H_{\overline{\textnormal{ac}},0}$-equivalent to a finite disjunction of  formulas of the form
\[
\psi\big(\overline{\textnormal{ac}}f_{1}(x),\ldots,\overline{\textnormal{ac}}f_{k}(x),\xi\big)\wedge\vartheta\big(\ord f_{1}(x),\ldots,\ord f_{k}(x),\alpha\big),
\]
where $\psi$ is an $\cL_{\rm ring}$-formula, $\vartheta$ an $\cL_{\rm oag}$-formula and $f_{1},\ldots,f_{k}$ polynomials in $\mathbb{Z}[X]$.
\end{thm}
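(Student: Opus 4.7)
The plan is to follow the classical model-theoretic route to quantifier elimination: reduce to eliminating a single existential $\VF$-quantifier from a quantifier-free body, analyse the behaviour of polynomial terms under $\ord$ and $\ac$ via a Newton-polygon case split, and apply Hensel's lemma to dispose of the $\VF$-equations, where the assumption $\charac k_K = 0$ is essential.

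By induction on formula complexity (handling $\wedge$, $\vee$, $\neg$ trivially and rewriting $\forall x\,\phi$ as $\neg\exists x\,\neg\phi$), it suffices to eliminate a single quantifier $\exists x$, where $x$ is a $\VF$-variable, from a quantifier-free $\LPas$-formula. Putting the body into disjunctive normal form and distributing $\exists x$ over the disjunction, we may assume the body is a conjunction of atomic and negated atomic formulas. Since $\cL_{\rm ring}$ has equality as its only basic relation, each atomic subformula in which $x$ appears has one of three shapes, one per sort: a $\VF$-equation $p(x, y) = 0$ with $p \in \ZZ[x, y]$; an $\RF$-atomic formula whose $x$-dependence enters only through terms $\ac\,p_i(x, y)$; or a $\VG$-atomic formula whose $x$-dependence enters only through terms $\ord\,p_i(x, y)$.

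The combinatorial heart of the argument is a uniform Newton-polygon case split on the finite list of relevant polynomials $p_i(x, y) = \sum_j a_{ij}(y)\,x^j$. Splitting into finitely many cases according to the ordering of the values $\ord a_{ij}(y) + j\gamma$ as $\gamma = \ord x$ varies, one reaches cells where a single monomial of each $p_i$ strictly dominates, so $\ord p_i(x,y)$ becomes an explicit $\ZZ$-affine function of $\ord x$ and the $\ord a_{ij}(y)$, and $\ac p_i(x,y) = \ac a_{ij}(y)\cdot (\ac x)^j$. Within a cell, every $\ord\,p_i$ and $\ac\,p_i$ term through which $x$ enters the formula is thereby re-expressed using only $\ord x$, $\ac x$, $\ord y$, $\ac y$. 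In the absence of $\VF$-equations, $\exists x$ reduces to the existence of a pair $(\gamma,\xi)\in\VG\times\RF^\times$ in that cell with certain residue-field and value-group conditions; since $\ord$ is surjective onto $\Gamma_K$ and $\ac$ is surjective onto $k_K^\times$ on every valuation level, any such pair is realised by some $x\in K$, and the existential can be absorbed into quantifiers over the $\RF$- and $\VG$-sorts.

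The main obstacle is the presence of $\VF$-equations $p(x,y)=0$, which cannot be controlled by $(\ord x,\ac x)$ alone. Here one invokes Hensel's lemma. In a Newton-polygon cell where two or more monomials of $p$ attain the minimum valuation, rescaling so that $p$ becomes primitive in $\cO_K[x]$ and reducing modulo $\cM_K$ yields a non-zero polynomial $\overline{p}_\ast(\overline{x})\in k_K[\overline{x}]$; the assumption $\charac k_K=0$ guarantees that the squarefree factorisation of $\overline{p}_\ast$ separates roots without inseparability issues, and Hensel's lemma (valid since $K$ is Henselian) then gives that $p$ admits a root $x\in K$ with $\ac x=\xi$ if and only if $\overline{p}_\ast(\xi)=0$, a pure $\cL_{\rm ring}$-condition on $\xi\in k_K$. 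Iterating, every $\VF$-equation can be replaced by an equivalent residue-field condition on the angular components. After collecting all the residue-field and value-group conditions produced in each cell and re-indexing the polynomials that ultimately appear under $\ac$ and $\ord$, the result takes the stated disjunctive form $\psi(\ac f_1(x),\ldots,\ac f_k(x),\xi)\wedge\vartheta(\ord f_1(x),\ldots,\ord f_k(x),\alpha)$.
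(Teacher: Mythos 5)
The paper itself does not prove Theorem~\ref{08}; it is quoted directly from Pas~\cite{18}, so there is no in-paper proof to compare against. Your sketch follows the right broad strategy (reduce to $\exists x$ over a conjunction of literals, split on the Newton polygon of the polynomials $p_i(x,y)$, invoke Hensel), but two of the key steps are either false as written or missing, and both point to the same absent ingredient: the cell decomposition with \emph{definable centers} and the induction on degree that drive the Pas/Cohen/Denef argument.

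Your Hensel claim, ``$p$ admits a root $x\in K$ with $\ac x=\xi$ if and only if $\overline{p}_\ast(\xi)=0$,'' is false whenever $\xi$ is a multiple root of $\overline{p}_\ast$, residue characteristic $0$ notwithstanding. Take $K=\CC((t))$ and $p(x)=(x-1)^2-t$, already primitive in $\cO_K[x]$; the reduction is $(\overline{x}-1)^2$, which vanishes at $\overline{x}=1$, yet $p$ has no root in $\CC((t))$ because $\sqrt t\notin\CC((t))$. Hensel lifts only simple roots of $\overline{p}_\ast$. At a multiple root one must pick a definable center $c(y)$ approximating the would-be root, substitute $x\mapsto c(y)+z$, and induct on degree; your word ``iterating'' gestures at this but neither sets up a decreasing invariant nor explains why the center is a definable $\VF$-valued function of $y$ expressible so that the output lands in the required $\ZZ[X]$-polynomial form. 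Relatedly, your Newton-polygon reduction is argued only on cells where a single monomial of each $p_i$ strictly dominates, and the boundary (tie) cells are handled only for the genuine $\VF$-equations. But the tie cells are exactly the cells where $\ord p_i(x,y)$ and $\ac p_i(x,y)$ are \emph{not} determined by $(\ord x,\ac x,\ord y,\ac y)$ because of cancellation, and this happens for every polynomial that appears under $\ord$ or $\ac$ in the $\RF$- and $\VG$-atomic formulas, not just for those appearing as equations $p(x,y)=0$. All of the real work of the theorem is concentrated on these tie cells, and they must all be treated by the same change-of-center-and-induct mechanism; without it, your claim that on each cell every $\ord\,p_i$ and $\ac\,p_i$ has an explicit expression in $\ord x,\ac x,\ord y,\ac y$ simply fails, and the elimination does not go through.
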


This theorem implies the following, useful corollary.

\begin{cor}[\cite{02}, Corollary 2.1.2]\label{QE}Let $(K,k,\Gamma)$ be a model of the theory $H_{\ac,0}$ and $S$ a subring of $K$. Let $T_{S}$ be the set of atomic $\cL_{\textnormal{DP}}\cup S$-sentences and negations of atomic sentences $\varphi$ such that $S\models\varphi$. We take $H_{S}$ to be the union of $H_{\ac,0}$ and $T_{S}$. Then Theorem \ref{08} holds with $H_{\ac,0}$ replaced by $H_{S}$, $\cL_{\textnormal{DP}}$ replaced by $\cL_{\textnormal{DP}}\cup S$, and $\ZZ[X]$ replaced by $S[X]$.
\end{cor}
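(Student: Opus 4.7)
The plan is to reduce to Theorem \ref{08} by treating the finitely many $S$-parameters occurring in a given formula as fresh valued field variables, and then substituting them back at the end. Concretely, suppose $\phi(x,\xi,\alpha)$ is an $\LPas\cup S$-formula, and let $s_1,\ldots,s_m \in S$ enumerate the $S$-parameters appearing in it. Because $S \subseteq K$, all of these parameters live in the valued field sort, so I introduce fresh $\VF$-variables $y = (y_1,\ldots,y_m)$ and rewrite $\phi(x,\xi,\alpha)$ as $\tilde\phi(x,y,\xi,\alpha)\big|_{y=s}$, where $\tilde\phi$ is a genuine parameter-free $\LPas$-formula.

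Next I would apply Theorem \ref{08} to $\tilde\phi$, treating the concatenation $(x,y)$ as the tuple of $\VF$-variables. This immediately yields an $H_{\ac,0}$-equivalence
\[
\tilde\phi(x,y,\xi,\alpha) \;\leftrightarrow\; \bigvee_{i} \psi_i\bigl(\ac f_{i,1}(x,y),\ldots,\ac f_{i,k_i}(x,y),\xi\bigr) \wedge \vartheta_i\bigl(\ord f_{i,1}(x,y),\ldots,\ord f_{i,k_i}(x,y),\alpha\bigr),
\]
with each $\psi_i$ an $\cL_{\rm ring}$-formula, each $\vartheta_i$ an $\cL_{\rm oag}$-formula and each $f_{i,j} \in \ZZ[X,Y]$.

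Finally I substitute $y := s$ on both sides. The polynomials $f_{i,j}(X,s)$ then lie in $S[X]$, and the resulting disjunction has exactly the shape demanded by the conclusion. Since the above equivalence holds in every model of $H_{\ac,0}$, it holds in every model of $H_S$ as well (because $H_S \supseteq H_{\ac,0}$); the role of $T_S$ is purely to pin down the interpretation of the constant symbols $s_i$ in every model of $H_S$ so that the substitution makes sense on the nose.

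I do not anticipate any real obstacle: the corollary is a purely formal reparameterisation of Theorem \ref{08}. The one point worth a moment's care is the opening move, namely the legitimacy of treating $s_i \in S \subseteq K$ syntactically as a fresh $\VF$-variable. This is immediate because the language $\LPas \cup S$ enlarges $\LPas$ only by constant symbols of valued field sort, so constant-for-variable substitution preserves well-formedness of formulas and, in any fixed model of $H_S$, preserves truth values.
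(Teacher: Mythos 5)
Your proof is correct, and it is the standard argument for such a result. Note that the paper itself does not prove this corollary — it simply cites it as Corollary 2.1.2 of the Cluckers--Loeser reference — so there is no in-text proof to compare against; what you wrote is essentially the argument one would expect to find there. The key move of replacing the finitely many $S$-constants of valued field sort by fresh $\VF$-variables, applying the constant-free Theorem~\ref{08}, and then substituting back is exactly the usual ``reparameterize and specialize'' reduction, and each step you take is sound: the $H_{\ac,0}$-equivalence produced by Theorem~\ref{08} is a universally quantified theorem of $H_{\ac,0}$, so it persists in any model of $H_S$ (whose $\LPas$-reduct is a model of $H_{\ac,0}$), and substituting the constants for the fresh variables yields polynomials in $S[X]$ because $S$ is a subring of $K$ and hence contains $\ZZ$.

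One small imprecision worth flagging: your remark that $T_S$ serves ``purely to pin down the interpretation of the constant symbols'' slightly overstates matters. The atomic diagram $T_S$ does not determine the interpretations uniquely; rather, it forces the constants to satisfy in every model of $H_S$ the same quantifier-free relations they satisfy in $K$, so that the assignment $s_i \mapsto s_i^{K'}$ is an embedding of $S$ as an $\LPas$-substructure. As you correctly observe, this is not actually used in the proof of the displayed equivalence (which already holds in all models of $H_{\ac,0}$ uniformly in $y$); $T_S$ is part of the statement so that ``$H_S$'' is the theory one actually wants to work with downstream. This does not affect the validity of your argument.
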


It is important to remark that by compactness, this theorem and its corollary are still true in the case of $\mathbb{Q}_{p}$ for $p$ sufficiently large.\\

We will need the following notion. Let $k$ be a fixed field of characteristic zero. We denote by $\mathcal{L}_{\text{DP},k}$ the language obtained by adding constant symbols to the language $\LPas$  in the $\VF$, resp.\ $\RF$ sort, for every element of $k((t))$, resp.\ $k$. Then for any field $K$ containing $k$, $(K((t)),K,\ZZ)$ is an $\mathcal{L}_{\text{DP},k}$-structure.

\subsubsection{Constructible motivic functions}
In this section we will recall very quickly the definition of constructible motivic functions. For the details we refer to \cite{02}.

We fix a field $k$ of characteristic zero. Denote by $\text{Field}_{k}$ the category of all fields containing $k$. For any $\cL_{\text{DP},k}$-formula $\phi$, we denote by $h_{\phi}(K)$ the set of points in 
\[
h[m,n,r](K):=K((t))^{m}\times K^{n}\times\ZZ^{r},
\]
which satisfy $\phi$. We call the assignment $K\mapsto h_{\phi}(K)$ a \emph{$k$-definable subassignment} and we define $\text{Def}_k$ to be the category of $k$-definable subassignments. A point $x$ of $X \in \text{Def}_k$ is a tuple $x=(x_{0},K)$ where $x_{0}\in X(K)$ and $K\in \text{Field}_{k}$. In general, for $S\in \text{Def}_{k}$ we define the category $\text{Def}_{S}$ of definable subassigments $X$ with a definable map $X\rightarrow S$. We denote $\text{RDef}_{S}$  for the category of definable subassignments of $S\times h[0,n,0]$ where $n\in\NN $. We recall that the Grothendieck semigroup $SK_{0}(\RDef_{S})$ is the quotient of the free abelian semigroup over symbols $[Y\rightarrow S]$, with $Y\rightarrow S$ in $\RDef_{S}$, by the relations 
\begin{itemize}
\item[(1)] $[\emptyset\rightarrow S]=0$;
\item[(2)] $[Y\rightarrow S]=[Y'\rightarrow S]$, if $Y\rightarrow S$ is isomorphic to $Y'\rightarrow S$;
\item[(3)] $[(Y\cup Y')\rightarrow S]+[(Y\cap Y')\rightarrow S]=[Y\rightarrow S]+[Y'\rightarrow S]$, for $Y$ and $Y'$ definable subassignments of some $S[0,n,0] = S \times h[0,n,0] \rightarrow S$.
\end{itemize}
Similarly, we recall that the Grothendieck group $K_{0}(\RDef_{S})$ is the quotient of the free abelian group over the symbols $[Y\rightarrow S]$, with $Y\rightarrow S$ in $\RDef_{S}$, by the relations (2) and (3). The Cartesian fiber product over $S$ induces a natural semi-ring (resp.\ ring) structure on $SK_{0}(\RDef_{S})$ (resp.\ $K_{0}(\RDef_{S})$) by  setting
\begin{center}
$[Y\rightarrow S]\times [Y'\rightarrow S]=[Y \times_S Y'\rightarrow S]$.
\end{center}
We consider a formal symbol $\mathbb{L}$ and the ring
\[
\mathbb{A}:=\mathbb{Z}\Big[\mathbb{L},\mathbb{L}^{-1},\Big(\frac{1}{1-\mathbb{L}^{-{i}}}\Big)_{i>0}\Big].
\]
For every real number $q>1$, there is a unique morphism of rings $\vartheta_{q}:\mathbb{A}\rightarrow \mathbb{R}$ mapping $\mathbb{L}$ to $q$, and it is obvious that $\vartheta_{q}$ is injective for  $q$ transcendental. We define a partial ordering on $\mathbb{A}$ by setting $a\geq b$ if, for every real number $q>1$, $\vartheta_{q}(a)\geq \vartheta_{q}(b)$. Furthermore we denote by $\mathbb{A}_{+}$ the set $\lbrace a\in \mathbb{A}|a\geq 0\rbrace$.

\begin{defn}\label{09}
Let $S$ be a definable subassignment in $\text{Def}_{k}$ and denote by $\lvert S \rvert$ its set of points. The ring $\mathcal{P}(S)$ of \emph{constructible Presburger functions on $S$} is defined as the subring of the ring of functions $\lvert S \rvert \rightarrow \mathbb{A}$, generated by
\begin{itemize}
\item the constant functions $\lvert S \rvert \rightarrow \mathbb{A}$;
\item the functions $\widehat{\alpha}: \lvert S \rvert \rightarrow\mathbb{Z}$ that correspond to a definable morphism $\alpha:S\rightarrow h[0,0,1]$;
\item the functions $\mathbb{L}^{\widehat{\beta}}:\lvert S \rvert \rightarrow \mathbb{A}$ that correspond to a definable morphism $\beta: S\rightarrow h[0,0,1]$.
\end{itemize}
We denote by $\mathcal{P}_{+}(S)$ the semiring of funtions in $\mathcal{P}(S)$ with values in $\mathbb{A}_{+}$.
\end{defn}

\begin{defn}\label{10}
Let $Z$ be in $\text{Def}_{k}$. For $Y$ a definable subassignment of $Z$, we denote by \textbf{1}$_{Y}$ the function in $\mathcal{P}(Z)$ with value $1$ on $|Y|$ and $0$ on $|Z\backslash Y|$. We denote by $\mathcal{P}^{0}_{Z}$ (resp.\ $\mathcal{P}^{0}_{+}(Z)$) the subring (resp.\ subsemiring) of $\mathcal{P}(Z)$ (resp.\ $\mathcal{P}_{+}(Z)$) generated by the functions \textbf{1}$_{Y}$, for all definable subassignments $Y$ of $Z$, and by the constant function $\mathbb{L}-1$. Notice that we have a canonical ring morphism $\mathcal{P}^{0}(Z)\rightarrow K_{0}(\text{RDef}_{Z})$ (resp.\ semiring morphism $\mathcal{P}^{0}_{+}(Z)\rightarrow SK_{0}(\text{RDef}_{Z})$) sending \textbf{1}$_{Y}$ to the class of the inclusion morphism $[i:Y\rightarrow Z]$ and $\mathbb{L}-1$ to $\mathbb{L}_{Z}-1$. By $\mathbb{L}_{Z}$ we mean the class of the element $[Z\times h[0,1,0] \to Z]$ in $K_{0}(\text{RDef}_{Z})$ (resp.\ $SK_{0}(\text{RDef}_{Z})$).
\end{defn}

\begin{defn}\label{10.1}
We say that a function $\varphi\in\mathcal{P}(S\times\mathbb{Z}^{r})$ is \emph{$S$-integrable}, if for every $s\in S$, the family $(\varphi(s,i))_{i\in\mathbb{Z}^{r}}$ is summable. We denote by $I_{S}\mathcal{P}(S\times\mathbb{Z}^{r})$ the $\mathcal{P}(S)$-module of $S$-integrable functions.

Now we define the semiring $\mathcal{C}_{+}(Z)$ of \emph{positive constructible motivic functions on $Z$} as 

\begin{center}
$\mathcal{C}_{+}(Z)=SK_{0}(\text{RDef}_{Z})\otimes_{\mathcal{P}^{0}_{+}(Z)}\mathcal{P}_{+}(Z)$
\end{center}
and the ring $\mathcal{C}(Z)$ of \emph{constructible motivic functions on $Z$} as
\begin{center}
$\mathcal{C}(Z)=K_{0}(\text{RDef}_{Z})\otimes_{\mathcal{P}^{0}(Z)}\mathcal{P}(Z)$.
\end{center}
\end{defn}

Let $Z$ be a subassignment of $h[m,n,r]$. We denote by $\dim Z$ the dimension of Zariski closure of $p(Z)$ for $p$ the projection $h[m,n,r]\rightarrow h[m,0,0]$. For a natural number $d$, we denote by $\mathcal{C}^{\leq d}$ the ideal of $\mathcal{C}(Z)$ generated by all elements of the form $\textbf{1}_{Y}$ with $Y$ a subassignment of $Z$ such that $\dim Y\leq d$. We set  $\mathcal{C}^{d}=\cC^{\leq d}\diagup\cC^{\leq d-1}$ and $C(Z)=\oplus_{d\geq 0}\cC^{d}$. 

For each $Y$ in $\text{Def}_{S}$ we can define a graded subgroup $I_{S}(Y)$ of $C(Y)$, as in \cite{02}, together with a map  $f_{!}: I_{S}(Y)\rightarrow I_{S}(Z)$, for any map $f:Y\rightarrow Z$ in $\text{Def}_{S}$. When $S=h[0,0,0]$ and $f:Y\rightarrow h[0,0,0]$, the map  $f_{!}$ is exactly the same as taking the integral over $Y$.

\subsubsection{The language $\mathcal{L}_{\mathcal{O}}$}\label{11}

Now we suppose that $K$ is a number field with $\mathcal{O}$ its ring of integers. We denote by $\mathcal{F}_{\mathcal{O}}$ the set of all non-archimedean local fields over $\mathcal{O}$, which is endowed the structure of an $\mathcal{O}$-algebra. For $N\in \NN$ we denote by $\mathcal{F}_{\mathcal{O},N}$ the set of all local fields in $\mathcal{F}_{\mathcal{O}}$ with residue  field of characteristic at least $N$. The language $\mathcal{L}_{\mathcal{O}}$ is obtained from the language $\mathcal{L}_{\text{DP},K}$ by restricting the constant symbols to $\mathcal{O}[[t]]$ for the valued field sort and to $\mathcal{O}$ for the residue field sort.

Let $F\in \mathcal{F}_{\mathcal{O}}$, we write $k_{F}$ for its residue field, $q_{F}$ for the number of elements in $k_F$,  $\mathcal{O}_{F}$ for its valuation ring and $\mathcal{M}_{F}$ for its maximal ideal. For each choice of a uniformising element $\varpi_{F}$ of $\mathcal{O}_{F}$, there is a unique map $\ac_{\varpi_F}: F^{\times}\rightarrow k_{F}^{\times}$, which extends  the map $\mathcal{O}_{F}^{\times}\rightarrow k_{F}^{\times}$ and sends $\varpi_{F}$ to $1$. Then $(F,k_{F},\ZZ)$ has an $\mathcal{L}_\text{DP}$-structure with respect to $\varpi_{F}$. Moreover $F$ can be equipped with the structure of an $\mathcal{O}[[t]]$-algebra via the morphism:
\begin{align*}
\lambda_{\varpi_{F}}:\mathcal{O}[[t]]&\rightarrow F;\\
\sum_{i\geq 0}a_{i}t^{i}&\mapsto\sum_{i\geq 0}a_{i}\varpi_{F}^{i}.
\end{align*}
By intepreting $a\in \mathcal{O}[[t]]$ as $\lambda_{\varpi_{F}}(a)$, an $\mathcal{L}_{\mathcal{O}}$-formula $\phi$ defines, for each $F\in\mathcal{F}_{\mathcal{O}}$, a definable subset $\phi(F)$ of $F^{m}\times k_{F}^{n}\times\ZZ^{r}$ for some $m,n,r\in \NN$. If we have two $\mathcal{L}_{\mathcal{O}}$-formulas $\phi_{1},\phi_{2}$ which define the same subassignment of $h[m,n,r]$, then, by compactness, $\phi_{1}(F)=\phi_{2}(F)$, for all $F\in\mathcal{F}_{\cO,N}$, for some large enough $N\in \NN$, which does not depend on the choice of a uniformising element.\\

If a definable subassignment is defined in the language $\cL_{\cO}$, then we say that it belongs to $\text{Def}_{\cL_{\cO}}$. In the same way we also say that a constructible function $\theta$ belongs to $\cC(X,\cL_{\cO})$.

If $X\in \text{Def}_{\cL_{\cO}}$, then $X$ is defined by a formula $\phi$ in $\cL_{\cO}$. By the above discussion we can define $X_{F}=\phi(F)$, for any $F\in \cF_{\cO}$. Also if $f:Y\rightarrow Z$ in $\text{Def}_{\cL_{\cO}}$, then we can define a map $f_{F}:Y_{F}\rightarrow Z_{F}$, for any $F\in \cF_{\cO}$.

Now we will explain how to interprete a constructible function $\theta\in \cC(X,\cL_{\cO})$ in a field $F\in\cF_{\cO}$.
If $\theta\in\cP(X)$ we will replace $\LL$ by $q_{F}$ and a definable function $\alpha: X\rightarrow h[0,0,1]$ by a function $\alpha_{F}:X_{F}\rightarrow \ZZ$.
If $\theta\in K_{0}(\text{RDef}_{X,\cL_{\cO}})$ is of the form $[Y\overset{\pi}{\rightarrow} X]$ with $\pi:Y\rightarrow X$ defined by an $\cL_{\cO}$-formula, then we interpret $\theta$ by setting, for all $x \in X_F$,
\[
\theta_{F}(x):=\#(\pi^{-1}(x)).
\]
Notice that these interpretations can depend on the choice of formulas.

\subsubsection{Cell decomposition}

The structure of the sets appearing in a definable subassignment, can be better understood by decomposing the subassignment into `cells'.

\begin{defn} \textbf{Cells}.\label{12} Let $S$ be in Def$_K$ and  $C$ a definable subassignment of $S$. Let $\alpha, \xi, c$ be definable morphisms $\alpha: C\rightarrow h[0,0,1], \xi:C\rightarrow h[0,1,0]$ and $c: C\rightarrow h[1,0,0]$. The \emph{$1$-cell} $Z_{C,\alpha,\xi,c}$ with \emph{basis} $C$, \emph{order} $\alpha$, \emph{center} $c$, and \emph{angular component} $\xi$, is the definable subassignment of $S[1,0,0]$, defined by the formula
\begin{center}
$y\in C \wedge \ord(z-c(y))=\alpha(y) \wedge \overline{\text{ac}}(z-c(y))=\xi(y),$
\end{center}
where $y$ belongs to $S$ and $z$ to $h[1,0,0]$. Similarly the \emph{$0$-cell} $Z_{C,c}$ with \emph{basis} $C$ and \emph{center} $c$, is the definable subassignment of $S[1,0,0]$, defined by the formula
\begin{center}
$y\in C \wedge z=c(y)$.
\end{center}
A definable subassignment $Z$ of $S[1,0,0]$ will be called a \emph{$1$-cell}, resp.\ a \emph{$0$-cell}, if there exists a definable isomorphism 
\begin{center}
$\lambda: Z\rightarrow Z_{C}=Z_{C,\alpha,\xi,c}\subset S[1,s,r]$,
\end{center}
resp.\ a definable isomorphism
\begin{center}
$\lambda: Z\rightarrow Z_{C}=Z_{C,c}\subset S[1,s,0]$,
\end{center}
for some $r,s\geq 0$, some basis $C\subset S[0,s,r]$, resp.\ $S[0,s,0]$, and some $1$-cell $Z_{C,\alpha,\xi,c}$, resp.\ $0$-cell $Z_{C,c}$, such that the morphism $\pi\circ \lambda$, with $\pi$ the projection on the $S[1,0,0]$-factor, is the identity on $Z$. The data $(\lambda, Z_{C,\alpha,\xi,c})$, resp.\ $(\lambda, Z_{C,c})$, will be called a \emph{presentation of the cell $Z$} and denoted for short by $(\lambda, Z_{C})$.
\end{defn}

\begin{thm}[\cite{02}, Thm 7.2.1]\label{13} Suppose that $K$ is a field of characteristic $0$. Let $X$ be a definable subassignment of $S[1,0,0]$ with $S$ in $\textnormal{Def}_{K}$.
\begin{itemize}
\item[(1)] The subassignment $X$ is a finite disjoint union of cells.
\item[(2)] For every $\varphi\in \mathcal{C}(X)$, there exists a finite partition of $X$ into cells $Z_{i}$ with presentation $(\lambda_{i},Z_{C_{i}})$, such that $\varphi|_{Z_{i}}=\lambda_{i}^{*}p_{i}^{*}(\psi_{i})$, with $\psi_{i}\in \mathcal{C}(C_{i})$ and $p_{i}:Z_{C_{i}}\rightarrow C_{i}$ the projection. Similar statements hold for $\varphi$ in $\mathcal{C}_{+}(C)$, in $\mathcal{P}(X)$, in $\mathcal{P}_{+}(X)$, in $K_{0}(\textnormal{RDef}_{Z})$ and in $SK_{0}(\textnormal{RDef}_{Z})$.
\end{itemize}
\end{thm}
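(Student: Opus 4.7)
The plan is to deduce both parts from Pas's quantifier elimination (Theorem \ref{08}), combined with a Hensel-type reduction of the polynomials appearing there to linear factors in the valued-field variable, and a Presburger cell decomposition for the $\cP$-factor in the tensor presentation of $\cC(X)$.

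For part (1), I would first apply Theorem \ref{08} in its parametric form (Corollary \ref{QE}) to the $\cL_{\text{DP}}$-formula defining $X \subset S[1,0,0]$, writing $X$ as a finite disjoint union of sets cut out by conditions of the form $\psi(\overline{\text{ac}}\, f_1(y,z),\ldots,\overline{\text{ac}}\, f_k(y,z),\xi) \wedge \vartheta(\text{ord}\, f_1(y,z),\ldots,\text{ord}\, f_k(y,z),\alpha)$, where $z$ denotes the valued-field variable and the $f_i$ are polynomials in $z$ with coefficients that are definable functions of $y \in S$. I would then reduce to the case where all $f_i$ are linear in $z$, by induction on the maximum $z$-degree: on the part of the base where the reduction of some $f_i$ has a simple root in the residue field, Hensel's lemma produces a definable root $c(y)$ and a factorisation $f_i(y,z) = (z-c(y))\, g(y,z)$ with $\deg_z g < \deg_z f_i$, while on the complementary locus the ord- and $\overline{\text{ac}}$-conditions on $f_i$ collapse to conditions on $y$ alone (together with residue-field data of $z$), which can be absorbed into a refinement of $S$. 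Once every $f_i$ has been brought to the form $z - c_j(y)$, I partition $S[1,0,0]$ according to which $c_{j_0}(y)$ minimises $\text{ord}(z-c_j(y))$ and use that centre to define a $1$-cell with $\alpha(y) := \text{ord}(z-c(y))$ and $\xi(y) := \overline{\text{ac}}(z-c(y))$; the loci where $z = c_j(y)$ yield the $0$-cells.

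For part (2), I would start from a cell decomposition obtained in (1) and further refine each cell so that $\varphi$ restricts to a pullback from the base. Using the presentation $\cC(X) = K_{0}(\text{RDef}_{X}) \otimes_{\cP^{0}(X)} \cP(X)$, it suffices to treat separately a summand $[Y \to X]$ with $Y \in \text{RDef}_X$ and a summand in $\cP(X)$. For the $\cP(X)$-part, after a cell decomposition the only valued-field-dependent quantity is $\alpha(y) = \text{ord}(z-c(y))$, and a Presburger cell decomposition in this variable (with possible further refinement of the base cell $C$) turns each summand into the required pullback $\lambda^* p^* \psi$ with $\psi \in \cP(C)$. For the $K_0(\text{RDef}_X)$-part, I apply Theorem \ref{08} once more to the formula defining $Y \subset X \times h[0,n,0]$; the $\overline{\text{ac}}$-conditions on polynomials in $z$ that appear can be re-expressed, after a further cell refinement mirroring the one used in (1), in terms of the angular component $\xi$ of the cell alone, so that the remaining conditions cut out a subassignment of $C \times h[0,n',0]$ and give the desired class in $K_0(\text{RDef}_C)$.

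The main obstacle is the uniform Hensel-splitting step in (1): the factorisation has to take place inside the category of definable subassignments over the variable base $S$, without passing to non-definable field extensions and without an a-priori bound on how many residue-field or value-group auxiliary variables are needed. Arranging that the hypotheses of Hensel's lemma become first-order conditions on $(y,z)$, and that the bookkeeping of roots and their angular components remains definable throughout the induction, is the technical heart of the argument in \cite{02}.
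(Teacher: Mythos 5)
The paper does not prove this statement: the theorem is imported verbatim from Cluckers--Loeser \cite{02}, whose argument in turn rests on the Denef--Pas cell decomposition theorem built on \cite{18}. Your outline captures the high-level strategy of those references --- Pas quantifier elimination, reduction to polynomials linear in the valued-field variable by peeling off definable roots, and using those (possibly residue-field-parametrised) roots as cell centres --- and your treatment of part (2), splitting off the $\cP(X)$-factor and the $K_0(\textnormal{RDef}_X)$-factor and finishing with a Presburger cell decomposition in the $\alpha$-variable, is the right strategy.

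The genuine gap is in the Hensel-splitting induction of part (1). You split off a linear factor only when $\bar f_i$ has a \emph{simple} root at $\bar z$, and you assert that on the complementary locus the $\ord$- and $\overline{\textnormal{ac}}$-conditions on $f_i$ ``collapse to conditions on $y$ alone (together with residue-field data of $z$)'' that can be absorbed into a refinement of $S$. This fails for multiple roots: already $f(z)=z^2$ has $\ord(f(z))=2\,\ord(z)$ and $\overline{\textnormal{ac}}(f(z))=\overline{\textnormal{ac}}(z)^2$, determined by the cell data $(\alpha,\xi)$ of a genuine $1$-cell centred at $0$, not by $y$ and the residue $\bar z$ (which is $0$ throughout). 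More seriously, when $\bar f_i$ has a multiple root at $\bar z$, the polynomial $f_i(y,\cdot)$ may split into several factors whose centres coincide modulo $\cM_K$, or admit no root in $K$ at all while its valuation still depends finely on $\ord(z-c)$ for a centre $c$ that a single Hensel lift does not produce. The Denef--Pas proof closes this gap with a second induction, on $\deg_z f_i$, passing through the derivative $f_i'$: one first cell-decomposes adapted to $f_i'$ (of lower degree) and then uses those centres together with a Newton-polygon analysis to build cells adapted to $f_i$. That passage through the derivative is precisely the step your sketch omits, and it is the ``technical heart'' you correctly flag at the end but do not supply.
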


\begin{cor}\label{cell}
Theorem \ref{13} still holds, if we replace $\Def_{K}$ by $\Def_{\cL_{\cO}}$.
\end{cor}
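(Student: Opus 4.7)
The plan is to observe that the proof of Theorem~\ref{13} in \cite{02} relies on exactly two model-theoretic inputs: the Denef--Pas quantifier elimination in the valued field sort, and Presburger-style cell decomposition in the value group sort. Both remain available in the language $\cL_{\cO}$. Indeed, $\cL_{\cO}$ is obtained from $\cL_{\text{DP},K}$ by restricting the constant symbols to $\cO[[t]]$ (for $\VF$) and $\cO$ (for $\RF$), and Corollary~\ref{QE} applied to the subring $S=\cO[[t]]\cup\cO$ yields quantifier elimination in the valued field sort with the polynomials $f_{1},\ldots,f_{k}$ taken in $\cO[[t]][X]$ instead of $\ZZ[X]$. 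The $\VG$-sort is unaffected, so Presburger cell decomposition is unchanged.

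Given this, I would proceed as follows. Let $X\subset S[1,0,0]$ be a definable subassignment with $S\in\Def_{\cL_{\cO}}$, defined by some $\cL_{\cO}$-formula $\phi$. Applying the $\cL_{\cO}$-version of Theorem~\ref{08}, $\phi$ is equivalent to a finite disjunction of formulas of the shape
\[
\psi\bigl(\ac f_{1}(y,z),\dots,\ac f_{k}(y,z),\xi\bigr)\wedge\vartheta\bigl(\ord f_{1}(y,z),\dots,\ord f_{k}(y,z),\alpha\bigr),
\]
with $f_{j}\in\cO[[t]][Y,Z]$, $\psi$ an $\cL_{\text{ring}}$-formula and $\vartheta$ an $\cL_{\text{oag}}$-formula. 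The cell decomposition algorithm of \cite[\S 7]{02} now inspects the one-variable polynomials $f_{j}(y,z)$ in $z$ to construct, step by step, the basis $C$, the center $c$, the order $\alpha$ and the angular component $\xi$ of each cell. Since these data are built out of the $f_{j}$ together with the $\cL_{\text{ring}}$- and $\cL_{\text{oag}}$-formulas $\psi$, $\vartheta$, they are manifestly $\cL_{\cO}$-definable. This gives Part~(1).

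For Part~(2), any $\varphi\in\cC(X,\cL_{\cO})$ is by definition a finite sum of products of classes $[Y\to X]$ with $Y\to X$ an $\cL_{\cO}$-morphism, together with Presburger functions $\widehat{\alpha}$ and $\LL^{\widehat{\beta}}$ for $\cL_{\cO}$-definable $\alpha,\beta$. Starting from the partition produced in Part~(1) and refining it so that each such factor pulls back from the basis — again using only cuts defined from polynomials over $\cO[[t]]$ and the given Presburger data, exactly as in the original argument — yields the required partition.

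The main obstacle is purely one of bookkeeping: one must check that every auxiliary formula introduced along the course of the Cluckers--Loeser proof uses only constants drawn from $\cO[[t]]$ and $\cO$. This is immediate as long as one invokes the restricted quantifier elimination of Corollary~\ref{QE} (rather than its version over $\ZZ[X]$) each time the argument passes through elimination in the $\VF$-sort. No new ideas are needed.
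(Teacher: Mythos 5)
Your proposal is correct and takes essentially the same route as the paper, which simply observes that the proof of Theorem~\ref{13} goes through verbatim after replacing $\cL_{\mathrm{DP},K}$ by the sublanguage $\cL_{\cO}$. You spell out the two model-theoretic inputs (the $\cL_{\cO}$-version of quantifier elimination via Corollary~\ref{QE} and the unchanged Presburger cell decomposition in the value-group sort), which is a more detailed account of the same argument; the only minor imprecision is that $S$ in Corollary~\ref{QE} is a subring of the valued field, so the residue-field constants from $\cO$ are handled separately (harmlessly, since they only enter the $\cL_{\mathrm{ring}}$-formulas $\psi$), rather than by writing $S=\cO[[t]]\cup\cO$.
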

\begin{proof}
The proof is the same as the proof of Theorem \ref{13}, but we replace $\cL_{\textnormal{DP,K}}$ by $\cL_{\cO}\subset \cL_{\textnormal{DP,K}}$.
\end{proof}

\subsection{Proof of the main theorem}

We will give a proof of the Main Theorem \ref{*} by splitting the exponential sum $E_{m,p}^0(f)$ into three subsums.
\begin{align*}
&E_{p,m}^0(f) = \frac{1}{p^{nm}} \sum_{\substack{\overline{x} \in (p\ZZ/p^m\ZZ)^n,\\ \ord_p(f(x)) \leq m-2}} \exp\left(\frac{2 \pi i}{p^m} f(x)\right) +\\
&\frac{1}{p^{nm}}\sum_{\substack{\overline{x} \in (p\ZZ/p^m\ZZ)^n,\\ \ord_p(f(x)) = m-1}} \exp\left(\frac{2 \pi i}{p^m} f(x)\right) + \frac{1}{p^{nm}}\sum_{\substack{\overline{x} \in (p\ZZ/p^m\ZZ)^n,\\ \ord_p(f(x)) \geq m}} \exp\left(\frac{2 \pi i}{p^m} f(x)\right).
\end{align*}
In three different lemmas we will analyse each of these sums.

For the first subsum we will introduce a constructible function $G$, that expresses, for a certain input $z \in \ZZ_p$ with $\ord_p(z) \leq m-2$, how many $x \in (p\ZZ_p)^n$ are mapped close to $z$ by $f$. We will apply the Cell Decomposition Theorem to $G$ and with some further techniques like eliminiation of quantifiers, we will show that certain values $z$ of $f$ occur equally often. In the exponential sum these values will cancel out.

\begin{lem}\label{14}
Let $f\in\ZZ[x_{1},\ldots,x_{n}]$ be a non-constant polynomial such that $f(0)=0$. There exists $N\in\NN$ such that, for all $m\geq 1$ and all prime numbers $p>N$, we have
\[
\sum_{\substack{\overline{x}\in(p\ZZ/p^{m}\ZZ)^{n},\\ \ord_{p}(f(x))\leq m-2}}\exp\left(\frac{2\pi if(x)}{p^{m}}\right)=0.
\]
\end{lem}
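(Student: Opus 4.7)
The plan is to group the sum by the value of $f$ modulo $p^m$ and exploit a character-sum cancellation. Set
\[
N_{m,p}(z) := \#\bigl\{\bar x\in (p\ZZ/p^m\ZZ)^n : f(x)\equiv z \pmod{p^m}\bigr\};
\]
then the sum in the statement becomes
\[
\sum_{k=0}^{m-2}\sum_{\substack{z\in\ZZ/p^m\ZZ\\ \ord_p(z)=k}} N_{m,p}(z)\exp(2\pi i z/p^m).
\]
For each fixed $k\le m-2$ and $a\in\FF_p^\times$, write $z=p^k(a+pw)$ with $w\in\ZZ/p^{m-k-1}\ZZ$; the inner sum becomes
\[
\exp(2\pi i a/p^{m-k})\sum_{w=0}^{p^{m-k-1}-1} N_{m,p}(p^k(a+pw))\exp(2\pi i w/p^{m-k-1}).
\]
Since $\sum_w \exp(2\pi i w/p^{m-k-1})=0$ whenever $m-k-1\ge 1$, the lemma reduces to showing that $N_{m,p}(p^k(a+pw))$ is independent of $w$ for all $p$ large enough.

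To establish this independence I would realise $p^{-nm}N_{m,p}(z)$ as the specialisation at $F=\QQ_p$ of an $\cL_{\cO}$-constructible motivic function $G\in\cC(\varpi\cO_F\times h[0,0,1],\cL_{\cO})$ given informally by
\[
G(z,m) = \mu\bigl(\{x\in(\varpi\cO_F)^n:f(x)\equiv z\pmod{\varpi^m}\}\bigr),
\]
and then apply Corollary \ref{cell}. On each cell $G$ is pulled back from a function on a basis in residue-field and value-group variables, so $G(z)$ is controlled by conditions of the form $\ord(z-c(y))=\alpha(y)$ and $\ac(z-c(y))=\xi(y)$ for $\cL_{\cO}$-definable centers $c(y)$ and parameters $y\in h[0,s,r]$.

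The main obstacle is to show that, on the locus $\{\ord(z)=k\le m-2\}$, no such cell can distinguish two values of $z$ sharing the same pair $(\ord(z),\ac(z))$. A priori a center $c(y)$ with $\ord(c(y))\ge k$ and $\ac(c(y))=a$ could resolve the second $p$-adic digit of $z$. To rule this out I would invoke the Lubin-Tate-style idea advertised in the introduction: the $\cL_{\cO}$-definition of $G$ does not canonically fix a uniformiser, so substituting $\varpi\mapsto\eta\varpi$ for $\eta\in\mu_{p-1}(\QQ_p)$ produces the same motivic function, and the corresponding action on cells forces an invariance on the surviving centers $c(y)$. Combined cell-by-cell with Pas's quantifier elimination (Theorem \ref{08} and Corollary \ref{QE}), this invariance restricts the relevant centers to those of $\ord\le k-1$ or to $c(y)=0$, both of which render the cell condition a function of $(\ord(z),\ac(z))$ alone. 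Once this is in place, the character-sum cancellation from the first paragraph yields the lemma; the threshold $N$ in the statement absorbs the exceptional primes arising from the compactness step that transfers the quantifier-elimination identities to $\cF_{\cO,N}$.
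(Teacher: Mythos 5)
The high-level ingredients here match the paper's proof: a character-sum cancellation, realising the fibre count as a constructible motivic function $G$, applying cell decomposition, and exploiting the freedom to vary the uniformiser. But there is a genuine gap in the reduction and in the step that tries to close it.

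You reduce the lemma to the statement that $N_{m,p}(z)$ depends only on the pair $(\ord_p(z),\ac_p(z))$, i.e., is constant across the whole set $\{z:\ord_p(z)=k,\ \ac_p(z)=a\}$. That is a \emph{stronger} invariance than the paper establishes, and your argument for it does not go through. The paper's proof instead shows the weaker statement that $G(\cdot,m)$ is constant on each ball of radius $m-1$ contained in $\{1\leq\ord_p(z)\leq m-2\}$; since summing $\exp(2\pi i z/p^m)$ over any such ball already gives $0$ (one sums a full set of $p$-th roots of unity), this weaker constancy suffices. Your coarser grouping by $(\ord,\ac)$ forces you to control the second $p$-adic digit across a much larger set, and this is precisely what is not available.

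Concretely, the claim that the uniformiser action ``restricts the relevant centers to those of $\ord\leq k-1$ or to $c(y)=0$'' is unjustified and, as stated, false. The centres $c(y)$ produced by Corollary~\ref{cell}, via Pas quantifier elimination over $\cL_\cO$, are roots of polynomials with coefficients in $\ZZ[[t]]$; a centre such as $c(y)=t$, interpreted as $\varpi_p$, has valuation $1$ and does not vanish under changing $\varpi$. The role of varying $\varpi$ in the paper's proof is different: one shows that for each fixed $z$ there is \emph{some} uniformiser $\varpi_p$ for which $z$ lies at distance $<m-1$ from every centre with $1\le\ord\le m-2$, and for that choice the cell containing $z$ has fibre of radius $<m-1$, so it contains $B(z,m-1)$ and $G$ is constant there. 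This ``per-point choice of uniformiser'' gives constancy of $G$ on each $B(z,m-1)$, but it does not — and cannot, without a further argument — glue to constancy across distinct residues mod $p^{m-1}$ sharing the same $(\ord,\ac)$. Unless you prove that stronger invariance independently, your reduction is not established; the fix is to replace it by the finer grouping by residue mod $p^{m-1}$, which is exactly what the paper does.
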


\begin{proof} The statement is obvious when $m=1$ or $m=2$, so we can assume that $m>2$. Let $\phi$ be the $\cL_{\ZZ}$-formula given by 
\[
\phi(x_{1},\ldots,x_{n},z,m)=\bigwedge_{i=1}^{n}(\ord(x_{i})\geq 1)\wedge(\ord(z)\leq m-2)\wedge(\ord(z-f(x_{1},\ldots,x_{n}))\geq m),
\]
where $x_{i},z$ are in the valued field-sort and $m$ is in the value group-sort. To shorten notation we set $x=(x_{1},\ldots,x_{n})$. For each prime $p$, we fix a uniformiser $\varpi_{p}$ of $\QQ_{p}$, then $\phi$ defines, for each $p$, a definable set $X_{p}\subset(p\ZZ_{p})^{n}\times\ZZ_{p}\times\ZZ$. More precisely, we have
\[
X_{p}=\{(x,z,m)\in (p\ZZ_{p})^{n}\times\ZZ_{p}\times\ZZ  \mid \ord_{p}(f(x)-z)\geq m, \ord_p(z)\leq m-2\}.
\]
It is obvious that $X_{p}$ does not depend on $\varpi_{p}$.

We denote by $X\subset h[n+1,0,1]$ the definable subassignment defined by $\phi$. Let $F:=\textbf{1}_{X}\in I_{h[0,0,1]}(h[n+1,0,1])$ and $\pi$ the projection from $h[n+1,0,1]$ to $h[1,0,1]$. Then we have $G:=\pi_{!}(F)\in I_{h[0,0,1]}(h[1,0,1])$. For each prime $p$ and each uniformiser $\varpi_{p}$ of $\QQ_{p}$, there exist the following interpretations of $F$ and $G$ in $\QQ_{p}$:
\[
F_{\varpi_{p}}=\textbf{1}_{X_{p}}
\]
and 
\[
G_{\varpi_{p}}(z,m)=\int_{X_{p,z,m}}|dx|=p^{-mn}\#\{\overline{x}\in (p\ZZ/p^{m}\ZZ)^{n} \mid f(x)\equiv z\mbox{ mod }p^{m}\},
\]
if $\ord_{p}(z)\leq m-2$, where $X_{p,z,m}$ is the fiber of $X_{p}$ over $(z,m)$, and
\[
G_{\varpi_{p}}(z,m)=0,
\]
if $\ord_p(z)\geq m-1$. We can  see that both $F_{\varpi_{p}}(x,z,m)$ and 
$G_{\varpi_{p}}(z,m) $ do not depend on $\varpi_{p}$.\\

Now we use Corollary \ref{cell} for $G\in I_{h[0,0,1]}(h[1,0,1])$. This means that there exists a finite partition of $h[1,0,1]$ into cells $Z_{i}$ (for $i$ in some finite set $I$) with presentation $(\lambda_{i},Z_{C_{i},\alpha_{i},\xi_{i},c_{i}})$, such that $G|_{Z_{i}} = \lambda_{i}^{*} p_{i}^{*}(G_{i})$ with $G_{i}\in \cC(C_{i})$ and $p_{i}:Z_{C_{i},\alpha_{i},\xi_{i},c_{i}}\rightarrow C_{i}$ the projection. Note that $C_{i}\subset h[0,r_{i},s_{i}+1]$ for some $r_{i},s_{i}\in\NN$. We denote by $\theta_{i}(z,\eta,\gamma,m)$ the $\cL_{\ZZ}$-formula defining $c_{i}$, where $z \in h[1,0,0], \eta\in h[0,r_{i},0],\gamma\in h[0,0,s_{i}]$ and $m\in h[0,0,1]$. By elimination of quantifiers (Corollary \ref{QE}), there exist  polynomials $f_{1},\ldots,f_{r}$ in one variable $z$ with coefficients in $\ZZ[[t]]$, such that $\theta_{i}(z, \eta, \gamma,m)$ is equivalent to the formula
\[
\bigvee_{j}\Big(\zeta_{ij}\big(\ac f_{1}(z),\ldots,\ac f_{r}(z),\eta\big)\wedge\nu_{ij}\big(\ord f_{1}(z),\ldots,\ord f_{r}(z)\big),\gamma,m\Big),
\]
where  $\zeta_{ij}$ is an $\cL_{\rm ring}$-formula and $\nu_{ij}$ an $\cL_{\rm oag}$-formula. Since $c_{i}$ is a function, we know that, for each $(\eta,\gamma,m)\in C_{i}$, there exists a unique $z=c_{i}(\eta,\gamma,m)$ such that $\theta_{i}(z,\eta,\gamma,m)$ is true. We claim now that there exists $1\leq l\leq r$ such that $f_{l}(z)=0$. Indeed, if $f_{l}(z)\neq 0$, for all $l$, then there exists a small open neighborhood $V$ of $z$ and there exists an index $j$, such that, for all $y\in V$, $(y,\eta, \gamma,m)$ will satisfy the formulas $\zeta_{ij},\eta_{ij}$. Since this would contradict the uniqueness of $z$, we must have that $f_{l}(z)=0$ for some $l$. We deduce that $A:=\{c_{i}(\eta,\gamma, m) \in h[1,0,0]\mid i\in I,(\eta,\gamma,m)\in C_{i}\}\subset\cup_{j=1}^{r}Z(f_{j})$.\\

From the definition of $G_{i}$ we see that, if we fix $(\eta,\gamma,m)\in C_{i}$, then $G(\cdot,m)$ will be constant on the ball
\[
{\{y \in h[1,0,0] \mid \ac(y-c_{i}(\eta,\gamma,m))=\xi_{i}(\eta,\gamma,m),\ord(y-c_{i}(\eta,\gamma,m))=\alpha_{i}(\eta,\gamma,m)\}}.
\]
Now, for each $m>2$, we set
\[
B_{m}:=A\cap\{z\in h[1,0,0] \mid m-2\geq \ord(z)\geq 1\}
\]
and
\[
U_{m}:=\{y\in h[1,0,0]\mid\ord(z-y) < m-1, \forall z\in B_{m}\}.
\]
So $U_{m}$ will be a union of balls of radius $m-1$. Because $f(0)=0$, we can see that $G(\cdot,m)$ will be zero on the set $\{z\in h[1,0,0] \mid \ord(z)\leq 0\}$, if $m>2$.

\begin{claim}\label{claim1}
If $m>2$, $\ord(z)\geq 1$ and $z\in U_{m}$, then $G(\cdot,m)$ will be constant on the ball $B(z,m-1)$ (the ball with center $z$ and radius $m-1$).
\end{claim}
From the cell decomposition of $h[1,0,1]$, we know that there exist $i\in I$ and $(\eta,\gamma) \in h[0,r_i,s_i]$, such that $(z,\eta,\gamma, m) \in Z_{C_i, \alpha_i, \xi_i, c_i}$. Hence $(\eta,\gamma,m)\in C_{i}$ and $z$ belongs to the ball
\[
{B = \{y \in h[1,0,0]\mid \ac(y-c_{i}(\eta,\gamma,m))=\xi_{i}(\eta,\gamma,m),\ord(y-c_{i}(\eta,\gamma,m))=\alpha_{i}(\eta,\gamma,m)\}}.
\]
We will distinguish three cases, depending on the value of $c_i(\eta,\gamma, m)$. First of all, if $c_{i}(\eta,\gamma,m)\in B_{m}$, then we see that $\alpha_{i}(\eta,\gamma,m)  = \ord(z - c_i(\eta,\gamma, m)) <m-1$. Therefore the ball $B$ will contain the ball $B(z,m-1)$, thus $G(\cdot,m)$ will be constant on $B(z,m-1)$. Second of all, if  $\ord(c_{i}(\eta,\gamma,m))\leq 0$, and since $\ord(z)\geq 1$, we have $\alpha_{i}(\eta,\gamma,m)\leq 0< m-1$ so we have the same situation as above. Thirdly, if $\ord(c_{i}(\eta,\gamma,m))\geq m-1$, then the case $\alpha_i(\eta,\gamma,m) < m-1$ has already been treated above. Hence we can assume that $\alpha_i(\eta,\gamma,m) \geq m-1$, in which case we have $B(z,m-1)=B(0,m-1)$. By definition of $G$ we have $G(\cdot,m)|_{B(0,m-1)}=0$. This proves the claim.\\

Now there exists $N_{0}\in \NN$, independent of $m>2$, for which we can interpret all of the above discussion in $\QQ_{p}$, with any choice of uniformiser $\varpi_{p}\in\ZZ_{p}$ and for any $p>N_{0}$, by applying the map $\lambda_{\varpi_p}$ to the coefficients of the polynomials $f_1, \ldots, f_r$. Because $U_{m,\varpi_{p}}$ is an $\{m,\varpi_{p}\}$-definable set in the language $\cL_{\text{DP}}$, it can vary when changing $\varpi_p$. This suggests us to set $\cU_{m,p}:=\cup_{\varpi_{p}}U_{m,\varpi_{p}}$ with $\varpi_{p}$ running over the set of all uniformisers of $\QQ_{p}$. Then $\cU_{m,p}$ is  given by an $\cL_{\text{DP}}$-formula.

\begin{claim}
There exists $N \in \NN$, such that $\cU_{m,p}=\QQ_{p}$, for all $m>2$ and for all $p>N$.
\end{claim}
From the definition of $U_{m,\varpi_{p}}$ we see that $\cV_{m,p}:=\QQ_{p}\setminus \cU_{m,p}$ is a union of $d_{m,p}$ balls of radius $m-1$, contained in $p\ZZ_{p}$, where $d_{m,p}\leq\sum_{j=1}^{r}\deg f_{j}$. Moreover, $\cV_{m,p}$ will given by a $\cL_{\text{DP}}$-formula. We use elimination of quantifiers (Theorem \ref{08}) for the formula defining $\cV_{m,p}$. Hence there exist polynomials $q_{1},\ldots,q_{\tilde{r}}$ of one variable $z$ with coefficients in $\ZZ$ and formulas $\varphi_{j}$ in $\cL_{\rm ring}$ and $\nu_{j}$ in $\cL_{\rm oag}$, for $1\leq j\leq s$, such that
\[
z\in\cV_{m,p}\Leftrightarrow \bigvee_{j=1}^{s}\varphi_{j}\big(\ac_{\varpi_{p}}(q_{1}(z)),\ldots,\ac_{\varpi_{p}}(q_{\tilde{r}}(z))\big)\wedge\nu_{j}\big(\ord_{p}(q_{1}(z)),\ldots,\ord_{p}(q_{\tilde{r}}(z)),m\big),
\]
for any $p >N_0$ (after enlarging $N_0$ if necessary) and any uniformiser $\varpi_{p}$.\\

 We note that if $z\in\cV_{m,p}$, then $\ord_{p}(z)\geq 1$. Since $q_{i}$ has coefficients in $\ZZ$, we can assume, by possibly enlarging $N_0$, that $\ac_{\varpi_{p}}(q_{i}(z))$ only depends on $\ac_{\varpi_{p}}(z)$ and $\ord_{p}(q_{i}(z))$ only depends on $\ord_{p}(z)$, for any $p>N_{0}$. This follows from the $t$-adic version of this statement by a compactness argument. So if $z_{1}$ and $z_{2}$ satisfy that
\begin{itemize}
\item $\ord_{p}(z_{1})=\ord_{p}(z_{2})\geq 1$,
\item there exist two uniformisers $\varpi_{1,p}$ and $\varpi_{2,p}$, such that $\ac_{\varpi_{1,p}}(z_{1})=\ac_{\varpi_{2,p}}(z_{2})$,
\end{itemize}
then we see that $z_{1}\in \cV_{m,p}$ if and only if $z_{2}\in\cV_{m,p}$. It implies that $\overline{\cV}_{m,p}:= \ac_{\varpi_{p}}(\cV_{m,p})$ does not depend on $\varpi_{p}$, for any $p > N_0$. In particular, since $B(0,m-1)\nsubseteq \cV_{m,p}$, we see that the number of elements in $\overline{\cV}_{m,p}$ is at most $\sum_{j=1}^{r}\deg f_{j}$, for all $p>N_{0}$.\\

In what follows we will show that if $\cV_{m,p}$ were not empty, then the set $\overline{\cV}_{m,p}$ would grow with $p$. This will give the desired contradiction. We set
\[
B_{\infty}=A\cap\{z\in h[1,0,0] \mid \infty>\ord(z)\geq 1\}\subset\cup_{j=1}^{r}Z(f_{j}),
\]
thus $B_{\infty}$ is a finite set with $0\notin B_{\infty}$ and  $B_{m}\subset B_{\infty}$ for all $m>2$. Looking at the order of the coefficients of $f_{j}$ we see that there exists $M\in\NN$ such that $\ord_{p}(z)\leq M$ for all $z\in Z(f_{j,\varpi_{p}})\backslash\{0\}$, for all $1\leq j\leq r$, for all $p>N_{0}$ and for all uniformiser $\varpi_{p}$. So $\ord_{p}(z)\leq M$ for all $z\in B_{\infty,{\varpi_p}}$, for all $\varpi_{p}$. It follows that $\ord_{p}(z)\leq M$ for all $z\in\cV_{m,p}$, for all $m>2$ and $p>N_{0}$. Indeed, since $B(0,m-1)\nsubseteq \cV_{m,p}$ we have $\ord_{p}(z)<m-1$ for all $z\in\cV_{m,p}$, so it is true if $m-1\leq M$. On the other hand, if $m-1>M$, then for each $z\in\cV_{m,p}$ and each uniformiser $\varpi_{p}$, there exists  $z_{0}\in B_{\infty,{\varpi_p}}$ such that $\ord_{p}(z-z_{0})\geq m-1>M\geq \ord_{p}(z_{0})$, thus $\ord_{p}(z) = \ord_p(z_0) \leq M$. Now put $N :=\max\{N_{0},1+M\sum_{j=1}^{r}\deg f_{j}\}$. Suppose for a contradiction, that for some $p > N$, there exists $z \in \cV_{p,m}$. Then $\ac_{\varpi_{p}}(z)\in \overline{\cV}_{m,p}$, for every uniformiser $\varpi_{p}$, and so $\{\ac_{\varpi_{p}}(z) \mid \ord_{p}(\varpi_{p})=1\}\subset \overline{\cV}_{m,p}$. Suppose that $\ac_{p}(\varpi_{p})=u$, then $u^{\ord_{p}(z)}\ac_{\varpi_{p}}(z)=\ac_{p}(z)$, so we have $\{\ac_{\varpi_{p}}(z) \mid \ord_{p}(\varpi_{p})=1\}=\{u^{-\ord_{p}(z)}\ac_{p}(z) \mid u\in\FF_{p}^{\times}\}$. Therefore $\#\{u^{-\ord_{p}(z)}\ac_{p}(z) \mid u\in\FF_{p}^{\times}\}\leq \sum_{j=1}^{r}\deg f_{j}$. But
\[
\#\{u^{-\ord_{p}(z)}\ac_{p}(z) \mid u\in\FF_{p}^{\times}\}=\dfrac{p-1}{\gcd(\ord_{p}(z),p-1)}\geq\dfrac{p-1}{\ord_{p}(z)}\geq\frac{p-1}{M},
\]
 where $\gcd(a,b)$ is the greatest common divisor of $a$ and $b$. Then we have $p-1\leq M\sum_{j=1}^{r}\deg f_{j}\leq N-1$. This is a contradiction, since $p>N$. So this proves the claim.\\

We know from Claim \ref{claim1} that if $m>2$ and $z\in U_{m,\varpi_{p}}$ such that $1 \leq \ord_{p}(z)\leq m-2$, then $G_{\varpi_{p}}(.,m)$ will be constant on the ball $B(z,m-1)$. Thus we have
\[
\#\{\overline{x}\in (p\ZZ/p^{m}\ZZ)^{n} \mid f(x)\equiv y\text{ mod }p^{m}\}=\#\{\overline{x}\in (p\ZZ/p^{m}\ZZ)^{n} \mid f(x)\equiv z\text{ mod }p^{m}\}
\]
for all $\overline{y}\in p\ZZ/p^{m}\ZZ$ with  $y\equiv z\text{ mod }p^{m-1}$. Hence 
\[
\sum_{\substack{\overline{x}\in (p\ZZ/p^{m}\ZZ)^{n},\\f(x)\equiv z \text{ mod } p^{m-1}}}p^{-mn}\exp\Big(\frac{2\pi if(x)}{p^{m}}\Big)= G(z,m) \cdot \sum_{\substack{\overline{y} \in p\ZZ/p^m\ZZ,\\ y \equiv z \text{ mod } p^{m-1}}} \exp\Big(\frac{2 \pi i y}{p^m}\Big) = 0.
\]
This implies that
\[
\sum_{\substack{\overline{x}\in (p\ZZ/p^{m}\ZZ)^{n},\\\overline{f(x)}\in \overline{\cU}_{m,p}}}p^{-mn}\exp\Big(\frac{2\pi if(x)}{p^{m}}\Big)=0,
\]
where $\overline{\cU}_{m,p}:=\{\overline{z} \in p\ZZ/p^{m-1}\ZZ \mid z\in \cU_{m,p},m-2\geq \ord_p(z)\geq 1\}$. For all $m>2$ and $p>N$, we have $\cU_{m,p}=\QQ_{p}$, so $\overline{\cU}_{m,p}=\{\overline{z}\in p\ZZ/p^{m-1}\ZZ \mid \ord_{p}(z)\leq m-2\}$. Therefore we have
\[
\sum_{\substack{\overline{x}\in (p\ZZ/p^{m}\ZZ)^{n},\\ \ord_{p}(f(x))\leq m-2}}p^{-mn}\exp\Big(\frac{2\pi i f(x)}{p^{m}}\Big)=0. \qedhere
\]
\end{proof}

%
%
In the proof of the following lemma we will introduce again a constructible function $G$, similar to the one from the previous proof. For this exponential sum the different values $z$ of $f$ do not cancel out completely. By using the Lang-Weil estimation (see \cite{Lawe}) and Theorem \ref{02} we obtain the following upper bound for the second subsum.

\begin{lem}\label{16}
Let $f \in \ZZ[x_1, \ldots, x_n]$ be a non-constant polynomial, such that $f(0) = 0$. Put $\sigma = \min\{c_0(f), \frac{1}{2}\}$, where $c_0(f)$ is the log-canonical threshold of $f$ at $0$. Then there exist, for each integer $m>1$, a natural number $N_{m}$ and a positive constant $D_{m}$, such that, for all $p>N_{m}$, we have
\[
\Big\lvert\sum_{\substack{\overline{x}\in (p\ZZ/p^{m}\ZZ)^{n},\\ \ord_{p}(f(x))= m-1}}p^{-mn}\exp\left(\frac{2\pi if(x)}{p^{m}}\right)\Big\rvert \leq D_{m}p^{-m\sigma}.
\]
\end{lem}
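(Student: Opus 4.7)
The plan is to exploit an additional $p^{-1/2}$-cancellation coming from Weil's bound on Gauss sums. For $\bar x \in (p\ZZ/p^m\ZZ)^n$ with $\ord_p(f(x)) = m-1$, we have $\exp(2\pi i f(x)/p^m) = \psi(\ac f(x))$, where $\psi(v) := \exp(2\pi i v/p)$ and $\ac f(x) \in \FF_p^\times$. Applying the standard Fourier inversion formula
\[
\psi(v) = \frac{1}{p-1}\sum_\chi g(\bar\chi)\chi(v), \qquad v \in \FF_p^\times,
\]
with the Gauss sum $g(\chi) := \sum_{w \in \FF_p^\times} \chi(w)\psi(w)$ satisfying $g(\chi_\textnormal{triv}) = -1$ and $|g(\chi)| = \sqrt p$ for $\chi \neq \chi_\textnormal{triv}$ (Weil's bound in one variable), we rewrite the sum as
\[
S_{m,p} := \sum_{\substack{\bar x \in (p\ZZ/p^m\ZZ)^n\\ \ord_p f(x) = m-1}} p^{-mn} e^{2\pi i f(x)/p^m} \;=\; \frac{1}{p-1}\sum_{\chi} g(\bar\chi)\, J(\chi),
\]
where $J(\chi) := \int_{\{x \in (p\ZZ_p)^n :\, \ord_p f = m-1\}} \chi(\ac f(x))\,dx$ is precisely the coefficient of $T^{m-1}$ (with $T = p^{-s}$) in Igusa's local zeta function $Z_\Phi(\mathfrak{p},\chi,s,f)$ for $\Phi = \mathbf{1}_{(p\ZZ_p)^n}$.

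By Theorem \ref{05}, applied to an embedded resolution $(Y, h)$ of $f$ of good reduction modulo $p$ for all $p$ outside a finite set, the explicit rational expression for $Z_\Phi(\mathfrak{p},\chi,s,f)$ involves only subsets $I \subset T$ with $d \mid N_i$ for each $i \in I$, where $d = \ord \chi$. For $\chi$ whose order divides no $N_i$, only $I = \emptyset$ can contribute, yielding a constant in $T$, whence $J(\chi) = 0$ for $m \geq 2$. Hence only characters whose order divides some $N_i$ contribute, and the set of such characters has cardinality bounded by some constant $M$ depending only on the numerical data of $(Y, h)$, in particular independent of $p$. For each contributing $\chi$, the triangle inequality combined with Theorem \ref{02} and the Lang-Weil estimate yields
\[
|J(\chi)| \;\leq\; \mu_p\bigl(\{x \in (p\ZZ_p)^n : \ord_p f(x) \geq m-1\}\bigr) \;\leq\; C_m\, p^{-(m-1)c_0(f)},
\]
since the codimension of $\pi_{m-1}(\textnormal{Cont}_0^{\geq m-1}(f))$ in $K^{n(m-1)}$ is at least $(m-1)c_0(f)$.

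Combining these two estimates gives
\[
|S_{m,p}| \;\leq\; \frac{1 + M\sqrt p}{p-1}\, C_m\, p^{-(m-1)c_0(f)} \;\leq\; D_m\, p^{-1/2 - (m-1)c_0(f)}
\]
for $p$ larger than some $N_m$, with $D_m$ independent of $p$. To conclude, if $c_0(f) \leq 1/2$ then $\sigma = c_0(f)$ and $-\tfrac{1}{2} - (m-1)c_0(f) \leq -m c_0(f) = -m\sigma$ (using $c_0(f) \leq \tfrac{1}{2}$); if $c_0(f) > 1/2$ then $\sigma = 1/2$ and the inequality $(m-1)c_0(f) > (m-1)/2$ gives $-\tfrac{1}{2} - (m-1)c_0(f) < -m/2 = -m\sigma$. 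The main obstacle is the use of Theorem \ref{05} to bound the number of contributing characters by a $p$-independent constant: without this control, naively summing $\sqrt p$ over all $p-1$ multiplicative characters on $\FF_p^\times$ would produce a bound strictly worse than the direct triangle inequality applied to $S_{m,p}$ itself, losing exactly the $p^{-1/2}$-factor that we need.
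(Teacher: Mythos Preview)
Your proof is correct, but it follows a route genuinely different from the paper's proof of this lemma (which sits in the model-theoretic Section~\ref{sec: model theory}). The paper argues via constructible motivic functions: it shows that the fibre-counting function $\overline{G}(\xi,m)$ is independent of the choice of uniformiser $\varpi_p$, and hence constant on orbits of the $\mu_{p-1}(\QQ_p)$-action on $p^{m-1}\ZZ_p\setminus p^m\ZZ_p$. There are $d=\gcd(m-1,p-1)$ such orbits, and on each one the exponential sum is controlled by Weil's bound for $\sum_{u\in\FF_p^\times}\exp(2\pi i u^{d}\xi_0/p)$; no resolution of singularities enters this step.

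Your argument instead performs Fourier inversion over multiplicative characters of $\FF_p^\times$ and identifies each $J(\chi)$ with a coefficient of Igusa's local zeta function. The bound on the number of contributing characters then comes from Theorem~\ref{05}, i.e.\ from an embedded resolution with good reduction. This is precisely the mechanism behind Lemma~\ref{19} in the paper's geometric Section~\ref{sec: geometry}, restricted to the single slice $\ord_p f=m-1$; your Gauss-sum factor $\sqrt{p}/(p-1)$ plays exactly the role of $|g_{\chi^{-1}}|$ there. What your approach buys is directness and the avoidance of the motivic-integration machinery; what it gives up is that the paper's model-theoretic proof of this particular lemma needs no resolution of singularities at all, which is the intended spirit of Section~\ref{sec: model theory} (the resolution is invoked only later, in the endgame using Corollary~\ref{07}).
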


\begin{proof}
Let $\phi,\overline{\phi}$ be two $\cL_{\ZZ}$-formulas given by 
\begin{align*}
&\phi(x_{1},\ldots,x_{n},z,m)=\\
&\bigwedge_{i=1}^{n}(\ord(x_{i})\geq 1)\wedge(\ord(z)=m-1)\wedge(\ord(z-f(x_{1},\ldots,x_{n}))\geq m),\\
&\overline{\phi}(x_{1},\ldots,x_{n},\xi,m)=\\
&\bigwedge_{i=1}^{n}(\ord(x_{i})\geq 1) \wedge (\ord(f(x_{1},\ldots,x_{n})=m-1)\wedge(\ac(f(x_{1},\ldots,x_{n}))=\xi),
\end{align*}
where $x_{i},z$ are in the valued field-sort, $m$ is in the valued group-sort and $\xi$ is in the residue field-sort. To shorten notation we set $x=(x_{1},\ldots,x_{n})$. For each prime $p$, we fix a uniformiser $\varpi_{p}$ of $\QQ_{p}$, then $\phi,\overline{\phi}$ define, for each $p$, two definable sets $X_{p}\subset(p\ZZ_{p})^{n}\times\ZZ_{p}\times\ZZ$  and $\overline{X}_{p}\subset(p\ZZ_{p})^{n}\times\FF_{p}\times\ZZ$ given by
\[
X_{p}=\{(x,z,m)\in (p\ZZ_{p})^{n}\times\ZZ_{p}\times\ZZ \mid \ord_p(f(x)-z)\geq m, \ord_p(z)=m-1\}
\]
and
\[
\overline{X}_{p}=\{(x,\xi,m)\in (p\ZZ_{p})^{n}\times\FF_{p}\times\ZZ \mid \ord_p(f(x))=m-1, \ac_{\varpi_{p}}(f(x))=\xi\}.
\]
It is obvious that $X_{p}$ does not depend on $\varpi_{p}$.

We denote by $X\subset h[n+1,0,1]$, resp.\  $\overline{X} \subset h[n,1,1]$, the definable subassignments defined by $\phi$, resp.\ $\overline{\phi}$. Let $F:=\textbf{1}_{X}\in I_{h[0,0,1]}(h[n+1,0,1])$ and $\pi$ the projection from $h[n+1,0,1]$ to $h[1,0,1]$. Then we have $G:=\pi_{!}(F)\in I_{h[0,0,1]}(h[1,0,1])$. For each prime $p$ and each uniformiser $\varpi_{p}$ of $\QQ_{p}$, there exist the following interpretations of $F$ and $G$ in $\QQ_{p}$.
\[
F_{\varpi_{p}}=\textbf{1}_{X_{p}}
\]
and 
\[
G_{\varpi_{p}}(z,m)=\int_{X_{p,z,m}}|dx|=p^{-mn}\#\{\overline{x}\in (p\ZZ/p^{m}\ZZ)^{n} \mid f(x)\equiv z\text{ mod }p^{m}\},
\]
if $\ord_p(z)= m-1$, where $X_{p,z,m}$ is the fiber of $X_{p}$ over $(z,m)$, and
\[
G_{\varpi_{p}}(z,m)=0,
\]
if $\ord_p(z)\neq m-1$. We can see that both $F_{\varpi_{p}}(x,z,m)$ and $G_{\varpi_{p}}(z,m) $ do not depend on $\varpi_{p}$. So we can set  $G(z,m,p):=G_{\varpi_{p}}(z,m)$. The idea is to partition $p^{m-1}\ZZ_p \backslash p^m\ZZ_p$ into sets on which $G(\cdot,m,p)$ is constant. First of all, we can see that $G(\cdot,m,p)$ is constant on balls of the form
\[
\{z\in\ZZ_{p} \mid \ord_{p}(z)=m-1,\ac_{\varpi_{p}}(z)=\xi_{0}\},
\]
with $\xi_{0}\in\FF_{p}^{\times}$.
Now we will look more closely on which of these balls $G(\cdot,m,p)$ takes the same value. In what follows we will show is that for $p$ big enough, if $\varpi_p, \varpi'_p$ are two uniformiser, then $G(\cdot, m,p)$ will be the same on the sets $\{z\in\ZZ_{p} \mid \ord_p(z)=m-1,\ac_{\varpi_{p}}(z)=\xi_{0}\}$ and $\{z\in\ZZ_{p} \mid \ord_p(z)=m-1,\ac_{\varpi'_{p}}(z)=\xi_{0}\}$. When this holds, we can see that $G$ will be constant on the orbits of an action of the group $\mu_{p-1}(\QQ_{p})$ on $\QQ_{p}$.\\

We take $\overline{F}:=\textbf{1}_{\overline{X}}\in I_{h[0,0,1]}(h[n,1,1])$ and $\overline{\pi}$ the projection from $h[n,1,1]$ to $h[0,1,1]$. Then we have $\overline{G}:=\overline{\pi}_{!}(\overline{F})\in I_{h[0,0,1]}(h[0,1,1])$. For each prime $p$ and each uniformiser $\varpi_{p}$ of $\QQ_{p}$, there exist the following interpretations of $\overline{F}$ and $\overline{G}$ in $\QQ_{p}$.
\[
\overline{F}_{\varpi_{p}}=\textbf{1}_{\overline{X}_{p}}
\]
and 
\begin{align*}
\overline{G}_{\varpi_{p}}(\xi,m)&=\int_{\overline{X}_{p,\xi,m}}|dx|\\
&=p^{-mn}\#\{\overline{x}\in (p\ZZ/p^{m}\ZZ)^{n} \mid \ord_p(f(x))= m-1,\ac_{\varpi_{p}}(f(x))=\xi\},
\end{align*}
 where $\overline{X}_{p,\xi,m}$ the fiber of $\overline{X}_{p}$ over $(\xi,m)$.

Since $\overline{G}\in I_{h[0,0,1]}(h[0,1,1])$, we can write $\overline{G}$ in  the form 
\[
\overline{G}(\xi,m)=\sum_{i\in I}n_{i}\alpha_{i}(\xi,m)\LL^{\beta_{i}(\xi,m)}[V_{i}],
\]
where $n_{i}\in \ZZ$, $\alpha_{i},\beta_{i}$ are $\cL_{\ZZ}$-definable functions from  $h[0,1,1]$ to $h[0,0,1]$ and $[V_{i}]\in K_{0}(\text{RDef}_{h[0,1,1],\cL_{\ZZ}})$.
We use elimition of quantifiers (Corollary \ref{QE}) for the formulas defining $\alpha_{i},\beta_{i}, V_{i}$, hence there exist $N\in\NN$, and $(\cL_{\text{ring}} \cup \ZZ)$-formulas $\phi_{ij},\theta_{ij},\varsigma_{ij}$ and $(\cL_{\text{oag}} \cup \ZZ)$-formulas $\eta_{ij},\nu_{ij},\tau_{ij}$, where $j\in J$, such that for all $p>N$ and all uniformiser $\varpi_{p}$, we have
\begin{align*}
\alpha_{i,\varpi_p}(\xi,m)=\eta&\Leftrightarrow\vee_{j\in J}(\phi_{ij}(\xi)\wedge\eta_{ij}(\eta,m));\\
\beta_{i,\varpi_p}(\xi,m)=\nu&\Leftrightarrow\vee_{j\in J}(\theta_{ij}(\xi)\wedge\nu_{ij}(\nu,m));\\
(\xi,m,\varsigma)\in V_{i,\varpi_{p}}&\Leftrightarrow\vee_{j\in J}(\varsigma_{ij}(\varsigma,\xi)\wedge\tau_{ij}(m)).
\end{align*}
From these formulas we can see that $\overline{G}_{\varpi_{p}}(\xi,m)$ does not depend on the uniformiser $\varpi_{p}$, so we will write $\overline{G}(\xi,m,p)$ instead of $\overline{G}_{\varpi_{p}}(\xi,m)$. But by definition of $G$ and $\overline{G}$ we can see that $G(z,m,p) = G_{\varpi_{p}}(z,m)=\overline{G}_{\varpi_{p}}(\xi,m)=\overline{G}(\xi,m,p)$, if $\ac_{\varpi_{p}}(z)=\xi$ and $\ord{_p}(z)=m-1$. Therefore, for $m>1$, $p>N$ and $\ord{_p}(z_{1})=\ord{_p}(z_{2})=m-1$, we have $G(z_{1},m,p)=G(z_{2},m,p)$, if there exist two uniformisers $\varpi_{1,p},\varpi_{2,p}$ such that $\ac_{\varpi_{1,p}}(z_{1})=\ac_{\varpi_{2,p}}(z_{2})\in\FF_{p}^{\times}$. Let $d=\gcd(m-1,p-1)$, then by the same reasoning as in Lemma \ref{14} we have that $G(\cdot,m,p)$ will be constant on the sets
\[
\Big\{z \mid \ord{_p}(z)=\ord{_p}(z_{0})=m-1\wedge\ac_{p}\Big(\dfrac{z}{z_{0}}\Big)^{\frac{p-1}{d}}=1\Big\},
\]
for any $z_{0}\in \ZZ_p$ with $\ord{_p}(z_{0})=m-1$. So we can decompose $p^{m-1}\ZZ_p \backslash p^m\ZZ_p$ into $d$ of these sets, each of them will consist of $\frac{p-1}{d}$ disjoint balls of volume $p^{-m}$ and $G(\cdot,m,p)$ will be constant on these sets. We denote  these sets by $Y_{1},\ldots,Y_{d}$ and the values of $G(\cdot,m,p)$ on these sets by $G_{1},\ldots,G_{d}$ respectively. We remark that if $\ord_{p}(z)=m-1$, then 
\[
\exp\Big(\frac{2\pi iz}{p^{m}}\Big)=\exp\Big(\frac{2\pi i \ \ac_{p}(z)}{p}\Big),
\]
so 
\begin{align*}
\Big|\sum_{\overline{y}\in Y_{i}/p^{m}\ZZ_p}\exp\Big(\frac{2\pi iy}{p^{m}}\Big)\Big|
&=\Big|\sum_{\xi\in\ac_p(Y_{i})}\exp\Big(\frac{2\pi i\xi}{p}\Big)\Big|\\
&=\Big|\sum_{u\in\FF_{p}^{\times}}\exp\Big(\frac{2\pi iu^{d}\xi_{0}}{p}\Big)\Big|\\
\end{align*}
for any $\xi_{0}\in\ac(Y_{i})$. By the last result from \cite{Weil} we have 
\[
\Big|\sum_{u\in\FF_{p}^{\times}}\exp\Big(\frac{2\pi iu^{d}\xi_{0}}{p}\Big)\Big| = \Big|\sum_{u\in\FF_{p}}\exp\Big(\frac{2\pi iu^{d}\xi_{0}}{p}\Big) - 1 \Big| \leq (d-1)p^{\frac{1}{2}} + 1 \leq dp^{\frac{1}{2}},
\]
hence
\begin{align*}
&\Big|\sum_{\substack{\overline{x}\in (p\ZZ/p^{m}\ZZ)^{n},\\ \ord_{p}(f(\overline{x}))= m-1}}p^{-mn}\exp\Big(\frac{2\pi i f(x)}{p^{m}}\Big)\Big|=\\
&\Big|\sum_{0\neq \overline{z} \in p^{m-1}\ZZ_{p}/p^{m}\ZZ_{p}}G(z,m,p)\exp\Big(\frac{2\pi iz}{p^{m}}\Big)\Big|=\\
&\Big|\sum_{i=1}^{d}G_{i}\sum_{\overline{y}\in Y_{i}/p^m\ZZ_p}\exp\Big(\frac{2\pi iy}{p^{m}}\Big)\Big| \leq \Big|\sum_{i=1}^{d}G_{i}dp^{\frac{1}{2}}\Big|.
\end{align*}
We also have
\begin{align*}
\sum_{i=1}^{d}\dfrac{p-1}{d}G_{i}&=\sum_{0\neq \overline{z} \in p^{m-1}\ZZ_{p}/p^{m}\ZZ_{p}}G(z,m,p)\\
&=p^{-mn}\#\{\overline{x}\in (p\ZZ/p^{m}\ZZ)^{n} \mid \ord_{p}(f(x))=m-1\}\\
&=p^{-mn}\#A_{p,m}
\end{align*}
where $A_{p,m}:=\{\overline{x}\in (p\ZZ_{p}/p^{m}\ZZ_{p})^{n} \mid \ord_{p}(f(x))=m-1\}$. When we view $A_{p,m}$ as a subvariety of $\FF_{p}^{mn}$, then, by the Lang-Weil estimation (see \cite{Lawe}), there exists a constant $D'_{m}$, not depending on $p$, such that 
\[
\#A_{p,m}=D'_{m}p^{\dim_{\FF_{p}}(A_{p,m})}+O(p^{\dim_{\FF_{p}}(A_{p,m})-\frac{1}{2}}).
\]
By Theorem \ref{02} we have 
\[
c_{0}(f)\leq \dfrac{(m-1)n-\dim_{\FF_{p}}(\tilde{A}_{p,m})}{m-1},
\]
where $\tilde{A}_{p,m}$ is the image of $A_{p,m}$ under the projection $\pi_{m}:(\ZZ_{p}/p^{m}\ZZ_{p})^{n}\rightarrow (\ZZ_{p}/p^{m-1}\ZZ_{p})^{n}$, viewed as a subvariety of $\FF_{p}^{mn-n}$. Then we have
\[
\dim_{\FF_{p}}(A_{m,p})\leq n+ \dim_{\FF_{p}}(\tilde{A}_{m,p})\leq mn-(m-1)c_{0}(f).
\]
And now we finish the proof by showing that for all $p$ big enough,
\begin{align*}
\Big|\sum_{\substack{\overline{x}\in (p\ZZ/p^{m}\ZZ)^{n},\\\ord_{p}(f(x))= m-1}}p^{-mn}\exp\Big(\frac{2\pi if(x)}{p^{m}}\Big)\Big|&\leq \Big|\sum_{i=1}^{d}G_{i}dp^{\frac{1}{2}}\Big|\\
&=d^2\frac{p^{-mn+\frac{1}{2}}}{p-1}\#A_{p,m}\\
&\leq 2d^2p^{-mn-\frac{1}{2}} D'_m p^{mn-(m-1)c_{0}(f)}\\
&\leq D_mp^{-m\sigma},
\end{align*}
because $\sigma=\min\big\{\frac{1}{2},c_{0}(f)\big\}$. Here $D_m = 2(m-1)^2D'_{m}$.
\end{proof}

The last subsum can be easily estimated by use of the Lang-Weil estimation (see \cite{Lawe}) and Theorem \ref{02}.

\begin{lem}\label{17}
Let $f \in \ZZ[x_1, \ldots, x_n]$ be a non-constant polynomial, such that $f(0) = 0$. Put $\sigma = \min\{c_0(f), \frac{1}{2}\}$, where $c_0(f)$ is the log-canonical threshold of $f$ at $0$. Then there exist, for each integer $m>1$, a natural number $N_{m}$ and a positive constant $D_{m}$, such that, for all $p>N_{m}$, we have
\[
\Big|\sum_{\substack{\overline{x}\in (p\ZZ/p^{m}\ZZ)^{n},\\\ord_{p}(f(x))\geq m}}p^{-mn}\exp\Big(\frac{2\pi if(x)}{p^{m}}\Big)\Big|\leq D_{m}p^{-m\sigma}.
\]
\end{lem}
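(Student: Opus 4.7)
The plan is to exploit the fact that when $\ord_p(f(x)) \geq m$, the exponential $\exp(2\pi i f(x)/p^m)$ equals $1$, so that the sum degenerates into a pure cardinality count that can be attacked geometrically. Concretely, set
\[
B_{p,m} := \{\overline{x} \in (p\ZZ/p^{m}\ZZ)^{n} \mid \ord_p(f(x)) \geq m\}.
\]
Then $f(x)/p^m \in \ZZ_p$ for every such $x$, hence
\[
\sum_{\substack{\overline{x}\in (p\ZZ/p^{m}\ZZ)^{n},\\ \ord_p(f(x)) \geq m}} p^{-mn}\exp\!\left(\frac{2\pi i f(x)}{p^{m}}\right) = p^{-mn}\,\#B_{p,m},
\]
so it suffices to estimate $\#B_{p,m}$ from above.

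The set $B_{p,m}$ is defined, in the coordinates of $(p\ZZ/p^m\ZZ)^n \subset (\ZZ/p^m\ZZ)^n$, by polynomial congruences whose coefficients lie in $\ZZ$ and are obtained from the coefficients of $f$ alone. In other words, $B_{p,m}$ is the set of $\FF_p$-points of the reduction modulo $p$ of the arithmetic scheme $\pi_m\big(\mathrm{Cont}_0^{\geq m}(f)\big)$ (where we have replaced the base field $K$ from Theorem \ref{02} by $\ZZ$). For $p$ outside a finite set $N_m$ depending only on $m$ and on the defining equations, the dimension of this $\FF_p$-scheme coincides with the dimension of its geometric generic fiber in characteristic zero. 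By Theorem \ref{02},
\[
\dim \pi_m\big(\mathrm{Cont}_0^{\geq m}(f)\big) \leq nm - m\,c_0(f),
\]
so we obtain the same inequality for $\dim_{\FF_p} B_{p,m}$ once $p > N_m$.

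Now the Lang-Weil estimates \cite{Lawe}, applied uniformly in $p$ to this family of $\FF_p$-schemes, yield a constant $D'_m$, independent of $p$, such that
\[
\#B_{p,m} \leq D'_m\, p^{\dim_{\FF_p} B_{p,m}} \leq D'_m\, p^{nm - m c_0(f)}
\]
for all $p > N_m$ (enlarged if necessary). Combining everything,
\[
\Bigg|\sum_{\substack{\overline{x}\in (p\ZZ/p^{m}\ZZ)^{n},\\ \ord_p(f(x)) \geq m}} p^{-mn}\exp\!\left(\frac{2\pi i f(x)}{p^{m}}\right)\Bigg| = p^{-mn}\#B_{p,m} \leq D'_m\, p^{-m c_0(f)} \leq D_m\, p^{-m\sigma},
\]
where $D_m := D'_m$, since $\sigma = \min\{c_0(f), \tfrac{1}{2}\} \leq c_0(f)$.

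There is no serious obstacle here: unlike in Lemma \ref{16}, the phase factor is trivial, so no Weil bound on a one-variable character sum is needed and the factor $\tfrac12$ in $\sigma$ is used only to keep $\sigma \leq c_0(f)$. The only mildly delicate point is the identification of $B_{p,m}$ as the $\FF_p$-fiber of an arithmetic scheme with generic fiber $\pi_m(\mathrm{Cont}_0^{\geq m}(f))$, so that Theorem \ref{02} (proved in characteristic zero) can be transferred to $\FF_p$ for $p$ large, and this is a standard semicontinuity/constructibility argument, which also produces the uniform Lang-Weil constant $D'_m$.
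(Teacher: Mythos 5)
Your proof is correct and follows essentially the same path as the paper's: reduce to counting $\#B_{p,m}$, bound its $\FF_p$-dimension via Theorem \ref{02}, and apply Lang--Weil. The only difference is cosmetic — you spell out the transfer of the characteristic-zero dimension bound of $\pi_m(\mathrm{Cont}_0^{\geq m}(f))$ to its reduction mod $p$ for $p$ large, a step the paper leaves implicit.
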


\begin{proof}
If $\ord_{p}(f(x))\geq m$, then $\exp\Big(\frac{2\pi if(x)}{p^{m}}\Big)=1$ so we have 
\[
\Big|\sum_{\substack{\overline{x}\in (p\ZZ/p^{m}\ZZ)^{n},\\\ord_{p}(f(x))\geq m}}p^{-mn}\exp\Big(\frac{2\pi if(x)}{p^{m}}\Big)\Big|=p^{-mn}\#B_{p,m},
\]
where $B_{p,m}:=\{\overline{x}\in (p\ZZ_{p}/p^{m}\ZZ_{p})^{n} \mid \ord_{p}(f(x))\geq m\}$. We can view $B_{p,m}$ as a subvariety of $\FF_{p}^{mn}$. Then by the Lang-Weil estimation (see \cite{Lawe}), there exists a number $D_{m}$, which does not depend on $p$, such that 
\[
\#B_{p,m}=D_{m}p^{\dim_{\FF_{p}}(B_{m,p})}+O(p^{\dim_{\FF_{p}}(B_{m,p})-\frac{1}{2}}).
\]
By Theorem \ref{02} we have $c_{0}(f)\leq\dfrac{mn-\dim_{\FF_{p}}(B_{m,p})}{m}$, so $\dim_{\FF_{p}}(B_{m,p})\leq mn-mc_{0}(f)$.
Hence, for all $p$ big enough,
\begin{align*}
\Big|\sum_{\substack{\overline{x}\in (p\ZZ/p^{m}\ZZ)^{n},\\\ord_{p}(f(x))\geq m}}p^{-mn}\exp\Big(\frac{2\pi if(x)}{p^{m}}\Big)\Big|&\leq p^{-mn}D_{m}p^{mn-mc_{0}(f)}\\
&\leq D_{m}p^{-m\sigma}. \qedhere
\end{align*}
\end{proof}

We will now put the three lemmas together to prove one of our main theorems. The essential ingredient in this proof is the expression that was obtained in Corollary \ref{07}.

\begin{proof}[Proof of the Main Theorem \ref{*}]
From the Lemmas \ref{14}, \ref{16} and \ref{17} it follows that, for each $m>1$, there exists a natural number $N_{m}$ and a positive constant $C_{m}$, such that for all $p>N_{m}$, we have 
\begin{equation}\label{eq: lemmas combined}
|E_{m,p}^{0}(f)|\leq C_{m}p^{-\sigma m}.
\end{equation}
By Corollary \ref{07} (with $\text{Supp}(\overline{\Phi})=\{0\}$), there exist constants $s, M',N' \in\NN$, and for each $1\leq i\leq s$, there exist constants $\beta_{i}\in \NN$, $\lambda_{i}\in\QQ$  and a definable set $A_{i}\subset\NN$ in the Presburger language $\mathcal{L}_{\text{Pres}}$, such that for all $p>N'$ and for all $1\leq i\leq s$, there exists $a_{i,p}\in\CC$ for which the formula
\[
E_{m,p}^{0}(f)=\sum_{i=1}^{s}a_{i,p}m^{\beta_{i}}p^{-\lambda_{i}m}\11_{A_{i}}(m)
\]
holds, for all $m>M'$. Moreover from the results in Section \ref{***} we can deduce that $0\leq \beta_{i}\leq n-1$ and $c_{0}(f)\leq \lambda_{i}$ for all $1 \leq i \leq s$.
After enlarging $M'$ and removing some small elements from $A_{i}$, we can assume that, for each subset $I \subset \{1, \ldots, s\}$, the set $\cap_{i\in I} A_{i}\backslash\cup_{i\notin I} A_{i}$ is either empty or infinite. Notice that for each $m >M'$, there is a unique subset $I \subset \{1, \ldots, s\}$, such that $m \in \cap_{i\in I} A_{i}\backslash\cup_{i\notin I} A_{i}$. 

\begin{claim}\label{claim}
There exist $M_0>M', N_0>N'$ and a positive  constant $C_{0}$, such that for all $m>M_{0}, p>N_{0}$ and $1\leq i\leq s$, we have
\[
|a_{i,p}p^{-\lambda_{i}m}|\leq C_{0}p^{-\sigma m}.
\]
\end{claim}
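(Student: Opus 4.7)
My plan is to exploit the explicit expansion
\[
E_{m,p}^{0}(f)=\sum_{i=1}^{s}a_{i,p}\,m^{\beta_{i}}\,p^{-\lambda_{i}m}\11_{A_{i}}(m)
\]
together with the uniform bound $|E_{m,p}^{0}(f)|\le C_{m}p^{-\sigma m}$ of \eqref{eq: lemmas combined}, and to recover each coefficient $a_{i,p}$ separately by evaluating at several values of $m$ and inverting the resulting linear system. Since $\lambda_{i}\ge c_{0}(f)\ge\sigma$ for every $i$, the target inequality $|a_{i,p}|\le C_{0}p^{(\lambda_{i}-\sigma)m}$ is monotone non-decreasing in $m$, so it will be enough to prove $|a_{i,p}|\le C_{0}p^{(\lambda_{i}-\sigma)M_{0}}$ for a single convenient $M_{0}$ and all sufficiently large $p$.

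I would work one ``branch'' at a time. For each $I\subset\{1,\ldots,s\}$ put $B_{I}:=\bigcap_{i\in I}A_{i}\setminus\bigcup_{i\notin I}A_{i}$, which by assumption is empty or infinite; on $B_{I}$, and for $m>M'$, the expansion collapses to $\sum_{i\in I}a_{i,p}m^{\beta_{i}}p^{-\lambda_{i}m}$. Indices $i\in I$ sharing a common pair $(\beta_{i},\lambda_{i})$ are indistinguishable in this relation, so I would first group any such coincidences; the claim for each individual coefficient then follows from the bound on the finitely many grouped coefficients, up to a constant factor. Fixing a non-empty branch $I$ with $r:=|I|$ and now-distinct pairs $(\beta_{i},\lambda_{i})$, I would choose once and for all $r$ integers $M'<m_{1}<\cdots<m_{r}$ in $B_{I}$, which is possible since $B_{I}$ is infinite. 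Evaluating the identity at each $m_{k}$ produces a linear system $\vec{E}_{p}=M_{p}\vec{a}_{p}$, where $M_{p}$ is the $r\times r$ matrix with entries $m_{k}^{\beta_{i}}p^{-\lambda_{i}m_{k}}$.

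The determinant $\det M_{p}=\sum_{\tau}\mathrm{sgn}(\tau)\prod_{k}m_{k}^{\beta_{\tau(k)}}p^{-\lambda_{\tau(k)}m_{k}}$ is a finite sum of scaled powers of $p$; by the rearrangement inequality the permutation $\tau_{0}$ minimising $\sum_{k}\lambda_{\tau(k)}m_{k}$ (pair smallest $\lambda_{i}$ with largest $m_{k}$) gives the dominant term as $p\to\infty$, and its coefficient is a Vandermonde-type quantity which is non-zero because the pairs $(\beta_{i},\lambda_{i})$ are distinct. Hence there is $N_{I}>N'$ with $|\det M_{p}|$ at least half its leading magnitude for all $p>N_{I}$, and Cramer's rule then presents each $a_{i,p}$ as an explicit ratio of two such generalised Vandermonde determinants. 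Expanding the numerator along the $i$-th column and inserting $|E_{m_{k},p}^{0}(f)|\le C_{m_{k}}p^{-\sigma m_{k}}$, a direct exponent-bookkeeping yields $|a_{i,p}|\le C'_{I}\,p^{(\lambda_{i}-\sigma)m_{r}}$ for all $p>N_{I}$. Taking $M_{0}$, $N_{0}$ and $C_{0}$ to be the appropriate maxima of $m_{r}$, $N_{I}$ and $C'_{I}$ over the finitely many non-empty branches will complete the argument.

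The hard part will be this last exponent-bookkeeping: verifying that after the Cramer ratio the cofactor determinants and leading-exponent cancellations really combine to the clean exponent $(\lambda_{i}-\sigma)m_{r}$. This reduces to a bipartite-assignment problem governed by the rearrangement inequality, and I would sanity-check it first in the case $r=2$, where the two-term expression for $\det M_{p}$ makes the estimate transparent. The cases in which several $\lambda_{i}$ coincide (or several $\beta_{i}$ agree within a fixed $\lambda_{i}$-value) are the trickiest: here one must be confident that the preliminary grouping step has reduced us to genuinely distinct pairs $(\beta_{i},\lambda_{i})$, so that the dominant coefficient of $\det M_{p}$ is indeed non-zero and the estimate is uniform in~$p$.
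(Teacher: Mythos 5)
Your proposal follows essentially the same route as the paper: restrict to a branch $B_I=\bigcap_{i\in I}A_i\setminus\bigcup_{i\notin I}A_i$, evaluate the expansion at $r$ fixed integers $m_1<\cdots<m_r\in B_I$, and invert the resulting linear system by Cramer's rule while tracking the $p$-asymptotics of the determinants. The bookkeeping differs a little. You work with the unnormalized matrix $(m_k^{\beta_i}p^{-\lambda_im_k})$ and identify the dominant power of $p$ in each determinant by the rearrangement inequality, noting that the leading coefficient is a product of generalized Vandermonde determinants $\det(m_k^{\beta_i})$ and hence non-zero once the pairs $(\beta_i,\lambda_i)$ are distinct; the paper instead works with the normalized matrix $B_p=(m_j^{\beta_i}p^{(\sigma-\lambda_i)m_j})$, which makes the column with $\lambda_{i_0}=\sigma$ constant and those with $\lambda_i>\sigma$ tend to zero, and then treats the two cases $\lambda_{i_0}>\sigma$ (where a crude bound $|a_{i_0,p}|\le p^{\alpha}$ suffices after enlarging $M_0$) and $\lambda_{i_0}=\sigma$ (Laplace expansion along the constant column) separately, deferring the non-vanishing of the sub-determinants to a genericity assertion on the $m_j$. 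Your version is more explicit on the non-vanishing, and your exponent bookkeeping, if carried through, does deliver the target uniformly in both cases: writing $\Lambda$ for the minimal exponent in $\det M_p$, $\Lambda'$ for the minimal exponent after replacing $\lambda_i$ by $\sigma$ in the multiset, and $e_i:=\Lambda-\Lambda'$ for the net Cramer exponent, a short rearrangement computation gives $(\lambda_i-\sigma)m_r-e_i=\sum_{k<j}(\lambda_{(j)}-\lambda_{(k)})(m_{r+1-k}-m_{r-k})\ge 0$, so $|a_{i,p}|\lesssim p^{(\lambda_i-\sigma)m_r}$, and taking $M_0\ge m_r$ finishes.

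One step is not quite right as written. After grouping indices $i\in I$ sharing a pair $(\beta_i,\lambda_i)$, Cramer's rule only controls the grouped sums $\sum_{(\beta_i,\lambda_i)=(\beta,\lambda)}a_{i,p}$; a bound on such a sum does not bound each summand individually ``up to a constant factor'', contrary to what you assert. To recover the individual bound of the Claim you would have to observe that two such $i\ne j$ must have $A_i\ne A_j$ (otherwise the expansion is redundant), and then bound $a_{i,p}$ on a different branch where $i\in I$ but $j\notin I$. In practice this matters little, since the downstream application in the proof of the Main Theorem only ever uses the bound on the full sum $\sum_i a_{i,p}m^{\beta_i}p^{-\lambda_im}\11_{A_i}(m)$, for which the grouped bounds are enough; and the paper glides over the identical point, as $B_p$ would be singular if two indices in $I$ shared a pair $(\beta_i,\lambda_i)$. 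But as a proof of the Claim as stated, the grouping sentence is a gap you would need to close.
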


Since there are only finitely many subsets $I \subset \{1, \ldots, s\}$, it is sufficient to fix a subset $I$ and prove the claim for $m$ restricted to the set $\cap_{i\in I} A_{i}\backslash\cup_{i\notin I} A_{i}$. Without loss of generality, we can assume that $I=\{1,\ldots,r\}$. If $p>N'$, $m\in \cap_{i\in I} A_{i}\backslash\cup_{i\notin I} A_{i}$ and $m>M'$, then we have
\[
E_{m,p}^{0}(f)=\sum_{i=1}^{r}a_{i,p}m^{\beta_{i}}p^{-\lambda_{i}m}.
\]
From Equation \ref{eq: lemmas combined} we can see that, for such $m$ and for all $p>\max\{N',N_{m}\}$, we have 
\[
|E_{m,p}^{0}|=\Big|\sum_{i=1}^{r}a_{i,p}m^{\beta_{i}}p^{-\lambda_{i}m}\Big|\leq C_{m}p^{-\sigma m}.
\]
This implies that 
\[
\Big|\sum_{i=1}^{r}a_{i,p}m^{\beta_{i}}p^{(\sigma-\lambda_{i})m}\Big|\leq C_{m}.
\]
It is easy to see that there exist $m_{1},\ldots,m_{r}\in\cap_{i\in I} A_{i}\backslash\cup_{i\notin I} A_{i}$, all bigger than $M'$, and $N_{I}>\max\{N', N_{m_1}, \ldots, N_{m_r}\}$, such that all of the determinants of the size $r$ and $r-1$ submatrices of the matrix $B_{p}=(m_{j}^{\beta_{i}}p^{(\sigma-\lambda_{i})m_{j}})_{1\leq j,i\leq r}$ are different from zero for every $p>N_{I}$. We set
\begin{align*}
C_{I}&:=\max\{C_{m_{i}} \mid 1\leq i\leq r\};\\
c_{j,p}&:=\sum_{i=1}^{r}a_{i,p}m_{j}^{\beta_{i}}p^{(\sigma-\lambda_{i})m_{j}}, \text{ for }1\leq j\leq r;\\
D_{p}&:=\det(B_{p});\\
D_{k,l,p}&:=(-1)^{k+l}\det\big((m_{j}^{\beta_{i}}p^{(\sigma-\lambda_{i})m_{j}})_{j\neq k, i\neq l}\big),\text{ for }1\leq k,l \leq r.
\end{align*}
If we write $x_{p}=(a_{1,p},\ldots,a_{r,p})^{T}$ and $c_{p}=(c_{1,p},\ldots,c_{r,p})^{T}$, then $x_{p}$  is a solution of the equation $B_{p}x=c_{p}$. By our assumption on $m_{1},\ldots,m_{r}$ we see that $D_{p}\neq 0$ and $D_{k,l,p}\neq 0$ for every $1\leq k,l\leq r$ and for $p>N_{I}$. Using Cramer's rule we have 
\[
a_{i,p}=\dfrac{\sum_{j=1}^{r}c_{j,p}D_{j,i,p}}{D_{p}},
\]
for all $1\leq i\leq r$ and $p>N_{I}$. We remark that $|c_{j,p}|\leq C_{I}$, for all $p>N_{I}$, and that $\lambda_{i}\geq \sigma$, for all $1\leq i\leq r$.
This gives us
\[
|a_{i,p}|\leq \dfrac{\sum_{j=1}^{r}|c_{j,p}D_{j,i,p}|}{|D_{p}|}\leq C_{I}\dfrac{\sum_{j=1}^{r}|D_{j,i,p}|}{|D_{p}|},
\]
for all $1 \leq i \leq r$ and $p>N_I$. Then, by the definition of determinant, there exists $\alpha$, such that, for $1\leq i\leq r$ and $p>N_{I}$ we have $|a_{i,p}|\leq p^{\alpha}$. Let $1\leq i_{0}\leq r$, we will now distinguish two cases.\\
If $\lambda_{i_{0}}>\sigma$, then there exists $M_{i_{0}}>M'$ such that, for every $m>M_{i_{0}}$ and $p>N_I$ we have 
\[
|a_{i_{0},p}p^{-m\lambda_{i_{0}}}|\leq p^{\alpha-m\lambda_{i_{0}}}\leq p^{-m\sigma}.
\]
If $\lambda_{i_{0}}=\sigma$, we observe that 
\[
D_{p}=\sum_{j=1}^{r}m_{j}^{\beta_{i_{0}}}p^{(\sigma-\lambda_{i_{0}
})m_{j}}D_{j,i_{0},p} = \sum_{j=1}^{r}m_{j}^{\beta_{i_{0}}}D_{j,i_{0},p}.
\]
By the definition of determinant, there exist $\gamma_{j}, d_{j}$, for each $1\leq j \leq r$, such that $D_{j, i_{0},p}=d_{j}p^{\gamma_{j}}$, when $p\rightarrow\infty$. By changing $m_{1},\ldots,m_{r}$ if necessary, we can assume that there exists $d>0$, such that $|D_{p}|=dp^{\gamma}$, when $p\rightarrow\infty$, where $\gamma=\max\{\gamma_{j} \mid 1\leq j\leq r\}$. Thus there exist $C_{0}>0$ and $N_{i_{0}} >N_I$, such that 
\[
|a_{i_{0},p}|\leq C_{I}\dfrac{\sum_{j=1}^{r}|D_{j,i_0,p}|}{|D_{p}|}\leq C_{0},
\]
for all $p>N_{i_{0}}$. And so
\[
|a_{i_{0},p}p^{-m\lambda_{i_{0}}}|\leq C_{0}p^{-m\sigma},
\]
for all $p>N_{i_{0}}$ and all $m>1$.
This proves the claim.\\

Hence we have 
\[
|E_{p,m}^{0}(f)|=\Big|\sum_{i=1}^{s}a_{i,p}m^{\beta_{i}}p^{-\lambda_{i}m}\11_{A_{i}}(m)\Big|\leq sC_{0}m^{n-1}p^{-m\sigma},
\]
for all $m>M_{0}, p>N_{0}$. By Equation \ref{eq: lemmas combined} we also have, for each $1<m \leq M_0$, an upper bound for $|E_{m,p}^0(f)|$ in terms of some constant $C_m$.
Now let $N:=\max\{N_{i} \mid i\in\{0,2,\ldots,M_{0}\}\}$ and $C:=\max\{sC_{0},C_{2},\ldots,C_{M_{0}}\}$, then we have 
\[
|E_{p,m}^{0}|=\Big|\sum_{i=1}^{s}a_{i,p}m^{\beta_{i}}p^{-\lambda_{i}m}\11_{A_{i}}(m)\Big|\leq Cm^{n-1}p^{-m\sigma},
\]
for all $m>1, p>N$.
\end{proof}

\section{The second approach by geometry}\label{sec: geometry}
We take $f \in \ZZ[x_1, \ldots, x_n]$ a nonconstant polynomial with $f(0)=0$ and we put $\sigma = \min\{c_0(f), \frac{1}{2}\}$, where $c_0(f)$ is the log-canonical threshold of $f$ at $0$. We use the notation of Section \ref{03} with $(Y,h)$ an embedded resolution of $f^{-1}(0)$, $K=\QQ$ and $\cO_{K}=\ZZ$. Then by Theorem \ref{06} and the discussion preceding that theorem, there exist $M_{0}, N_{0} \in \NN$, such that for all $p>N_0$, there exist at most $M_0$ non-trivial characters $\chi$ of $\ZZ_{p}^{\times}$ with $Z_{\Phi_{p}}(p,\chi,s,f)\neq 0$, where $\Phi_{p}=\11_{{(p\ZZ_p)^n}}$, i.e., $\text{Supp}(\overline{\Phi}_p) = \{0\}$. Moreover any such character has conductor $c(\chi)=1$. To simplify we will omit $\Phi_{p}$ and $f$ in the notation of Igusa's local zeta functions.

We can suppose that $f$ has good reduction mod $p$  for all $p>N{_0}$ (after enlarging $N{_0}$ if necessary). Let $p >N_0$ and let $E$ be an irreducible component of $h^{-1}(Z(f))$, such that $0\in \overline{h}(\overline{E})$, then $h(E)\cap p\ZZ_{p}^{n}\neq\emptyset$. Remark that $h$ is proper, so $h(E)$ is a closed subvariety of $\AA^{n}$. Therefore, after possibly enlarging $N_0$ again, we can assume that if $0\notin h(E)$, then $h(E)\cap p\ZZ_{p}^{n}=\emptyset$, for all $p > N_0$. Hence, for $p > N_0$, $0 \in \overline{h}(\overline{E})$ implies $0\in h(E)$. 
So the map $E \mapsto \overline{E}$ is a bijection between
\[
\{E_i \mid i \in T,\ 0\in h(E_i)\} \qquad \text{and} \qquad \{\overline{E}_i \mid i \in T,\ 0\in \overline{h}(\overline{E}_i)\},
\]
where $T$ is as in Section \ref{***},
hence
\begin{equation}\label{eq: lct reduced mod p}
c_{0}(f)=\min_{i \in T : 0\in\overline{h}(\overline{E}_i(\FF_{p}))}\big\{\frac{\nu_{i}}{N_{i}}\big\}.
\end{equation}
Now to prove the Main Theorem \ref{*}, we use Proposition \ref{04} for $p>N_0$, $u=1$, $\pi=p$ and $m>1$. This tells us that $E_{p,m}^{0}(f)$ is equal to 
\[
Z(p,\chi_{\text{triv}},0)+\text{Coeff}_{t^{m-1}}\dfrac{(t-p)Z(p,\chi_{\text{triv}},s)}{(p-1)(1-t)}+\sum_{\chi\neq\chi_{\text{triv}}}g_{\chi^{-1}}
\text{Coeff}_{t^{m-1}}Z(p,\chi,s).
\]

\begin{lem}\label{18}There exist a positive constant $C$ and a natural number $N$, such that for all $m>1$, $p>N$, we have 
\[
Z(p,\chi_{\textnormal{triv}},0)+\textnormal{Coeff}_{t^{m-1}}\dfrac{(t-p)Z(p,\chi_{\textnormal{triv}},s)}{(p-1)(1-t)}\leq Cm^{n-1}p^{-mc_0(f)}.
\]
\end{lem}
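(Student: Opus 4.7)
The plan is to use the explicit formula for $Z(p, \chi_{\textnormal{triv}}, s)$ from Theorem \ref{05} together with a partial fraction decomposition in the variable $t = p^{-s}$. Since $\Phi_{p} = \11_{(p\ZZ_{p})^{n}}$, the reduction $\overline{\Phi}_{p}$ is the characteristic function of $\{0\} \subset \FF_{p}^{n}$, so $c_{I, \chi_{\textnormal{triv}}}^{0}$ vanishes unless $\overset{\circ}{\overline{E}}_{I}(\FF_{p})$ meets $\overline{h}^{-1}(0)$. In particular, only $I \subset T_{0} := \{i \in T : 0 \in \overline{h}(\overline{E}_{i})\}$ contribute, and for $p > N_{0}$ this coincides with $\{i \in T : 0 \in h(E_{i})\}$, so Equation \ref{eq: lct reduced mod p} gives $\nu_{i}/N_{i} \geq c_{0}(f)$ for every such $i$.

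Set $R(t) := \frac{(t-p)Z(p, \chi_{\textnormal{triv}}, s)}{(p-1)(1-t)}$. Since $\nu_{i} \geq 1$ implies $p^{-\nu_{i}} \neq 1$, the function $Z(p, \chi_{\textnormal{triv}}, s)$ is regular at $t = 1$, so a direct residue computation yields
\[
\lim_{t \to 1}(1 - t) R(t) = \frac{(1-p)\,Z(p, \chi_{\textnormal{triv}}, 0)}{p-1} = -Z(p, \chi_{\textnormal{triv}}, 0).
\]
Writing $R(t) = -\frac{Z(p, \chi_{\textnormal{triv}}, 0)}{1-t} + R'(t)$ with $R'$ regular at $t = 1$, and using $\textnormal{Coeff}_{t^{m-1}}(1-t)^{-1} = 1$, we obtain
\[
Z(p, \chi_{\textnormal{triv}}, 0) + \textnormal{Coeff}_{t^{m-1}} R(t) = \textnormal{Coeff}_{t^{m-1}} R'(t),
\]
which reduces the lemma to bounding $|\textnormal{Coeff}_{t^{m-1}} R'(t)|$. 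The remaining poles of $R'(t)$ all lie at points $\alpha = p^{\nu_{i}/N_{i}} \zeta$ with $i \in T_{0}$ and $\zeta$ an $N_{i}$-th root of unity, and by the normal crossings property of the embedded resolution each such pole has multiplicity at most $n$. A standard partial fraction expansion then expresses $\textnormal{Coeff}_{t^{m-1}} R'(t)$ as a sum of terms $B_{\alpha, k} \binom{m+k-2}{k-1} \alpha^{-(m-1)}$ with $1 \leq k \leq n$, each bounded by $|B_{\alpha, k}| \, m^{n-1} p^{-(m-1)\nu_{i}/N_{i}}$.

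Combining this with $\nu_{i}/N_{i} \geq c_{0}(f)$ yields an overall bound of the form $\bigl(\sum_{\alpha,k} |B_{\alpha,k}|\bigr) m^{n-1} p^{-(m-1) c_{0}(f)}$, which has the desired shape $Cm^{n-1}p^{-mc_{0}(f)}$ once the factor $p^{c_{0}(f)}$ is absorbed into the constant. The last step, which I expect to be the main obstacle, is to estimate the partial fraction coefficients $|B_{\alpha, k}|$ uniformly in $p$. These coefficients can be written as explicit rational expressions in the $c_{I, \chi_{\textnormal{triv}}}^{0}$ together with factors $p^{-\nu_{i}}$, $(p-1)$, $p^{-n}$, and differences of roots of unity. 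By the Lang-Weil estimates (\cite{Lawe}), $|c_{I, \chi_{\textnormal{triv}}}^{0}|$ is bounded by a constant independent of $p$ times $p^{\dim(\overline{E}_{I} \cap \overline{h}^{-1}(0))} \leq p^{n-|I|}$, and combined with the prefactor $p^{-n}(p-1)^{|I|}$ the leading power of $p$ cancels. Care is required when $\alpha$ is a simultaneous root of several factors $1 - p^{-\nu_{j}}t^{N_{j}}$, so that the partial fraction decomposition is grouped according to the divisors meeting at each pole and bounded term by term; once the resulting uniform bound on $|B_{\alpha, k}|$ is established, the lemma follows.
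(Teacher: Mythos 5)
Your reduction is correct but your route is genuinely different from the paper's, and it stops short of the essential estimate. The paper never introduces a partial fraction decomposition: it uses the identity $\frac{t-p}{(p-1)(1-t)} = -\frac{1}{p-1}-\frac{1}{1-t}$ and then expands each product $\prod_{i\in I}\frac{t^{N_i}p^{-\nu_i}}{1-t^{N_i}p^{-\nu_i}}$ as an explicit nonnegative power series $\sum_{(a_i)\in\NN^{\#I}}t^{\sum N_i(a_i+1)}p^{-\sum\nu_i(a_i+1)}$. With this, $Z(p,\chi_{\text{triv}},0)$ is the full sum of the series, $\text{Coeff}_{t^{m-1}}\frac{Z}{1-t}$ is the partial sum over $\sum N_i(a_i+1)\leq m-1$, and their difference is literally a tail sum over $\sum N_i(a_i+1)\geq m$, which is bounded directly using $\nu_i/N_i\geq c_0(f)$ (the remaining $\text{Coeff}_{t^{m-1}}\frac{Z}{p-1}$ term is bounded the same way). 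The power-of-$p$ bookkeeping is transparent because the series coefficients are explicit and nonnegative. Your alternative — extract the simple pole of $R(t)=\frac{(t-p)Z}{(p-1)(1-t)}$ at $t=1$, reducing the statement to $\text{Coeff}_{t^{m-1}}R'(t)$, and then do partial fractions on $R'$ — is a valid starting point; the residue computation and the identity $Z(0)+\text{Coeff}_{t^{m-1}}R=\text{Coeff}_{t^{m-1}}R'$ are both correct.

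The gap is exactly where you say you expect the main obstacle: you have not established the uniform-in-$p$ bound on the partial fraction coefficients $|B_{\alpha,k}|$, and that bound is where the actual content of the lemma lives. The Lang--Weil bound $|c^0_{I,\chi_{\text{triv}}}|\leq D_Ip^{n-\#I}$ combined with the prefactor $p^{-n}(p-1)^{\#I}$ does control one source of $p$-dependence (and matches what the paper does), but it is not the only one: the pole locations $\alpha=p^{\nu_i/N_i}\zeta$ and the products of differences of poles appearing in the partial fraction formula each contribute positive and negative powers of $p$ that must cancel. For a single factor one can compute $\frac{1}{1-t^Np^{-\nu}}=\frac{1}{N}\sum_{\zeta^N=1}\frac{1}{1-t/(p^{\nu/N}\zeta)}$, so the coefficient $1/N$ is indeed $p$-independent, but verifying this cancellation in general — especially at poles where several factors $1-t^{N_i}p^{-\nu_i}$ vanish simultaneously, producing order-$k>1$ poles whose coefficients involve derivatives — is a nontrivial computation that you do not carry out. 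This is precisely the work the paper's series expansion sidesteps, and until it is done the proof is incomplete.
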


\begin{proof}
We use Theorem \ref{05} which tells us that there exists a natural number $N'$, such that for all $p>N'$,
\begin{align}
\label{eq: zeta zero}Z(p,\chi_{\text{triv}},0)&=p^{-n}\sum_{I\subset T}c_{I,\chi_{\text{triv}}}^{0}\prod_{i\in I}\frac{(p-1)p^{-\nu_{i}}}{1-p^{-\nu_{i}}};\\
Z(p,\chi_{\text{triv}},s)&=p^{-n}\sum_{I\subset T}c_{I,\chi_{\text{triv}}}^{0}\prod_{i\in I}\frac{(p-1)t^{N_{i}}p^{-\nu_{i}}}{1-t^{N_{i}}p^{-\nu_{i}}}. \nonumber
\end{align}
From the formula
$\frac{(t-p)}{(p-1)(1-t)}=-\frac{1}{p-1}-\frac{1}{1-t}$ we get
\[
\text{Coeff}_{t^{m-1}}\frac{(t-p)Z(p,\chi_{\text{triv}},s)}{(p-1)(1-t)}=-\text{Coeff}_{t^{m-1}}\frac{Z(p,\chi_{\text{triv}},s)}{p-1}-\text{Coeff}_{t^{m-1}}\frac{Z(p,\chi_{\text{triv}},s)}{1-t},
\]
where
\begin{align}
\label{eq: coeff zeta over p-1}\text{Coeff}_{t^{m-1}}\frac{Z(p,\chi_{\text{triv}},s)}{p-1}&=\sum_{I\subset T}p^{-n}c_{I,\chi_{\text{triv}}}^{0}(p-1)^{\#I}\frac{1}{p-1} \text{Coeff}_{t^{m-1}}\prod_{i\in I}\frac{t^{N_{i}}p^{-\nu_{i}}}{1-t^{N_{i}}p^{-\nu_{i}}};\\
\label{eq: coeff zeta over 1-t}\text{Coeff}_{t^{m-1}}\frac{Z(p,\chi_{\text{triv}},s)}{1-t}&=\sum_{I\subset T}p^{-n}c_{I,\chi_{\text{triv}}}^{0}(p-1)^{\#I} \text{Coeff}_{t^{m-1}}\frac{1}{1-t}\prod_{i\in I}\frac{t^{N_{i}}p^{-\nu_{i}}}{1-t^{N_{i}}p^{-\nu_{i}}}.
\end{align}
Notice that if $I \subset T$, such that $\overset{\circ}{\overline{E}}_{I}\cap \overline{h}^{-1}(0) = \emptyset$, then $c_{I,\chi_{\text{triv}}}^0 = 0$. Hence we can assume that $\overset{\circ}{\overline{E}}_{I}\cap \overline{h}^{-1}(0)\neq\emptyset$. For such $I \subset T$ we have
\begin{align}
\label{eq: coeff}\text{Coeff}_{t^{m-1}}\prod_{i\in I}\frac{t^{N_{i}}p^{-\nu_{i}}}{1-t^{N_{i}}p^{-\nu_{i}}}&=\sum_{(a_{i})_{i\in I}\in J_{I,m}}p^{-\sum_{i\in I}\nu_{i}(a_{i}+1)}; \\
\label{eq: coeff2}\text{Coeff}_{t^{m-1}}\frac{1}{1-t}\prod_{i\in I}\frac{t^{N_{i}}p^{-\nu_{i}}}{1-t^{N_{i}}p^{-\nu_{i}}}&=\sum_{(a_{i})_{i\in I}\in J'_{I,m}}p^{-\sum_{i\in I}\nu_{i}(a_{i}+1)},
\end{align}
where $J_{I,m}:=\{(a_{i})_{i\in I} \in \NN^{\#I} \mid \sum_{i\in I}N_{i}(a_{i}+1)=m-1\}$ and $J'_{I,m}:=\{(a_{i})_{i\in I} \in \NN^{\#I} \mid \sum_{i\in I}N_{i}(a_{i}+1)\leq m-1\}$.
When $(a_{i})_{i\in I}\in J_{I,m}$ and $p>N_0$, we can use Equation \ref{eq: lct reduced mod p} for the following estimate:
\begin{align*}
-\sum_{i\in I}\nu_{i}(a_{i}+1)&=-\sum_{i\in I}N_{i}\sigma_{i}(a_{i}+1)\\
&=-\sum_{i\in I}N_{i}(a_{i}+1)(\sigma_{i}-c_0(f))-(m-1)c_0(f)\\
&\leq -(m-1)c_0(f),
\end{align*}
where $\sigma_{i}=\frac{\nu_{i}}{N_{i}}\geq c_{0}(f)$, since we assumed that $\overset{\circ}{\overline{E}}_{I}\cap \overline{h}^{-1}(0)\neq\emptyset$. We also deduce from this assumption that $\#I \leq n$, thus by Equation \ref{eq: coeff},
\begin{equation} \label{eq: sum1}
\text{Coeff}_{t^{m-1}}\prod_{i\in I}\frac{t^{N_{i}}p^{-\nu_{i}}}{1-t^{N_{i}}p^{-\nu_{i}}}\leq \#(J_{I,m})p^{-(m-1)c_0(f)}\leq m^{n-1}p^{-(m-1)c_0(f)},
\end{equation}
for all $p > N_0$. Using Equation \ref{eq: coeff2} we can see that, in order to find an upper bound for the difference of \ref{eq: zeta zero} and \ref{eq: coeff zeta over 1-t}, we need to analyse the expression
\begin{align*}
&\prod_{i\in I}\frac{p^{-\nu_{i}}}{1-p^{-\nu_{i}}}-\sum_{(a_{i})_{i\in I}\in J'_{I,m}}p^{-\sum_{i\in I}\nu_{i}(a_{i}+1)}=\\
&\sum_{(a_{i})_{i\in I}\in\NN^{\#I}}p^{-\sum_{i\in I}\nu_{i}(a_{i}+1)}-\sum_{(a_{i})_{i\in I}\in J'_{I,m}}p^{-\sum_{i\in I}\nu_{i}(a_{i}+1)}=\\
&\sum_{(a_{i})_{i\in I}\in J''_{I,m}}p^{-\sum_{i\in I}\nu_{i}(a_{i}+1)},
\end{align*}
where $J''_{I,m}:=\{(a_{i})_{i\in I}\in \NN^{\#I} \mid \sum_{i\in I}N_{i}(a_{i}+1)\geq m\}$. Let $m_{I}:=m+\max\{N_{i} \mid i\in I\}$ and $\overline{J}_{I,m}:=\{(a_{i})_{i\in I} \in \NN^{\#I} \mid m\leq\sum_{i\in I}N_{i}(a_{i}+1)\leq m_{I}\}$. Afters some calculations we find that
\[
\sum_{(a_{i})_{i\in I}\in J''_{I,m}}p^{-\sum_{i\in I}\nu_{i}(a_{i}+1)}\leq \Big(1+\prod_{i\in I}\dfrac{1}{1-p^{-\nu_{i}}}\Big)\sum_{(a_{i})_{i\in I}\in \overline{J}_{I,m}}p^{-\sum_{i\in I}\nu_{i}(a_{i}+1)}.
\]
But if $(a_{i})_{i\in I}\in \overline{J}_{I,m+1}$, then, for all $p > N_0$,
\begin{align*}
-\sum_{i\in I}\nu_{i}(a_{i}+1)&=-\sum_{i\in I}N_{i}\sigma_{i}(a_{i}+1)\\
&\leq-\sum_{i\in I}N_{i}(a_{i}+1)(\sigma_{i}-c_0(f))-mc_0(f)\\
&\leq -mc_0(f).
\end{align*}
Therefore, for all $p>N_0$, we have,
 \begin{equation} \label{eq: sum2}
 \sum_{(a_{i})_{i\in I}\in J''_{I,m}}p^{-\sum_{i\in I}\nu_{i}(a_{i}+1)}\leq (1+2^{\#(I)})\#(\overline{J}_{I,m})p^{-mc_0(f)}\leq C_{I}m^{n-1}p^{-mc_0(f)},
 \end{equation}
where $C_{I}$ is a constant which does not depend on $m$ and $p$, for example $C_{I}=(1+2^{\#(I)}) (\max\{N_{i} \mid i\in I\}+1)$.\\

Now if $I\subset T$, then, by the
Lang-Weil estimate (see \cite{Lawe}), there exists a constant $D_{I}$ and a natural number $N_I$, depending only on $I$, such that for all $p >N_I$, we have
\begin{equation}\label{eq: LaWe estimation}
c_{I,\chi_{\text{triv}}}^0=\sum_{a\in \overset{\circ}{\overline{E}}_{I}\cap\overline{h}^{-1}(0)}\Omega_{\chi_{\text{triv}}}(a)=\#\big(\overset{\circ}{\overline{E}}_{I}\cap\overline{h}^{-1}(0)\big)\leq\#\big(\overset{\circ}{\overline{E}}_{I}\big) \leq D_{I}p^{n-\#I}.
\end{equation}
Putting together the inequalities \ref{eq: sum1}, \ref{eq: sum2} and \ref{eq: LaWe estimation} with the formulas \ref{eq: zeta zero}, \ref{eq: coeff zeta over p-1} and \ref{eq: coeff zeta over 1-t}, we find that there exists a natural number $N > \max\{N_0, N', (N_I)_{I\subset T}\}$, such that for all $p>N$, we have
\begin{align*}
&Z(p,\chi_{\text{triv}},0)+\text{Coeff}_{t^{m-1}}\frac{(t-p)Z(p,\chi_{\text{triv}},s)}{(p-1)(1-t)}\\
&\leq\sum_{{I\subset T: \overset{\circ}{\overline{E}}_{I}\cap\overline{h}^{-1}(0)\neq\emptyset}}p^{-n}c_{I,\chi_{\text{triv}}}^{0}(p-1)^{\#I}m^{n-1}\Big(\frac{p^{-mc_0(f)+c_0(f)}}{p-1}+C_{I}p^{-mc_0(f)}\Big)\\
&\leq\sum_{{I\subset T: \overset{\circ}{\overline{E}}_{I}\cap\overline{h}^{-1}(0)\neq\emptyset}}p^{-n}D_{I}p^{n-\#I}(p-1)^{\#I}m^{n-1}\Big(\frac{p^{-mc_0(f)+c_0(f)}}{p-1}+C_{I}p^{-mc_0(f)}\Big)\\
&\leq \sum_{{I\subset T: \overset{\circ}{\overline{E}}_{I}\cap\overline{h}^{-1}(0)\neq\emptyset}}D_{I}(C_{I}+2)m^{n-1}p^{-mc_0(f)} \leq Cm^{n-1}p^{-mc_0(f)},
\end{align*}
where $C=\sum_{I \subset T}D_{I}(C_{I}+2)$ is a constant that is independent of $p$ and $m$ and where we have used the fact that $c_0(f) \leq 1$.
\end{proof}

\begin{lem}\label{19}There exist a positive constant $C$ and a natural number $N$, such that for all $m>1$, $p>N$, we have 
\[
\Big|\sum_{\chi\neq\chi_{\textnormal{triv}}}g_{\chi^{-1}} \textnormal{Coeff}_{t^{m-1}}Z(p,\chi,s)\Big|\leq Cm^{n-1}p^{-m\sigma}.
\]
\end{lem}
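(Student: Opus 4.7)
The plan is to reduce, as in Lemma \ref{18}, to a finite sum indexed by characters of conductor $1$ and then combine three ingredients: the classical Gauss sum estimate for $g_{\chi^{-1}}$, the same kind of geometric series expansion of $Z(p,\chi,s)$ as in Lemma \ref{18}, and the Lang--Weil bound on $c_{I,\chi}^{0}$. The extra factor $p^{-1/2}$ contributed by the Gauss sum is precisely what produces the $\frac{1}{2}$ threshold appearing in $\sigma$.

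First I would invoke Theorem \ref{06}: since $\Phi_{p}=\11_{(p\ZZ_{p})^{n}}$ satisfies $\Supp(\overline{\Phi}_{p})=\{0\}$ and $\overline{f}(0)=0$, the hypothesis $C_{\overline{f}}\cap\Supp(\overline{\Phi}_{p})\subset\overline{f}^{-1}(0)$ holds automatically, so for $p>N_{0}$ every non-trivial $\chi$ with $Z(p,\chi,s)\neq 0$ must have conductor $c(\chi)=1$. By the remark following Theorem \ref{05}, the number of such contributing $\chi$ is bounded by a constant $M$ depending only on the numerical data $(N_{i},\nu_{i})_{i\in T}$. For such a $\chi$ of conductor $1$, $g_{\chi^{-1}}$ equals $(p-1)^{-1}\sum_{v\in\FF_{p}^{\times}}\chi^{-1}(v)\Psi(v/p)$, which up to the factor $(p-1)^{-1}$ is a classical Gauss sum on $\FF_{p}$; its absolute value is therefore $\sqrt{p}/(p-1)\leq 2p^{-1/2}$ for $p$ large.

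Next I would estimate $|\text{Coeff}_{t^{m-1}}Z(p,\chi,s)|$ by mimicking the bookkeeping of Lemma \ref{18}. Applying Theorem \ref{05} and expanding each geometric factor gives
\[
\text{Coeff}_{t^{m-1}} Z(p,\chi,s) = p^{-n}\sum_{\substack{I\subset T,\\ d\mid N_{i}\ \forall i\in I}} c_{I,\chi}^{0}(p-1)^{|I|}\sum_{(a_{i})_{i\in I}\in J_{I,m}} p^{-\sum_{i\in I}\nu_{i}(a_{i}+1)},
\]
where $J_{I,m}=\{(a_{i})\in\NN^{|I|}:\sum_{i}N_{i}(a_{i}+1)=m-1\}$ and $d$ is the order of $\chi$. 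Only indices $I$ with $\overset{\circ}{\overline{E}}_{I}\cap\overline{h}^{-1}(0)\neq\emptyset$ can contribute, which forces $|I|\leq n$ and $\nu_{i}/N_{i}\geq c_{0}(f)$ for every $i\in I$ by Equation \ref{eq: lct reduced mod p}. The same estimate used to derive Inequality \ref{eq: sum1} in Lemma \ref{18} gives $\sum_{(a_{i})\in J_{I,m}}p^{-\sum_{i}\nu_{i}(a_{i}+1)}\leq m^{n-1}p^{-(m-1)c_{0}(f)}$, while $|c_{I,\chi}^{0}|\leq\#\overset{\circ}{\overline{E}}_{I}\leq D_{I}p^{n-|I|}$ by Lang--Weil. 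Together these yield a constant $C'$, independent of $p$, $m$ and $\chi$, such that $|\text{Coeff}_{t^{m-1}}Z(p,\chi,s)|\leq C' m^{n-1}p^{-(m-1)c_{0}(f)}$.

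Finally, combining these bounds and summing over the at most $M$ non-trivial contributing $\chi$ gives
\[
\Big|\sum_{\chi\neq\chi_{\text{triv}}}g_{\chi^{-1}}\text{Coeff}_{t^{m-1}}Z(p,\chi,s)\Big|\leq 2MC' m^{n-1}p^{-(m-1)c_{0}(f)-1/2}.
\]
It remains to verify $(m-1)c_{0}(f)+\frac{1}{2}\geq m\sigma$ for $m\geq 1$, a short case split: if $c_{0}(f)\geq\frac{1}{2}$ then $\sigma=\frac{1}{2}$ and we need $(m-1)c_{0}(f)\geq (m-1)/2$; if $c_{0}(f)<\frac{1}{2}$ then $\sigma=c_{0}(f)$ and we need $\frac{1}{2}\geq c_{0}(f)$. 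Both hold, so the desired bound with $C=2MC'$ follows. The key observation --- more conceptual than a technical obstacle --- is that the Gauss sum gain of $p^{-1/2}$ is exactly what allows one to improve Lemma \ref{18}'s exponent $c_{0}(f)$ to $\sigma=\min\{c_{0}(f),\frac{1}{2}\}$; this is why the Main Theorem cannot go beyond log-canonical threshold one half by the present method.
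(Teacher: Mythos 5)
Your argument matches the paper's proof of Lemma \ref{19} essentially step for step: you reduce to finitely many conductor-one characters via Theorem \ref{06} and the remark after Theorem \ref{05}, re-use the estimates \ref{eq: coeff}, \ref{eq: sum1} and the Lang--Weil bound from Lemma \ref{18} to get $|\text{Coeff}_{t^{m-1}}Z(p,\chi,s)|\leq C'm^{n-1}p^{-(m-1)c_0(f)}$, and fold in the Gauss-sum gain $|g_{\chi^{-1}}|\lesssim p^{-1/2}$. The only cosmetic difference is your explicit case split verifying $(m-1)c_0(f)+\tfrac12\geq m\sigma$, where the paper instead chains through $p^{-m\sigma+\sigma-\frac12}\leq p^{-m\sigma}$ using $\sigma\leq c_0(f)$ and $\sigma\leq\tfrac12$; both computations are identical in substance.
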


\begin{proof}
We continue to use Theorem \ref{05}, hence there exists a natural number $N'$, such that for all $p>N'$,
\[
Z(p,\chi,s)=p^{-n}\sum_{{\substack{I\subset T,\\ \forall i\in I: d\mid N_{i}}}}c_{I,\chi}^{0}\prod_{i\in I}\frac{(p-1)t^{N_{i}}p^{-\nu_{i}}}{1-t^{N_{i}}p^{-\nu_{i}}},
\]
with $\chi$ a character of order $d$ on $\ZZ_{p}^{\times}$ with conductor $c(\chi)=1$.

For a subset $I\subset T$, such that $d|N_{i}, \forall i\in I$, and $\overset{\circ }{\overline{E}}_{I}\cap\overline{h}^{-1}(0)\neq\emptyset$, we have
\[
\text{Coeff}_{t^{m-1}}\prod_{i\in I}\frac{t^{N_{i}}p^{-\nu_{i}}}{1-t^{N_{i}}p^{-\nu_{i}}}=\sum_{(a_{i})_{i\in I}\in J_{I,m}}p^{-\sum_{i\in I}\nu_{i}(a_{i}+1)},
\]
where $J_{I,m}$ is as in the proof of Lemma \ref{18}. By the equations \ref{eq: coeff} and \ref{eq: sum1} we have
\[
\sum_{(a_{i})_{i\in I}\in J_{I,m}}p^{-\sum_{i\in I}\nu_{i}(a_{i}+1)}\leq m^{n-1}p^{-mc_0(f)+c_0(f)}.
\]
We use the Lang-Weil estimate (\cite{Lawe}) again, as we did in Lemma \ref{18}. So there exist a constant $D_I$ and a natural number $N_I$, depending only on $I$, such that  for all $p>N_I$, we have
\begin{align*}
|c_{I,\chi}^{0}|&=\Big|\sum_{a\in \overset{\circ}{\overline{E}}_{I}\cap\overline{h}^{-1}(0)}\Omega_{\chi}(a)\Big| \leq \sum_{a\in \overset{\circ}{\overline{E}}_{I}\cap\overline{h}^{-1}(0)}1 = \#\big(\overset{\circ}{\overline{E}}_{I}\cap\overline{h}^{-1}(0)\big)\\
&\leq\#\big(\overset{\circ}{\overline{E}}_{I}\big) \leq D_{I}p^{n-\#(I)}.
\end{align*}
If we take $N'' > \underset{I \subset T}{\max} N_I$, then we find that for all $p>N''$,
\begin{align*}
|\text{Coeff}_{t^{m-1}}Z(p,\chi,s)|&\leq\sum_{{\substack{I\subset T,\\ \forall i\in I: d\mid N_{i}}}}p^{-n}D_{I}p^{n-\#(I)}(p-1)^{\#(I)}m^{n-1}p^{-mc_0(f)+c_0(f)}\\
&\leq \sum_{{\substack{I\subset T,\\ \forall i\in I: d\mid N_{i}}}}D_{I}m^{n-1}p^{-mc_0(f)+c_0(f)}\\
&\leq C'm^{n-1}p^{-mc_0(f)+c_0(f)},
\end{align*}
where $C':=\sum_{{\substack{I\subset T}}}D_{I}$. Furthermore, by a standard result on Gauss sums, we can see that, if $\chi\neq\chi_{\text{triv}}$, then $|g_{\chi^{-1}}|\leq Dp^{-\frac{1}{2}}$, for some constant $D$, that does not depend on $\chi$ and $p$.
By Theorem \ref{06} and the discussiong preceding this theorem, we know that for $p>N_0$, the set $\Upsilon_{p}$ of non-trivial characters $\chi$ such that  $Z(p,\chi,s)$ is not zero, has atmost $M_{0}$ elements, for some positive integer $M_{0}$. Moreover, all these characters have conductor $1$. So there exists a natural number $N > \max\{N_0, N''\}$, such that for all $p>N$ and $m>1$, we have 
\begin{align*}
\Big|\sum_{\chi\neq\chi_{\text{triv}}}g_{\chi^{-1}} \text{Coeff}_{t^{m-1}}Z(p,\chi,s)\Big|
&=\Big|\sum_{\chi\in\Upsilon_{p}}g_{\chi^{-1}} \text{Coeff}_{t^{m-1}}Z(p,\chi,s)\Big|\\
&\leq\sum_{\chi\in\Upsilon_{p}}|g_{\chi^{-1}}|C'm^{n-1}p^{-mc_0(f)+c_0(f)}\\
&\leq \sum_{\chi\in\Upsilon_{p}}C'Dm^{n-1}p^{-m\sigma+\sigma-\frac{1}{2}}\\
&\leq Cm^{n-1}p^{-m\sigma}
\end{align*}
where $C=M_0 C' D$ is a constant that is independent of $p$ and $m$ and where we have used the fact that $\sigma = \min\{c_0(f), \frac{1}{2}\}$.
\end{proof}

\begin{remark}\label{rem: adapt proof}
These two proofs still work if we take $\Phi_p = \11_{U_p}$ instead of $\11_{(p\ZZ_p)^n}$, where $U_p$ is a union of some multiballs $y+(p\ZZ_p)^n$ in $\ZZ_p^n$, such that $C_{\overline{f}} \cap \overline{U}_p \subset \overline{f}^{-1}(0)$ (this is needed in the proof of Lemma \ref{19} to apply Theorem \ref{06}). We have to replace $c_0(f)$ for example by $c(f)$, $\overset{\circ}{\overline{E}}_{I}\cap\overline{h}^{-1}(0)$ by $\overset{\circ}{\overline{E}}_{I}\cap\overline{h}^{-1}(\overline{U}_p)$ and $c^0_{I, \chi_{\text{triv}}}$ by $c_{I, \11_{U_p}, \chi_{\text{triv}}}$. The constant $C$ and the natural number $N$ that are found in these proofs, do not depend on $U_p$. They do depend however on $f$ and on the embedded resolution $(Y,h)$ of $f$.
\end{remark}

\begin{proof}[Proof of the Main Theorem \ref{*}]
The proof follows by combining the two Lemmas \ref{18} and \ref{19} and using the fact that $\sigma \leq c_0(f)$.
\end{proof}

\section{Proof of the Main Theorem \ref{**} }\label{sec: Main Theorem 1.2}
In this section we will prove the Main Theorem \ref{**} by adapting the proofs from Section \ref{sec: geometry}. First, we need the following lemma.

\begin{lem}\label{20}
Let $f\in\ZZ[x_{1},\ldots,x_{n}]$ and $V_{f,p}$ be the set of critical values $z$ of $f$ in $\QQ_{p}$. Then $\#(V_{f,p})$ has an upper bound $d$, that does not depend on $p$. Furthermore, there exists $N$, such that for all $p>N$, the following holds:
\begin{enumerate}
\item for all $z \in V_{f,p}$, we have $\ord_p(z)= 0$;
\item for any two distinct points $z_{1}, z_{2}$ in $V_{f,p}$, we have $\ord_{p}(z_{1}-z_{2})=0$;
\item if $x \in \ZZ_p^n$ such that $\ord_p(f(x)-z)= 0$ for all $z \in V_{f,p}$, then $x$, resp.\ $\overline{x}$, is a regular point of $f$, resp.\ $\overline{f} := (f \mod p)$.
\end{enumerate}

\end{lem}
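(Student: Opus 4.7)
The plan is to pass to the algebraic closure $\overline{\QQ}$, where the set $V := V_{f,\overline{\QQ}}$ of critical values of $f$ is finite, and then to deduce the three conditions via a Nullstellensatz identity combined with Hensel's lemma.

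First I would show that $V$ is finite and bounds $V_{f,p}$: by generic smoothness in characteristic zero applied to $f : \AA^{n}_{\overline{\QQ}} \to \AA^{1}_{\overline{\QQ}}$, the image $f(C_{f})$ of the critical locus is a proper closed subset of $\AA^{1}$, hence finite. Since any critical value of $f$ in $\QQ_{p}$ is algebraic over $\QQ$, we have $V_{f,p} \subseteq V$, so $\#V_{f,p} \leq \#V =: d$. For conditions (1) and (2): the finitely many nonzero elements of $V$ and the nonzero differences $z_{1} - z_{2}$ (for distinct $z_{1},z_{2} \in V$) are algebraic numbers, each a $p$-adic unit for all but finitely many primes. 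Enlarging $N$ past this finite set of bad primes yields both conditions for $p > N$ (in (1), for the relevant $z \neq 0$; the degenerate case $z=0\in V$ would need to be treated separately in the main argument).

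The essential step is (3). The polynomial
\[
P(Y) \;:=\; \prod_{z \in V}(Y - z) \;\in\; \QQ[Y]
\]
(with rational coefficients by Galois invariance) vanishes on $f(C_{f}) = V$, so $P(f)$ vanishes on $C_{f}$. Hilbert's Nullstellensatz then places $P(f)$ in $\sqrt{(\partial_{1}f,\ldots,\partial_{n}f)}$, and clearing denominators produces $k \geq 1$, a nonzero $D \in \ZZ$, and $b_{i} \in \ZZ[x_{1},\ldots,x_{n}]$ with
\[
D \cdot P(f)^{k} \;=\; \sum_{i=1}^{n} b_{i} \cdot \partial_{i} f \quad \text{in } \ZZ[x_{1},\ldots,x_{n}].
\]
I would then enlarge $N$ further so that, for $p > N$: $p \nmid D$, the reductions $\overline{z}$ for $z \in V$ are pairwise distinct in $\overline{\FF}_{p}$, and the discriminant of each minimal polynomial $M_{z}$ (over $\QQ$) is a $p$-adic unit (so $\overline{M_{z}}$ remains separable).

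Given $x \in \ZZ_{p}^{n}$ satisfying the hypothesis of (3): if $x$ were critical for $f$ over $\QQ_{p}$, then $f(x) \in V_{f,p}$ would contradict $\ord_{p}(f(x)-f(x)) = 0$, so $x$ is regular. Now suppose $\overline{x}$ is critical for $\overline{f}$. Reducing the displayed identity at $x$ modulo $p$ gives $\overline{D} \cdot \overline{P}(\overline{f(x)})^{k} = 0$, whence $\overline{P}(\overline{f(x)}) = 0$, so $\overline{f(x)} = \overline{z}$ for some $z \in V$. Because $\overline{f(x)} \in \FF_{p}$, also $\overline{z} \in \FF_{p}$; Hensel's lemma applied to $M_{z}$ yields a root of $M_{z}$ in $\ZZ_{p}$ reducing to $\overline{z}$, and by injectivity of reduction on the roots of the separable polynomial $M_{z}$ in $\overline{\QQ_{p}}$, this root coincides with $z$ itself. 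Therefore $z \in \QQ_{p}$, i.e.\ $z \in V_{f,p}$, and then $\ord_{p}(f(x) - z) \geq 1$ contradicts the hypothesis; hence $\overline{x}$ is regular too.

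The main obstacle is this last link in (3): showing that the critical value of $\overline{f}$ attained by $\overline{x}$ really comes from a $\QQ_{p}$-rational element of $V$, and not from one lying only in a nontrivial extension. This is precisely where Hensel's lemma applied to $M_{z}$ is indispensable, and where the hypothesis $p > N$ enters through the discriminants of the $M_{z}$.
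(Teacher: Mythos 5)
Your proof is correct and reaches the same conclusion, but it runs through genuinely different machinery than the paper. The paper's proof is model-theoretic: it applies quantifier elimination in $\text{ACF}_0$ to the defining formula of $V_f$ to produce two polynomials $T\in\ZZ[z]$ and $R\in\overline{\QQ}[z]$ with $V_f=Z(R)\subset Z(T)$ and simple roots, then invokes logical compactness (a Lefschetz-type transfer) to conclude that $V_{\overline{f}}=Z(R_p)\subset Z(T_p)$ with simple roots for all large $p$; the Hensel step is then run twice, once on $T$ to obtain a lift in $\ZZ_p$ and once on $R$ to locate that lift inside $V_f$, and conditions (1)--(2) for $Z(T)$ force the two lifts to coincide. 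Your proof replaces this transfer with a concrete algebraic certificate: generic smoothness gives finiteness of $V$, the Nullstellensatz produces the explicit identity $D\,P(f)^k=\sum_i b_i\,\partial_i f$ over $\ZZ$, and reducing this identity mod $p$ (for $p\nmid D$) directly shows that any critical value of $\overline{f}$ lies among the $\overline{z}$, $z\in V$; a single Hensel argument on the minimal polynomial $M_z$ (with separability mod $p$ ensured by avoiding the finitely many primes dividing its discriminant) then identifies the $\ZZ_p$-lift with $z$ itself. The paper's route requires less bookkeeping and fits the model-theoretic toolkit used throughout Section 3, while yours eliminates the model theory entirely for this lemma and makes the dependence of $N$ on $f$ fully explicit (via $D$, the discriminants of the $M_z$, and the resultants separating the $z$'s); both ultimately hinge on the same Hensel lifting to promote a mod-$p$ critical value to a $\QQ_p$-rational one. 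One small remark: it would be slightly cleaner to control separability of the single polynomial $\prod_{z\in V}(Y-z)$ mod $p$ rather than of each $M_z$ individually, but since $V$ is Galois-stable the two conditions are equivalent for large $p$, so nothing is lost.
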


\begin{proof}Remark that we can uniquely extend the valuation $\ord_{p}$ to $\overline{\QQ}_{p}$ (the algebraic closure of $\QQ_p$). We denote by $\cO_{p}=\{z\in \overline{\QQ}_{p}|\ord_{p}(z)\geq 0\}$ the ring of integers of $\overline{\QQ}_{p}$ and by $\cM_{p}=\{z\in \overline{\QQ}_{p}|\ord_{p}(z)>0\}$ its maximal ideal.

The set of critical values $V_{f}$ of $f$ is a definable set in $\cL_{\text{ring}}$ given by
\[
z\in V_{f}\Leftrightarrow\exists y\Big[z=f(y)\wedge\dfrac{\partial f}{\partial x_{1}}(y)=0 \wedge \ldots \wedge \dfrac{\partial f}{\partial x_{n}}(y)=0\Big].
\]
By elimination of quantifiers in the $\text{ACF}_{0}$-theory, i.e., the theory of algebraically closed fields of characteristic $0$, and because of the fact that $V_{f}$ is a finite set, there exist non-zero polynomials $T(z)\in\ZZ[z]$ and $R(z)\in\overline{\QQ}[z]$, such that $V_{f}=Z(R)\subset Z(T)$. Moreover, we can assume that $T(z)$ and $R(z)$ only have simple roots in $\overline{\QQ}$. By logical compactness, there exists $N_0$, such that for all $p >N_0$, $T_{p}(z)\in\FF_{p}(z)$ and $R_{p}(z)\in\overline{\FF}_{p}(z)$ also only have simple roots in $\overline{\FF}_{p}$ and $V_{\overline{f}}=Z(R_{p})\subset Z(T_{p})\subset\overline{\FF}_{p}$, where $T_{p} := (T \mod p)$ and $R_{p}:= (R \mod \cM_{p})$. Since $V_{f,p} \subset V_f$, we have $\#(V_{f,p})\leq \#(V_{f})=\deg(R) =: d$. Because $ Z(T)\subset \overline{\QQ}$ is a finite set of algebraic numbers, there exists $N\geq N_{0}$, such that for all $p>N$, the conditions (1) and (2) are satisfied, not only for $V_{f,p}$, but for $Z(T)$ and $Z(R)$ as well. 

To prove condition (3), we take $p>N$ and  $x\in\ZZ_{p}^{n}$ such that $\ord_{p}(f(x)-z)=0$ for all $z\in V_{f,p}$. Then $f(x)\notin V_{f,p}$, so $x$ is a regular point of $f$. Suppose, for a contradiction, that $\overline{x}$ is a critical point of $\overline{f}$, then $z':=\overline{f}(\overline{x})\in V_{\overline{f}}=Z(R_{p})\subset Z(T_{p})$. From the facts that $T_{p}$ has only simple roots in $\overline{\FF}_{p}$, $z'\in \FF_{p}$ and $T_{p}(z')=0$, it follows by Hensel's lemma that there exists $z_{1}\in\ZZ_{p}$ such that $T(z_{1})=0$ and $\overline{z}_{1}=z'$. Hence $\ord_p(f(x)-z_1) > 0$, and therefore $z_1 \notin V_{f,p}$. On the other hand, $R_{p}$ has also only simple roots in $\overline{\FF}_{p}$ and $z'\in Z(R_{p})$, so, by Hensel's lemma, there exists $z_{2}\in \cO_{p}$ such that $R(z_{2})=0$ and $\overline{z}_{2}=z'$. From the facts that $z_{1}$ and $z_{2}$ are both roots of $T$, $\overline{z}_1 = z'= \overline{z}_2$ and the conditions (1) and (2) are true for $Z(T)$, it follows that $z_{1}=z_{2}$. Hence $z_{1}\in Z(R) = V_f$, and we knew already that $z_{1}\in\ZZ_{p}$ so $z_{1}\in V_{f,p}$. This contradiction proves that condition (3) also holds.
\end{proof}

\begin{proof}[Proof of the Main Theorem \ref{**}]
Let $N, d$ be as in Lemma \ref{20} and write $V_f = \{z_1, \ldots, z_d\}$. We fix $p>N$, then we can assume that $V_{f,p}=\{z_{1},\ldots,z_{r}\}$ with $r\leq d$. For each $1\leq i\leq r$, we put $\Phi_{i,p}:=\11_{\{x\in\ZZ_{p}^{n} \mid \ord_{p}(f(x)-z_{i})>0\}}: \QQ_p^n \to \CC$. Because $f\in \ZZ[x_{1},\ldots,x_{n}]$ and by Lemma \ref{20} we see that $\Phi_{i,p}$ is residual, for all $1\leq i\leq r$, and that $\Supp(\Phi_{i,p}) \cap \Supp(\Phi_{j,p}) = \emptyset$, if $i \neq j$. We denote $\Phi_{0,p} := \11_{\ZZ_{p}^n}- \sum_{i=1}^{r}\Phi_{i,p}$, then $\Phi_{0,p}$ will also be residual. Now we have 
\begin{align*}
E_{p}(z,f)&=\int_{\ZZ_{p}^{n}}\Psi(zf(x))|dx|\\
&=\sum_{i=0}^{r}\int_{\ZZ_{p}^{n}}\Phi_{i,p}(x)\Psi(zf(x))|dx|\\
&=\sum_{i=1}^{r}\int_{\ZZ_{p}^{n}}\Phi_{i,p}(x)\Psi(z(f(x)-z_{i}))\exp(2\pi izz_{i})|dx|+ E_{\Phi_{0,p}}(z,p,f)\\
&=\sum_{i=1}^{r}\exp(2\pi izz_{i})E_{\Phi_{i,p}}(z,p,f_{i})+E_{\Phi_{0,p}}(z,p,f),
\end{align*}
where $f_{i}(x)=f(x)-z_{i}$ for $1\leq i\leq r$.

 If $1\leq i\leq r$, we note that $C_{\overline{f}_{i}}\cap\Supp(\overline{\Phi}_{i,p})\subset \overline{f}_{i}^{-1}(0)$. So we can use Theorems \ref{05} and \ref{06} for $f_i$. According to Remark \ref{rem: adapt proof}, the Main Theorem \ref{*} is still true for the exponential sum $E_{\Phi_{i,p}}(z,p,f_{i})$, where we take $\sigma_{i}=\min\big\{c(f_{i}),\frac{1}{2}\big\}$. In the proofs from Section \ref{sec: geometry} we need to replace $c_{I,\chi}^{0}$ by $c_{I,\Phi_{i,p},\chi}$ and $\overset{\circ}{\overline{E}}_{I}\cap\overline{h}^{-1}(0)$ by $\overset{\circ}{\overline{E}}_{I}\cap\overline{h}_{i}^{-1}(Z(\overline{f}_i))$, with $h_i: Y_i \to \QQ(z_{i})^n$ an embedded resolution for $Z(f_i)$. For each $1\leq i \leq r$, there exist a constant $C_i$ and a natural number $N_i >N$, only depending on the critical value $z_i \in V_f$ and the chosen resolution $h_i$ of $f_i$, such that, if $p> N_i$ and $z_i \in V_{f,p}$, then we have
\[
\lvert E_{\Phi_{i,p}}(z,p,f_{i}) \rvert \leq C_i m^{n-1}p^{-m\sigma_i}.
\]
We remark that by definition of $\Phi_{0,p}$ and by condition (3) from Lemma \ref{20}, we have $C_{\overline{f}}\cap\Supp(\overline{\Phi}_{0,p})=\emptyset$, for all $p >N$, and thus it is well known that $E_{\Phi_{0,p}}(z,p,f)=0$, for $|z|>p$ (see \cite{09}, Remark 4.5.3).\\

We recall that $a(f)$ is the minimum, over all $b \in \CC$, of the log-canonical thresholds of the polynomials $f(x)-b$. Therefore, if we set $\sigma=\min\big\{a(f),\frac{1}{2}\big\}$, then $\sigma \leq \underset{1\leq i\leq d}{\min}\sigma_{i}$, hence there exist a constant $C>\underset{1\leq i\leq d}{\max}C_i$ and a natural number $N'>\underset{1\leq i\leq d}{\max}N_i$, such that for all $p>N'$ and $m\geq 2$, we have 
\[
|E_{m,p}|\leq Cm^{n-1}p^{-m\sigma}.\qedhere
\]
\end{proof}

\section{The uniform version of the Main Theorem \ref{*}}\label{sec: uniform version}

In this section $f \in \ZZ[x_1, \ldots, x_n]$ is a nonconstant polynomial. We will describe how to adapt the Sections \ref{sec: model theory} and \ref{sec: geometry} to obtain a constant $C$ and a natural number $N$, such that for all $y \in \ZZ^n$ and for all $m \geq 1$, $p >N$, we have
\begin{equation}\label{eq: uniform result}
\lvert E^y_{m,p}(f)\rvert := \Big\lvert\frac{1}{p^{mn}} \sum_{\overline{x} \in \overline{y}+(p\ZZ/p^m\ZZ)^n} \exp\Big(\frac{2\pi i f(x)}{p^m}\Big) \Big\rvert \leq Cm^{n-1}p^{-m\sigma_{y,p}}.
\end{equation}
Here we take $\sigma_{y,p} = \min\{a_{y,p}(f), \frac{1}{2}\}$. We recall that $a_{y,p}(f)$ is the minimum of the log canoncial thresholds at $y'$ of the polynomials $f(x)-f(y')$, where $y'$ runs over $y+(p\ZZ_p)^n$. Notice that the case $m=1$ is covered by Remark \ref{rem: m=1}. Hence we can assume that $m \geq 2$.\\

Let $V_{f} = \{z_1, \ldots, z_d\} \subset \overline{\QQ}$ be the set of critical values of $f$ in $\overline{\QQ}$, where $d$ is as in Lemma \ref{20}. For each $1 \leq j \leq d$ we put $f_j(x):=f(x)-z_j$ and we fix an embedded resolution $(Y_j, h_j)$ of $f_j^{-1}(0)$ over $\QQ(z_{i})$. Let $N'_j$ be a natural number, such that for all $p>N'_j$, $(Y_j, h_j)$ has good reduction modulo $p$. Furthermore, let $N'_0$ be a natural number, such that for all $p>N'_0$, we have $V_{f,p}=V_{f} \cap \QQ_p \subset \ZZ_p$, any two distinct points $z, z'$ in $V_{f,p} $ satisfy $\ord_p(z-z') =0$ and if $x \in \ZZ_p^n$ such that $\ord_p(f(x)-z)=0$ for all $z \in V_{f,p}$, then $x$, resp.\ $\overline{x}$, is a regular point of $f$, resp.\ $\overline{f}$ (see Lemma \ref{20}). We put $N' := \underset{0\leq i\leq d}{\max}N'_i$  and for each $p>N'$ we consider a partition of $\ZZ^n = \bigcup_{j=0}^d A_{j,p} \cup \bigcup_{j=1}^d B_{j,p}$, where
\begin{align*}
A_{j,p} &:= \{y \in \ZZ^n \mid \ord_p(f_j(y)) >0 \text{ and $f$ has a critical point in } y+(p\ZZ_p)^n\},\\
B_{j,p} &:= \{y \in \ZZ^n \mid \ord_p(f_j(y)) >0 \text{ and $f$ has no critical points in } y+(p\ZZ_p)^n\},
\end{align*}
for $1 \leq j \leq d$, and
\[
A_{0,p} := \ZZ^n \backslash \bigcup_{j=1}^d (A_{j,p} \cup B_{j,p}). 
\]
First of all, for $p>N'$, we observe that if $y \in A_{0,p}$, then $\ord_p(f(y)-z_j) \leq 0$ for all $1 \leq j \leq d$. In particular, $\ord_p(f(y)-z_j) = 0$ for all $z_j \in V_f \cap \ZZ_p = V_{f,p}$. So $\overline{y}$ is a regular point of $\overline{f}$, by Lemma \ref{20}, hence
the condition $C_{\overline{f}} \cap \text{Supp}(\overline{\Phi}_{y,p}) = \emptyset$, with $\Phi_{y,p} := \11_{y+(p\ZZ_p)^n}$, is satisfied. Thus, by Remark 4.5.3 from \cite{09}, we get that $E^y_{m,p}(f) = 0$, for all $m \geq 2$, $p>N'$ and $y \in A_{0,p}$.\\

Secondly, if $1 \leq j \leq d$, $p>N'$, and $y \in B_{j,p}$, then $f_j$ has no critical points in $y+(p\ZZ_p)^n$. So by 1.4.1 from \cite{09}, we have $E^y_{m,p}(f_j) = 0$, for $m$ large enough. Using Corollary 1.4.5 from \cite{09}, we see that $(p^{s+1}-1)Z_{\Phi_{y,p}}(p, \chi_{\text{triv}}, s, f_j)$ and $Z_{\Phi_{y,p}}(p,\chi, s, f_j)$, for $\chi \neq \chi_{\text{triv}}$, cannot have any poles. Because the resolution $(Y_j,h_j)$ of $f_j$ has good reduction modulo $p$, for $p>N'$, and $C_{\overline{f}_j} \cap \text{Supp}(\overline{\Phi}_{y,p}) \subset \overline{f}_j^{-1}(0)$, for $y \in B_{j,p}$, the Theorem \ref{06} applies. By combining it with Proposition \ref{04}, we get that for all $p>N'$ and $y \in B_{j,p}$, the sum $E^y_{m,p}(f_j)$ equals
\begin{align}\label{total expression}
Z_{\Phi_{y,p}}(p, \chi_{\text{triv}}, 0, f_j) &+ \text{Coeff}_{t^{m-1}} \Big(\frac{(t-p)Z_{\Phi_{y,p}}(p, \chi_{\text{triv}}, s, f_j)}{(p-1)(1-t)}\Big)\\
&+ \sum_{\substack{\chi \neq \chi_{\text{triv}},\\ c(\chi)=1}} g_{\chi^{-1}} \chi(u) \text{Coeff}_{t^{m-1}}(Z_{\Phi_{y,p}}(p, \chi, s, f_j)).\nonumber
\end{align}
Since $Z_{\Phi_{y,p}}(p,\chi, s, f_j)$ does not have any poles for $\chi \neq \chi_{\text{triv}}$, we can see that, for $m$ big enough, $\text{Coeff}_{t^{m-1}}(Z_{\Phi_{y,p}}(p, \chi, s, f_j))$ will not depend on $m$. Also the total expression \ref{total expression} is independent of $m$, for $m$ big enough (because it is equal to 0). Therefore the part $\text{Coeff}_{t^{m-1}} \Big(\frac{(t-p)Z_{\Phi_{y,p}}(p, \chi_{\text{triv}}, s, f_j)}{(p-1)(1-t)}\Big)$ must be independent of $m$ as well, for $m$ big enough. This can only be the case if $\frac{(t-p)Z_{\Phi_{y,p}}(p,\chi_{\text{triv}},s,f_j)}{(p-1)(1-t)}$, as a function in $t$, has at most two poles, one pole at $t=1$ of order $1$ and  one pole at $t=0$. However, the explicit formula of $Z_{\Phi_{y,p}}(p,\chi_{\text{triv}},s,f_j)$ implies that it can not have poles at $t=0$. So $\frac{(t-p)Z_{\Phi_{y,p}}(p,\chi_{\text{triv}},s,f_j)}{(p-1)(1-t)}$ has at most one pole, and this pole (if it exists) must be of order $1$ at $t=1$.

According to 4.1.1 from \cite{09}, the degree of $Z_{\Phi_p}(p, \chi, s, f_j) \leq 0$ (as a rational function in $t$), for all $p>N'$ and all charachters $\chi$ with conductor $c(\chi)=1$. This implies that $\frac{(t-p)Z_{\Phi_{y,p}}(p,\chi_{\text{triv}},s,f_j)}{(p-1)(1-t)}$ is of the form $c + \frac{d}{1-t}$, for certain $c, d \in \CC$, and that $Z_{\Phi_{y,p}}(p, \chi, s, f_j)$ is equal to a constant function, for $\chi \neq \chi_{\text{triv}}$. Now we can easily see that for all $m\geq 2$, $\text{Coeff}_{t^{m-1}}\Big(\frac{(t-p)Z_{\Phi_{y,p}}(p,\chi_{\text{triv}},s,f_j)}{(p-1)(1-t)}\Big)$ and $\text{Coeff}_{t^{m-1}}(Z_{\Phi_{y,p}}(p, \chi, s, f_j))$, for $\chi \neq \chi_{\text{triv}}$, are indepent of $m$.
We conclude that $E^y_{m,p}(f_j) = 0$, for all $m\geq 2$, $p>N'$ and $y \in B_{j,p}$.\\

The last case is the one where $y \in A_{j,p}$, for $1 \leq j \leq d$. We will show that in this case there exists a constant $C_j$ and a natural number $N_j$ (only depending on $j$, not on $y$), such that for all $p>N_j$, $m\geq 2$ and $y \in A_{j,p}$, we have
\begin{equation}\label{eq: adapt formula}
|E^y_{m,p}(f_j)| \leq C_j m^{n-1}p^{-m\sigma_{y,p}}.
\end{equation}
By taking $N:= \max\{N', N_1, \ldots, N_d\}$ and $C:=\max\{C_1, \ldots, C_d\}$ (both independent of $y$), the formula \ref{eq: uniform result} will hold for all $y \in \ZZ^n$, $p >N$ and $m \geq 1$. In what follows, we will show how to adapt the proofs of both Sections \ref{sec: model theory} and \ref{sec: geometry}, to obtain the formula \ref{eq: adapt formula}.

\subsection{Adapting Section \ref{sec: geometry}}\label{sec: adaption 1}
If we want to be able to use the method of proof that was outlined in Section \ref{sec: geometry}, then we need to show the following result, for all $j>0$, $y \in A_{j,p}$ and $p>N'$:
\begin{equation}\label{eq: log canonical threshold reducing mod p}
a_{y,p}(f) = \min_{E : \overline{y}\in\overline{h}_j(\overline{E}(\FF_{p}))}\big\{\frac{\nu}{N}\big\},
\end{equation}
where $E$ is an irreducible component of $h_j^{-1}(Z(f_j))$ with numerical data $(N, \nu)$. When we compare this to the Formula \ref{eq: lct reduced mod p}, we see that, by replacing $c_0(f)$ by $a_{y,p}(f)$, we can adapt the results of Section \ref{sec: geometry} to $f_j$ with $\Phi_{p} = \11_{y+(p\ZZ_p)^n}$. Indeed the condition $C_{\overline{f}_j} \cap \text{Supp}(\overline{\Phi}_p) \subset \overline{f}_j^{-1}(0)$ is satisfied. This proves the Formula \ref{eq: adapt formula} and by Remark \ref{rem: adapt proof} we know that the constant $C_j$ and the natural number $N_j$ only depend on $f_j$ and the chosen resolution $(Y_j, h_j)$.

All that is left, is to prove Equation \ref{eq: log canonical threshold reducing mod p} for $y \in A_{j,p}$ and $j>0$. We remark that if $y' \in y+(p\ZZ_p)^n$ is not a critical point of $f$, then $c_{y'}(f(x)-f(y')) = 1$. If $y' \in y+(p\ZZ_p)^n$ is a critical point of $f$, then we know by Lemma \ref{20} that $f(y') = z_j$, hence $f_j(y') = f(y') - f(y') = 0$. Since $(Y_j, h_j)$ has good reduction modulo $p$, for $p>N'$, we know that, after possibly enlarging $N'$ as we did for \ref{eq: lct reduced mod p}, we have
\[
c_{y'}(f(x)-f(y')) = c_{y'}(f_j) = \min_{E:\overline{y'}\in\overline{h}_j(\overline{E}(\FF_{p}))}\big\{\frac{\nu}{N}\big\} = \min_{E:\overline{y}\in\overline{h}_j(\overline{E}(\FF_{p}))}\big\{\frac{\nu}{N}\big\} \leq 1.
\]
If $y \in A_{j,p}$, then $y+(p\ZZ_p)^n$ contains at least one critical point of $f$, in which case Equation \ref{eq: log canonical threshold reducing mod p} holds. 


\subsection{Adapting Section \ref{sec: model theory}}
For $j>0$ and $y \in A_{j,p}$, we will split the exponential sum $E^y_{m,p}(f_j)$ into three subsums in exactly the same way as in Section \ref{sec: model theory}. In each of the Lemmas \ref{14}, \ref{16}, \ref{17} and in the proof of the Main Theoreom \ref{*} from Section \ref{sec: model theory} we need to make some changes.

\begin{lem}\label{lem 1 new version}
Let $f \in \ZZ[x_1, \ldots, x_n]$ be a nonconstant polynomial and let $z_j \in V_f$ be a critical value of $f$. There exists a natural number $N_0 > N'$, such that for all $m \geq 1$, for all $p>N_0$ and for all $y \in A_{j,p}$, we have
\[
\sum_{\substack{\overline{x}\in \overline{y}+(p\ZZ/p^{m}\ZZ)^{n},\\ \ord_{p}(f_j(x))\leq m-2}}\exp\left(\frac{2\pi if_j(x)}{p^{m}}\right)=0.
\]
Remark that if $A_{j,p}\neq\emptyset$, then $z_{j}\in V_{f,p}\subset\ZZ_{p}$, so the term $\exp\left(\dfrac{2\pi if_j(x)}{p^{m}}\right)$ is well-defined.
\end{lem}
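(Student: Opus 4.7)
My plan would be to follow the structure of the proof of Lemma \ref{14} essentially verbatim, with $y$ promoted to an additional parameter in the valued field sort. The cases $m=1,2$ give empty sums, so I assume $m>2$. Working in the language $\cL_\cO$ with $\cO := \ZZ[z_j]$ (so that by Lemma \ref{20} the symbol $z_j$ is interpretable in $\ZZ_p$ for all $p>N'$), I would introduce the formula
\[
\phi(x,y,z,m) = \bigwedge_{i=1}^n \ord(x_i-y_i)\geq 1 \ \wedge\ \bigwedge_{i=1}^n \ord(y_i)\geq 0 \ \wedge\ \ord(z)\leq m-2 \ \wedge\ \ord(z-f_j(x))\geq m,
\]
let $X\subset h[2n+1,0,1]$ be the subassignment it defines, and set $G := \pi_!(\mathbf{1}_X)\in I_{h[0,0,1]}(h[n+1,0,1])$, where $\pi$ projects onto $(z,y,m)$. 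Because $\phi$ uses only the valuation $\ord$ on the valued field sort, the specialization $G(z,y,m,p)$ is independent of the choice of uniformizer $\varpi_p$ and equals $p^{-mn}\#\{\overline{x}\in\overline{y}+(p\ZZ/p^m\ZZ)^n : f_j(x)\equiv z \bmod p^m\}$ when $\ord_p(z)\leq m-2$ and $0$ otherwise.

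Applying cell decomposition (Corollary \ref{cell}) to $G$ in the variable $z$ with $(y,m)$ and the usual $(\eta,\gamma)$ as parameters, I obtain cells with centers $c_i(\eta,\gamma,y,m)$. By quantifier elimination (Corollary \ref{QE}), the graph of each $c_i$ is cut out by polynomial equations involving finitely many $f_l(z,y)\in\ZZ[[t]][z_j][z,y]$ of total degree bounded independently of $p,m,y$. The local-constancy argument from Lemma \ref{14} then transports directly: splitting into the three cases according to whether $c_i(\eta,\gamma,y,m)$ lies in the analogue $B_m^y$, has order $\leq 0$, or order $\geq m-1$, and using in the last case that $y\in A_{j,p}$ together with $f_j(y+(p\ZZ_p)^n)\subset p\ZZ_p$ forces $G(\cdot,y,m,p)$ to vanish on $B(0,m-1)$, I would conclude that $G(\cdot,y,m,p)$ is constant on each ball $B(z,m-1)$ with $z\in U_m^y$ and $1\leq\ord_p(z)\leq m-2$.

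The main obstacle is then to show, uniformly in $y$ and $m$, that $\mathcal{V}^y_{m,p}:=\QQ_p\setminus\bigcup_{\varpi_p}U_m^{y,\varpi_p}$ is empty for $p$ large. As in Lemma \ref{14}, $\overline{\mathcal{V}}^y_{m,p}:=\ac_{\varpi_p}(\mathcal{V}^y_{m,p})$ is $\varpi_p$-independent with cardinality bounded by $\sum_l \deg_z f_l$ (a quantity independent of $y$), and varying $\varpi_p$ forces $\overline{\mathcal{V}}^y_{m,p}$ to be stable under the orbits $\{u^{-\ord_p(z)}\bar{z}:u\in\FF_p^\times\}$ of sizes $(p-1)/\gcd(\ord_p(z),p-1)$. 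For these orbits to exceed the cardinality bound when $p\gg 0$, I need a bound $\ord_p(z)\leq M$, valid for all $z$ in the finite set $B_\infty^y:=\bigcup_l Z(f_l(\cdot,y))\cap\{z:1\leq\ord(z)<\infty\}$, uniformly in $y\in\ZZ^n$. Securing this uniform bound is the essential difficulty, since the specialized polynomials $f_l(\cdot,y)(\varpi_p)\in\ZZ_p[z]$ have $p$-adic valuations of coefficients that depend on $y$; I would attack it by combining a Newton polygon analysis with a logical compactness argument exploiting that $B_\infty^y$ is a first-order finite set cut out by polynomials of fixed $z$-degree over $\ZZ[[t]][z_j][y]$, so that any would-be sequence of counterexamples $(y^{(k)},p_k)$ with $\ord_{p_k}(z^{(k)})\to\infty$ contradicts the boundedness of the defining data.

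Once the uniform bound is secured, the conclusion is identical to that of Lemma \ref{14}: for $p>N_0$ and $m>2$, $G(\cdot,y,m,p)$ is constant on each ball of radius $m-1$ inside $\{z:1\leq\ord_p(z)\leq m-2\}$, and summing $\exp(2\pi iz/p^m)$ over such a ball (a coset of $p^{m-1}\ZZ_p/p^m\ZZ_p$) vanishes. The desired vanishing of the truncated exponential sum follows.
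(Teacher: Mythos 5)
Your overall plan — translate the argument of Lemma \ref{14} with $y$ as an additional parameter — is the right instinct, and you correctly identify the crux: obtaining a bound on the $p$-adic valuations of the elements of $B_\infty^y$ that is \emph{uniform in $y$}. But the way you set up the parameter makes that bound false in general, and the compactness/Newton-polygon repair you sketch does not close the gap.

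The trouble is that you put $y$ into the valued-field sort. Cell decomposition in the variable $z$ then has a base $C_i$ containing $y$ as a VF parameter, and after quantifier elimination the centers $c_i$ are described by polynomials $f_l(z,y)$ in \emph{both} $z$ and $y$, with coefficients in $\ZZ[z_j][[t]]$. The zeros of $z\mapsto f_l(z,y)$ genuinely depend on $y$, and nothing forces their $p$-adic valuations to stay bounded as $y$ ranges over $\ZZ^n$: for instance a center of the form $z=f_j(y)$ has $\ord_p(f_j(y))$ as large as you like for suitable $y$ (indeed $y\in A_{j,p}$ means $\ord_p(f_j(y))>0$, and it can grow without bound). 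Your proposed compactness argument runs against this because there is no fixed finite configuration to saturate against: the "defining data'' of $B_\infty^y$ changes with $y$, and a sequence $(y^{(k)},p_k)$ with $\ord_{p_k}(z^{(k)})\to\infty$ is entirely consistent with the polynomials having bounded $z$-degree. So the claimed uniform $M$ need not exist with your choice of formula, and the orbit-size inequality $\#\{u^{-\ord_p(z)}\ac_p(z):u\in\FF_p^\times\}\geq (p-1)/M$ is not available.

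The paper avoids this entirely by exploiting your own observation that $G$ only depends on $y$ through $\overline{y}=(y\bmod p)$, and therefore \emph{records $\overline{y}$ in the residue-field sort}: it uses the formula $\bigwedge_i(\overline{x}_i=\xi_i)\wedge(\ord(z-z_j)\leq m-2)\wedge(\ord(z-f(x))\geq m)$ with residue-field variables $\xi=(\xi_1,\dots,\xi_n)$ playing the role of $\overline{y}$. The projection then lands in $h[1,n,1]$, cell decomposition is done in $z$ over a base with only RF and VG parameters, and quantifier elimination (Corollary \ref{QE} in $\cL_{\ZZ}\cup\{z_j\}$) produces polynomials $g_l\in(\ZZ[z_j][[t]])[z]$ in the single VF variable $z$, with no $y$-dependence whatsoever. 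The finite zero set $\bigcup_l Z(g_l)$ is fixed once and for all, so the valuation bound $M$ is automatic and uniform, and the rest of Lemma \ref{14} carries over verbatim (with $\ord(z)$ replaced by $\ord(z-z_j)$). To repair your proposal you should drop the VF parameter $y$, add RF variables $\xi_i$ with the constraint $\overline{x}_i=\xi_i$, and rerun your own argument; the uniformity problem then disappears rather than needing to be solved.
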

To prove this lemma, we adapt the proof of Lemma \ref{14} as follows. We replace the formula $\phi$ by
\begin{align*}
&\phi_j(x_1, \ldots, x_n, z, \xi_1, \ldots, \xi_n, m) =\\
&\bigwedge_{i=1}^n (\overline{x}_i=\xi_i) \wedge (\ord(z - z_j) \leq m-2) \wedge (\ord(z -f(x_1, \ldots, x_n)) \geq m),
\end{align*}
where $x_i, z$ are in the valued field-sort, $\xi_i$ are in the residu field-sort and $m$ is in the value group-sort. This is an $\cL_{\ZZ} \cup \{z_j\}$-formula, with $z_j$ a constant symbol in the valued field-sort. We remark that the function $\cO_K \to k_K: x \mapsto \overline{x} = (x \mod \cM_K)$ is definable in $\cL_{\text{DP}}$.

Now $\phi_j$ induces a definable subassignment $X_j \subset h[n+1, n, 1]$ and constructible functions $F_j := \11_{X_j}$ and $G_j := \pi_{!}(F_j)$, where $\pi: h[n+1, n, 1] \to h[1, n,1]$ is the projection onto the last $n+2$ coordinates. For each prime $p$, for each uniformiser $\varpi_p$ of $\QQ_p$ and for each $y \in A_{i,p}$, we have the following interpretation of $G_j$ in $\QQ_p$:
\[
G_{j, \varpi_p}(z, \overline{y}, m) = \#\{\overline{x}^{(m)} \in \overline{y}^{(m)}+(p\ZZ/p^m\ZZ)^n \mid f(x) \equiv z \mod p^m\},
\]
if $\ord_p(z - z_j) \leq m-2$, and
\[
G_{j,\varpi_p}(z, \overline{y}, m) = 0,
\]
if $\ord_p(z-z_j) \geq m-1$. Here the notation $\overline{x}^{(m)}$ means the class of $(x \mod p^m)$. Note however that $G_{j,\varpi_p}$ actually only depends on $(y \mod p)$, i.e., on $\overline{y}$. We remark that if $A_{j,p} \neq \emptyset$, then $z_j \in \QQ_p$, which makes it possible to interprete $\ord(z-z_j)$ (and other formulas that contain the symbol $z_j$) in $\QQ_p$.
We apply Corollary \ref{cell} to $G_j$ to obtain a cell decomposition where the centers $c_i$ are given by $\cL_{\ZZ}\cup\{z_j\}$-formulas $\theta_j(z,\xi, \eta, \gamma, m)$. By elimination of quantifiers, $\theta_i$ is equivalent to the formula
\[
\bigvee_{k}\Big(\zeta_{ik}\big(\ac g_1(z), \ldots, \ac g_s(z), \xi, \eta\big) \wedge \nu_{ik}\big(\ord g_1(z), \ldots, g_s(z), \gamma, m\big)\Big),
\]
where $\zeta_{ik}$ is an $\cL_{\text{ring}}$-formula and $\nu_{ik}$ an $\cL_{\text{oag}}$-formula, and $g_1, \ldots, g_s \in (\ZZ[z_{j}][[t]])[z]$. The rest of the proof of Lemma \ref{14} still applies if we replace $\ord(z)$ by $\ord(z-z_j)$ everywhere. By going over the proof, we can see that the natural number $N_0$ that is obtained in the proof, only depends on $j$.

\begin{lem}
Let $f \in \ZZ[x_1, \ldots, x_n]$ be a nonconstant polynomial and let $z_j \in V_f$ be a critical value of $f$. There exists, for each integer $m >1$, a natural number $N_m > N'$ and a positive constant $D_m$, such that for all $p >N_m$ and for all $y \in A_{j,p}$, we have
\[
\Big\lvert\sum_{\substack{\overline{x}\in \overline{y}+(p\ZZ/p^{m}\ZZ)^{n},\\ \ord_{p}(f_j(x)) = m-1}} p^{-mn}\exp\left(\frac{2\pi if_j(x)}{p^{m}}\right)\Big\rvert\leq D_m p^{-m \sigma_{y,p}}.
\]
\end{lem}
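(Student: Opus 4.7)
The plan is to adapt the proof of Lemma \ref{16} along the same lines as Lemma \ref{lem 1 new version} adapts Lemma \ref{14}, by introducing $\overline{y}$ as a residue-field parameter in the defining formulas. Introduce two $\cL_\ZZ \cup \{z_j\}$-formulas
\begin{align*}
\phi_j(x,z,\xi,m) &= \bigwedge_{i=1}^n(\overline{x}_i=\xi_i)\wedge(\ord(z-z_j)=m-1)\wedge(\ord(z-f(x))\geq m),\\
\overline{\phi}_j(x,\eta,\xi,m) &= \bigwedge_{i=1}^n(\overline{x}_i=\xi_i)\wedge(\ord(f_j(x))=m-1)\wedge(\ac(f_j(x))=\eta),
\end{align*}
with $x=(x_1,\ldots,x_n),z$ in the valued-field sort, $\xi=(\xi_1,\ldots,\xi_n),\eta$ in the residue-field sort and $m$ in the value-group sort. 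These induce definable subassignments $X_j,\overline{X}_j$ and constructible functions $G_j:=\pi_{!}(\11_{X_j})\in I_{h[0,n,1]}(h[1,n,1])$ and $\overline{G}_j:=\overline{\pi}_{!}(\11_{\overline{X}_j})\in I_{h[0,n,1]}(h[0,n+1,1])$ whose interpretation in $\QQ_p$ is, for $y\in A_{j,p}$, given by $G_{j,\varpi_p}(z,\overline{y},m)=p^{-mn}\#\{\overline{x}\in\overline{y}+(p\ZZ/p^m\ZZ)^n\mid f(x)\equiv z\bmod p^m\}$ when $\ord_p(z-z_j)=m-1$ (and $0$ otherwise), and analogously for $\overline{G}_{j,\varpi_p}$. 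Moreover $G_{j,\varpi_p}(z,\overline{y},m)=\overline{G}_{j,\varpi_p}(\ac_{\varpi_p}(z-z_j),\overline{y},m)$ whenever $\ord_p(z-z_j)=m-1$.

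The next step is to apply Denef-Pas elimination of quantifiers (Corollary \ref{QE}) to $\overline{G}_j$ to write it as a $\ZZ$-linear combination of terms $\alpha(\eta,\overline{y},m)\LL^{\beta(\eta,\overline{y},m)}[V]$ whose interpretation is visibly independent of the uniformiser $\varpi_p$ for all $p>N_0$, with $N_0$ depending only on $j$. This independence transfers to $G_j$ via the identity above, and the $\mu_{p-1}(\QQ_p)$-orbit argument from the proof of Lemma \ref{16} then shows that $G_j(\cdot,\overline{y},m,p)$ is constant on each of the $d:=\gcd(m-1,p-1)$ sets
\[
Y_i := \big\{z\in\ZZ_p \,\big|\, \ord_p(z-z_j)=m-1,\ \ac_p((z-z_j)/z_{0,i})^{(p-1)/d}=1\big\},
\]
for suitably chosen representatives $z_{0,i}$. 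Decomposing the target sum along the $Y_i$ and pulling out the $x$-independent factor $\exp(2\pi i z_j/p^m)$, Weil's estimate $\lvert\sum_{u\in\FF_p^\times}\exp(2\pi i u^d \xi_0/p)\rvert\leq dp^{1/2}$ yields
\[
\Big\lvert\sum_{\substack{\overline{x}\in \overline{y}+(p\ZZ/p^m\ZZ)^n,\\ \ord_p(f_j(x))=m-1}} p^{-mn}\exp\Big(\frac{2\pi if_j(x)}{p^m}\Big)\Big\rvert \leq \frac{d^2 p^{1/2}}{p-1}\, p^{-mn}\,\#A^y_{p,m},
\]
where $A^y_{p,m}:=\{\overline{x}\in \overline{y}+(p\ZZ/p^m\ZZ)^n \mid \ord_p(f_j(x))=m-1\}$.

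Finally, bound $\#A^y_{p,m}$ via Lang-Weil combined with Theorem \ref{02}. The fibers of the projection $\pi_m: A^y_{p,m}\to\pi_m(A^y_{p,m})\subset(\ZZ_p/p^{m-1}\ZZ_p)^n$ have size at most $p^n$, so $\dim_{\FF_p}A^y_{p,m}\leq n+\dim_{\FF_p}\pi_m(A^y_{p,m})$. By Lemma \ref{20}, every critical point $y'\in y+(p\ZZ_p)^n$ of $f$ satisfies $f(y')=z_j$, so $\pi_m(A^y_{p,m})$ is contained in the union, over such $y'$, of the shifted truncated contact loci $y'+\pi_{m-1}(\text{Cont}_0^{\geq m-1}(f_j(\,\cdot+y')))$. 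Applying Theorem \ref{02} at each such $y'$ gives $\dim_{\FF_p}\pi_m(A^y_{p,m})\leq(m-1)n-(m-1)c_{y'}(f_j)$ for the minimizing $y'$, and by the identification $a_{y,p}(f)=\min_{y'}c_{y'}(f_j)$ (coming from Lemma \ref{20} and the discussion in Section \ref{sec: adaption 1}), Lang-Weil yields $\#A^y_{p,m}\leq C'p^{mn-(m-1)a_{y,p}(f)}$ with $C'$ independent of $y$ and $p$. Substituting into the inequality above and using $\sigma_{y,p}=\min\{a_{y,p}(f),1/2\}$ produces the desired bound $D_m p^{-m\sigma_{y,p}}$.

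The main obstacle is this last step: because $a_{y,p}(f)$ is defined as a minimum over the finitely many critical points of $f$ in the ball $y+(p\ZZ_p)^n$ rather than as the log-canonical threshold at a single point, one must decompose $A^y_{p,m}$ according to the closest critical point and apply Theorem \ref{02} locally around each, then take the minimum. By contrast, uniformity of $N_m$ and $D_m$ in $y$ is automatic, since all $\overline{y}$-dependence enters only through residue-field parameters in $\cL_\ZZ\cup\{z_j\}$-formulas, so the quantifier-elimination bounds are insensitive to $y$.
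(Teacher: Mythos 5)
Your proposal follows the same overall strategy as the paper's adaptation of Lemma~\ref{16}: replace $\phi$, $\overline{\phi}$ by $\cL_\ZZ\cup\{z_j\}$-formulas carrying $\overline{y}$ as residue-field parameters, run the $\mu_{p-1}(\QQ_p)$-orbit and Weil-estimate argument to reduce to counting $\#A^y_{p,m}$, and then bound that count via Lang--Weil together with the jet-space characterisation of the log-canonical threshold. The structural parts of your write-up match the paper closely. However, the two things you flag in your closing paragraph are, so to speak, inverted with respect to the paper.

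First, the ``decompose according to the closest critical point'' step you identify as the main obstacle is not actually needed, and as stated it introduces a difficulty. Since any critical point $y'\in y+(p\ZZ_p)^n$ satisfies $\overline{y'}=\overline{y}$, every $x\in y+(p\ZZ_p)^n$ already satisfies $x-y'\in(p\ZZ_p)^n$; hence $\pi_m(A^y_{p,m})$ sits inside the shifted contact locus centred at \emph{each} individual $y'$, not merely in a union. Taking $y'$ to be a critical point realising $a_{y,p}(f)=c_{y'}(f_j)$ (which exists by Section~\ref{sec: adaption 1}) gives the codimension bound directly; the union over the (possibly infinitely many) critical points in the ball is unnecessary and would in any case spoil the constants. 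In fact the paper's observation in Section~\ref{sec: adaption 1} that $c_{y'}(f_j)$ depends only on $\overline{y'}=\overline{y}$ shows there is only one value in play, so there is genuinely no ``minimum over $y'$'' to decompose along.

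Second, the step you dismiss as ``automatic'' is precisely the one the paper singles out as requiring care. You assert ``$\#A^y_{p,m}\leq C'p^{mn-(m-1)a_{y,p}(f)}$ with $C'$ independent of $y$ and $p$'' and then argue that uniformity in $y$ follows because $\overline{y}$-dependence only enters through residue-field parameters in definable formulas, so quantifier-elimination bounds are insensitive to $y$. But the constant in a Lang--Weil estimate is a geometric quantity (degree, number of top-dimensional components), not a quantifier-elimination bound; having a single formula with a parameter does not by itself control this. What is actually needed, and what the paper supplies, is the observation that, because the truncated jet schemes of $Z(f_j)$ form a constructible family over $Z(f_j)$, the $\FF_p$-varieties $\tilde{A}_{p,m,y}$ as $y$ and $p$ vary take on only finitely many isomorphism types $Z_1^{(m)},\ldots,Z_{k_m}^{(m)}$; one may then take $D'_{m,j}$ to be the maximum of the Lang--Weil constants over this finite list. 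Without some such finiteness argument (or an explicit appeal to a uniform Lang--Weil theorem for definable families), the $y$-uniformity of $D_m$ is unjustified, and that is the genuine gap in the proposal.
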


To prove this lemma, we adapt the proof of Lemma \ref{16} as follows. We replace the formulas $\phi$ and $\overline{\phi}$ by
\begin{align*}
&\phi_j(x_1, \ldots, x_n, z, \xi_1, \ldots, \xi_n, m) =\\
&\bigwedge_{i=1}^n (\overline{x}_i=\xi_i) \wedge (\ord(z - z_j) =m-1) \wedge (\ord(z - f(x_1, \ldots, x_n)) \geq m),\\
&\overline{\phi}_j(x_1, \ldots, x_n, \xi, \xi_1, \ldots, \xi_n, m) =\\
&\bigwedge_{i=1}^n (\overline{x}_i=\xi_i) \wedge (\ord(f(x_1, \ldots, x_n)-z_j)=m-1) \wedge \ac(f(x_1, \ldots, x_n)-z_{j}) = \xi),
\end{align*}
where $x_i, z$ are in the valued field-sort, $\xi_i, \xi$ are in the residue field-sort and $m$ is in the value group-sort. These are also $\cL_{\ZZ} \cup \{z_j\}$-formulas. Most of the other modifications in the proof of Lemma \ref{16} are the same as we discussed above for Lemma \ref{lem 1 new version}.

The only moment that we have to be more careful, is when estimating $\#\{\overline{x}^{(m)}\in \overline{y}^{(m)} + (p\ZZ_p/p^m\ZZ_p)^n \mid \ord_p(f_j(x)) = m-1\}$. From Section \ref{sec: adaption 1} we know that if $p>N'$ and if $y \in A_{j,p}$, then there exists $y' \in y+(p\ZZ_p)^n$, such that $a_{y,p}(f) = c_{y'}(f_j)$. By Corollary 3.6 from \cite{15}, we have
\[
a_{y,p}(f) = c_{y'}(f_j) \leq \frac{(m-1)n - \dim_{\FF_p}(\tilde{A}_{p,m,y})}{m-1},
\]
where $A_{p,m,y} := \{\overline{x}^{(m)} \in \overline{y}^{(m)} + (p\ZZ_p/p^m\ZZ_p)^n \mid \ord_p(f_j(x)) = m-1\}$, viewed as a subvariety of $\FF_p^{mn}$, and where $\tilde{A}_{p,m,y}$ is the image of $A_{p,m,y}$ under the projection $\pi_m : (\ZZ_p/p^m\ZZ_p)^n \to (\ZZ_p/p^{m-1}\ZZ_p)^n$, viewed as a subvariety of $\FF_p^{mn-n}$. Then $\#A_{p,m,y} \leq \#\tilde{A}_{p,m,y} \cdot p^n$. By the Lang-Weil estimate, there exists a constant $D'_{m,y}$, not depending on $p$, such that
\[
\#\tilde{A}_{p,m,y} = D'_{m,y}p^{\dim_{\FF_p}(\tilde{A}_{p,m,y})} + O(p^{\dim_{\FF_p}(\tilde{A}_{p,m,y})-\frac{1}{2}}).
\]
By looking at the arcspace of $Z(f_j)$, we can see that, for each $m$, there are finitely many schemes $Z_1^{(m)}, \ldots, Z_{k_m}^{(m)}$, such that for all $p$ and $y$, $\tilde{A}_{p,m,y}\cong Z_i^{(m)}(\FF_p)$ for some $i \in \{1, \ldots, k_m\}$. This means that the constant $D'_{m,y}$, which we know already to be independent of $p$, only depends on the set of schemes $\{Z_1^{(m)}, \ldots, Z_{k_m}^{(m)}\}$. Hence there exists a constant $D'_{m,j}$, such that $D'_{m,j} \geq D'_{m,y}$ for all $y \in A_{j,p}$. By going over the rest of the proof of Lemma \ref{16}, we can see that the natural number $N_m$ and the constant $D_m$, that are obtained in the proof, only depend on $m$ and $j$.\\

We need to make similar adjustments in the proof of Lemma \ref{17}, to obtain the following lemma.

\begin{lem}
Let $f \in \ZZ[x_1, \ldots, x_n]$ be a nonconstant polynomial and let $z_j \in V_f$ be a critical value of $f$. There exists, for each integer $m >1$, a natural number $N_m > N'$ and a positive constant $D_m$, such that for all $p >N_m$ and for all $y \in A_{j,p}$, we have
\[
\Big\lvert\sum_{\substack{\overline{x}\in \overline{y}+(p\ZZ/p^{m}\ZZ)^{n},\\ \ord_{p}(f_j(x))\geq m}}p^{-mn}\exp\left(\frac{2\pi if_j(x)}{p^{m}}\right)\Big\rvert\leq D_m p^{-m \sigma_{y,p}}.
\]
\end{lem}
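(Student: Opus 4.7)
The plan is to mimic the proof of Lemma \ref{17}, incorporating the same two kinds of adjustments as in the preceding lemma: replace $f$ by $f_j$, and replace the counting set by its translate by $y$. The first key observation is that whenever $\ord_p(f_j(x))\geq m$, one has $\exp(2\pi i f_j(x)/p^m)=1$, so the left-hand side of the inequality collapses to a point count:
\[
\Big\lvert\sum_{\substack{\overline{x}\in \overline{y}+(p\ZZ/p^{m}\ZZ)^{n},\\ \ord_{p}(f_j(x))\geq m}}p^{-mn}\exp\Big(\frac{2\pi if_j(x)}{p^{m}}\Big)\Big\rvert = p^{-mn}\#B_{p,m,y,j},
\]
where $B_{p,m,y,j}:=\{\overline{x}^{(m)}\in\overline{y}^{(m)}+(p\ZZ_p/p^m\ZZ_p)^n \mid \ord_p(f_j(x))\geq m\}$, viewed as a subvariety of $\FF_p^{mn}$.

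Next I would control the dimension of $B_{p,m,y,j}$ via the arc-space characterisation of the log-canonical threshold. By the discussion in Section \ref{sec: adaption 1}, every $y\in A_{j,p}$ has a critical point $y'\in y+(p\ZZ_p)^n$ of $f$ satisfying $a_{y,p}(f)=c_{y'}(f_j)$. Applying Theorem \ref{02} to the polynomial $f_j(y'+\cdot)$, whose zero set contains $0$, gives
\[
a_{y,p}(f)=c_{y'}(f_j)\leq \frac{mn-\dim_{\FF_p}B_{p,m,y,j}}{m},
\]
equivalently $\dim_{\FF_p}B_{p,m,y,j}\leq mn - m\,a_{y,p}(f)$. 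Combined with the Lang--Weil estimate \cite{Lawe}, this yields, for each individual $y$, a constant $D'_{m,y}$ (independent of $p$) such that $\#B_{p,m,y,j}\leq 2D'_{m,y}\,p^{mn-m\,a_{y,p}(f)}$ for $p$ large enough.

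The main obstacle, as in the adaptation of Lemma \ref{16}, is to make the Lang--Weil constant uniform in $y\in A_{j,p}$. I would handle this exactly as there: by inspecting the truncated arc space of $Z(f_j)$, which is a finite-type object, one sees that only finitely many schemes $Z_1^{(m)},\ldots,Z_{k_m}^{(m)}$ (up to isomorphism) can arise with $B_{p,m,y,j}\cong Z_i^{(m)}(\FF_p)$, as $p$ ranges over primes and $y$ ranges over $A_{j,p}$. Consequently a single constant $D_m$ (depending only on $m$ and $j$, not on $p$ or $y$) dominates every $D'_{m,y}$.

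Putting the pieces together, I choose $N_m>N'$ large enough that the Lang--Weil error term can be absorbed into $D_m$. Then for all $p>N_m$ and all $y\in A_{j,p}$,
\[
p^{-mn}\#B_{p,m,y,j}\leq D_m\,p^{-m\,a_{y,p}(f)}\leq D_m\,p^{-m\sigma_{y,p}},
\]
using $\sigma_{y,p}=\min\{a_{y,p}(f),1/2\}\leq a_{y,p}(f)$. This gives the desired bound and completes the proof.
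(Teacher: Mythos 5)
Your proof is correct and follows exactly the approach the paper intends: reduce the sum to a point count (since the exponential is constantly $1$ on the set), bound the dimension via the arc-space characterisation of $c_{y'}(f_j)=a_{y,p}(f)$ from Theorem \ref{02}, apply Lang--Weil, and make the constant uniform in $y$ by the finiteness of isomorphism classes of truncated arc schemes. This is precisely the "similar adjustments" to Lemma \ref{17} that the paper alludes to, so there is nothing to add.
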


The final step after these three lemmas, is to modify the proof of the Main Theorem \ref{*} at the end of Section \ref{sec: model theory}. According to Corollary \ref{07} and its proof, there exist natural numbers $s_j, M_{j}, N''_j$, such that for all $p>N''_j$, $m > M_j$ and $y \in A_{j,p}$, we have
\begin{equation}\label{corollary 2.6 uniform in y}
E^y_{m,p}(f_j) = \sum_{i=1}^{s_j} a_{i,p,y} m^{\beta_{ij}} p^{-\lambda_{ij}m} \11_{A_{ij}}(m).
\end{equation}
We can easily see that $\beta_{ij}$, $\lambda_{ij}$ and $A_{ij}$ only depend on $f_j$ and not on $y$. 
By going through the proof of Claim \ref{claim} we obtain a constant $C_0$ and natural numbers $\tilde{M}, \tilde{N}$ (that depend on $\beta_{ij}$, $\lambda_{ij}$ and $A_{ij}$, but not on $a_{i,p,y}$), such that for all $m>\tilde{M}$, $p>\tilde{N}$, $y \in A_{j,p}$ and $1 \leq i \leq s_j$, we have
\[
|a_{i,p,y}p^{-\lambda_{ij}m}| \leq C_0p^{-\sigma_{y,p}m}.
\]
Now \ref{eq: adapt formula} follows easily.

\begin {thebibliography}{99}
%
\bibitem{00} R. Cluckers, {\it  Igusa and Denef-Sperber conjectures on nondegenerate $p$-adic exponential sums}, Duke Math. J. {\bf 141} (1), 205--216 (2008).

%
\bibitem{05} R. Cluckers, F. Loeser, {\it Ax-Kochen-Ersov Theorems for $p$-adic integrals and motivic integration}, in Geometric methods in algebra and number theory, Prog. Math. {\bf 235}, 109--137, Birkh\"auser, Boston, MA (2005).

\bibitem{02} R. Cluckers, F. Loeser, {\it Constructible motivic functions and motivic integration}, Inventiones Mathematicae {\bf 173} (1), 23--121 (2008).

%
%
\bibitem{06} R. Cluckers, W. Veys, {\it Bounds for $p$-adic exponential sums and log-canonical thresholds}, Amer. J. Math. {\bf 138} (1), 61--80 (2016).

%
\bibitem{08} J. Denef, {\it Local Zeta Functions and Euler Characteristics}, Duke Math. J. {\bf 63} (3), 713-721 (1991).

\bibitem{Den} J. Denef, {\it On the degree of Igusa's local zeta function}, Amer. J. Math. {\bf 109} (6), 991-1008 (1987).

\bibitem{09} J. Denef, {\it Report on Igusa's local zeta function}, S\'eminaire Bourbaki {\bf 1990/91} Exp. no. 741, 359--386 (1991).

%
\bibitem{11} J. Denef, S. Sperber, {\it Exponential sums mod $p^{n}$ and Newton polyhedra}, Bulletin Belg. Math. Soc.--Simon Stevin {\bf suppl.}, 55-63 (2001).

\bibitem{12} J. I. Igusa, {\it Complex powers and asymptotic expansions I}, J. reine angew. Math. {\bf 268/269}, 110-130 (1974); Ibid. {\it II}, {\bf 278/279}, 307--321 (1975).

\bibitem{13} J. I. Igusa, {\it Lectures on forms of higher degree (notes by {S}. {R}aghavan)}, Tata Institute of Fundamental Research, Lectures on Math. and Phys. {\bf 59}, Springer-Verlag, Heidelberg-New York-Berlin (1978).
 
\bibitem{Lawe} S. Lang, A. Weil, {\it Number of points of varieties in finite fields}, Amer. J. Math. {\bf 76} (4), 819--827 (1954).

%
\bibitem{Lich} B. Lichtin, {\it On a conjecture of Igusa}, Mathematica {\bf 59} (2), 399--425 (2013).

\bibitem{15} M. Musta\c{t}\v{a}, {\it  Singularities of pairs via jet schemes}, J. Amer. Math. Soc. {\bf 15} (3), 599--615 (2002).
 
%
%
\bibitem{18} J. Pas, {\it Uniform $p$-adic cell decomposition and local zeta functions}, J. reine angew. Math. {\bf 399}, 137--172 (1989).

%
%
\bibitem{Shim} G. Shimura, {\it Reduction of algebraic varieties with respect to a discrete valuation of the basis field}, Amer. J. Math. {\bf 77} (1), 134--176 (1955).

\bibitem{Weil} A. Weil, {\it On some exponential sums}, Proc. N. A. S. {\bf 34} (5), 204--207 (1948).

\bibitem{Wr} J. Wright, {\it Exponential sums and polynomial congruences in two variables: the quasi-homogeneous case}, arXiv:1202.2686

\end {thebibliography}
\end{document}